\documentclass[12pt]{amsart}
\usepackage{amssymb}
\usepackage{braket}
\usepackage{mathrsfs}
\usepackage[initials]{amsrefs}
\usepackage{ifthen}
\usepackage[all]{xy}
\usepackage{tikz}
\usepackage{comment}

\theoremstyle{plain}
\newtheorem{theo}{Theorem}[section]
\newtheorem{prop}[theo]{Proposition}
\newtheorem{lem}[theo]{Lemma}
\newtheorem{cor}[theo]{Corollary}
\theoremstyle{definition}
\newtheorem{defi}[theo]{Definition}

\newtheorem{rem}[theo]{Remark}

\newtheorem{nota}[theo]{Notation}

\newcommand{\Z}{\mathbb{Z}}
\newcommand{\N}{\mathbb{N}}
\newcommand{\Q}{\mathbb{Q}}
\newcommand{\R}{\mathbb{R}}
\newcommand{\C}{\mathbb{C}}
\newcommand{\KK}{\mathbb{K}}
\newcommand{\OR}{\mathcal{O}}
\newcommand{\DUAL}{\boldmath{D}}
\newcommand{\PR}{\mathbb{P}}
\newcommand{\Pn}{\mathbb{P}^n}
\newcommand{\Poin}{\C[x_1,\dots,x_n]}
\newcommand{\Pozn}{\C[x_0,\dots,x_n]}
\newcommand{\TR}{\mathbb{C^{*}}}
\newcommand{\IC}{\mathrm{IC}}
\newcommand{\VP}{\mathrm{P}}

\newcommand{\MHM}{\mathop{\mathrm{MHM}}}

\newcommand{\wt}[1]{\widetilde{#1}}
\newcommand{\ICV}{{\mathrm{IC}}_{V}}
\newcommand{\ICVH}{{\mathrm{IC}^{H}_{V}}}
\newcommand{\ICVT}{\widetilde{\mathrm{IC}_{V}}}
\newcommand{\ICVTS}{(\widetilde{\mathrm{IC}_{V}})_0}
\newcommand{\ICX}{\mathrm{IC}_{X}}
\newcommand{\ICXH}{{\mathrm{IC}^{H}_{X}}}
\newcommand{\ICXT}{\widetilde{\mathrm{IC}_{X}}}

\newcommand{\RGC}{\mathrm{R}\Gamma_{c}}
\newcommand{\RG}{\mathrm{R}\Gamma}
\newcommand{\DER}{\mathrm{R}}

\DeclareMathOperator{\MOD}{Mod}
\DeclareMathOperator{\SHM}{SHM}
\DeclareMathOperator{\INT}{int}
\DeclareMathOperator{\PERV}{Perv}
\DeclareMathOperator{\RAT}{rat}

\DeclareMathOperator{\Conv}{Conv}
\DeclareMathOperator{\trun}{trun}
\DeclareMathOperator{\supp}{supp}
\newcommand{\REG}{\mathrm{reg}}
\newcommand{\LIM}{\mathrm{lim}}
\newcommand{\DEL}{\mathrm{Del}}

\newcommand{\point}{\mathrm{pt}}
\DeclareMathOperator{\NP}{NP}
\DeclareMathOperator{\DCB}{\mathrm{D}^{\mathrm{b}}}
\DeclareMathOperator{\DCBC}{\mathrm{D}^{\mathrm{b}}_{\mathrm{c}}}
\newcommand{\GR}{\mathrm{gr}}

\AtBeginDocument{%
	\def\MR#1{}
}

\title[On the MHS of IC stalks of complex hypersurfaces]{\Large{O\lowercase{n the mixed} H\lowercase{odge structures of the intersection cohomology stalks of complex hypersurfaces}}}

\author[Takahiro SAITO]{\large{Takahiro SAITO}}

\address{Institute of Mathematics, University of
Tsukuba, 1-1-1, Tennodai,
Tsukuba, Ibaraki, 305-8571, Japan.}
\email{takahiro@math.tsukuba.ac.jp}

\subjclass[2010]{32C38, 32S35, 32S40, 32S55, 32S60}

\date{\today}

\begin{document}

\maketitle
\begin{abstract}
We consider a hypersurface in $\C^n$ with an isolated singular point at the origin,
and study the mixed Hodge structure of the stalk of its intersection cohomology complex at the origin.
In particular we express the dimension of each graded piece of the weight filtration of this mixed Hodge structure in terms of the numbers of the Jordan blocks in the Milnor monodromy.
\end{abstract}

\tableofcontents

\section{Introduction} 
In this paper, we reveal a new relationship between the mixed Hodge structures of the stalks of the intersection cohomology complexes (we call them IC stalks for short)
and the Milnor monodromies,
by using the results on motivic Milnor fibers shown by Matsui-Takeuchi~\cite{MT}
and on motivic nearby fibers by Stapledon~\cite{SFM}.

For a natural number $n\geq 2$, let $f\in\Poin$ be a non-constant polynomial of $n$ variables with coefficients in $\C$ such that $f(0)=0$.
Assume that $f$ is convenient and non-degenerate at $0$ (see Definitions~\ref{convenient2} and \ref{nondegatzero}). 
We denote by $V$ the hypersurface $\{x\in\C^n\ |\ f(x)=0\}$ in $\C^n$.
Then, it is well-known that $0\in V$ is a smooth or isolated singular point of $V$.
We denote by $\ICV:=\ICV(\Q)$ the intersection cohomology complex of $V$ with rational coefficients.
This is the underlying perverse sheaf of the mixed Hodge module $\ICVH$.
By Morihiko Saito's theory, the stalk $(\ICV)_0$ of $\ICV$ at $0$ is a complex of mixed Hodge structures.
However, to the best of our knowledge, the mixed Hodge structure of $(\ICV)_{0}$ has not been fully studied yet.
For a complex $C$ of mixed Hodge structures and integers $r,k\in\Z$,
we denote by $\GR^{W}_{r}H^{k}(C)$ the $r$-th graded piece of $H^{k}(C)$ with respect to the weight filtration $W_{\bullet}$.
For an integer $w\in\Z$, we say that $C$ has a pure weight $w$ if $\GR^{W}_{r}H^{k}(C)=0$ for any $r,k\in\Z$ with $r\neq k+w$. 
The purity of the weights of IC stalks are important.
Kazhdan-Lustzig computed the Kazhdan-Lustzig polynomials by using the purity of IC stalks of Schubert varieties in flag varieties
in \cite{KL}.
Denef-Loeser proved that if $f$ is quasi-homogeneous, $(\ICV)_0[-(n-1)] (=:(\ICVT)_0)$ has a pure weight $0$ in \cite{DL} (see~Proposition~\ref{dlpure}).
By using this result, 
they computed the dimensions of the intersection cohomology groups of complete toric varieties.

In general, $(\ICVT)_{0}$ has mixed weights $\leq 0$, that is $\GR^{W}_{r}H^{k}((\ICVT)_{0})=0$ for $r>k$.
In this paper, we will describe the dimensions of $\GR^{W}_{r}H^{k}((\ICVT)_{0})$ very explicitly.
We denote by $N_{0}$ the dimension of the invariant subspace of the $(n-1)$-st Milnor monodromy $\Phi_{0,n-1}$ of $f$ at $0$.
Assuming that $n\geq 3$, for any $k\in\Z$, we have
\renewcommand{\arraystretch}{1.5}
\begin{align*}
\dim H^{k}(\ICVTS)=
\left\{
\begin{array}{ll}
1&\text{ if \ }k=0,\\
N_{0}&\text{ if \ }k=n-2, \text{ and}\\
0&\ \text{otherwise}
\end{array}
\right.
\end{align*}
(see Proposition~\ref{icstalkjigen}).\renewcommand{\arraystretch}{1}
Thus, 
if $H^{n-2}(\ICVTS)$ does not have a pure weight,
the dimension $N_{0}$ is decomposed into those of the graded pieces ${\GR^{W}_{r}H^{n-2}((\ICVT)_{0})}$.
We shall describe $\dim{\GR^{W}_{r}H^{n-2}((\ICVT)_{0})}$ in terms of the numbers of the Jordan blocks for the eigenvalue $1$ in the $(n-1)$-st Milnor monodromy $\Phi_{0,n-1}$.
For a natural number $s\in\Z_{\geq 0}$, we denote by $J^{1}_s$ the number of the Jordan blocks in $\Phi_{0,n-1}$ for the eigenvalue 1 with size $s$.
Then our main result is the following.

\begin{theo}[Theorem~\ref{mainth}]\label{intromain}
Assume that $n\geq 3$ and $f$ is convenient and non-degenerate at $0$.
Then for any $r\in\Z$, we have\renewcommand{\arraystretch}{1.5} 
\begin{align*}
\dim \GR^{W}_{r}H^{0}((\ICVT)_0) =
\left\{
\begin{array}{ll}
1 & \text{ if \ } r=0, \\
0 & \text{ if \ } r\neq 0, 
\end{array}
\right.
\end{align*}\renewcommand{\arraystretch}{1}
and 
\[\dim \GR^{W}_{r}H^{n-2}((\ICVT)_0) = J^{1}_{n-r-1}.\]

\end{theo}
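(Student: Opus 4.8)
The plan is first to isolate the genuine content of the statement. By Proposition~\ref{icstalkjigen} the complex $(\ICVT)_0$ has cohomology only in degrees $0$ and $n-2$, with $\dim H^{0}((\ICVT)_0)=1$ and $\dim H^{n-2}((\ICVT)_0)=N_0$; moreover $H^{0}((\ICVT)_0)\cong\Q$ is forced to be the pure Hodge structure $\Q(0)$, since it is the stalk at $0$ of the constant cohomology sheaf of $\mathrm{IC}_V$ in degree $-(n-1)$, namely $\Q_V$. This yields the first displayed equality, and reduces the theorem to computing the weight filtration on the single mixed Hodge structure $M:=H^{n-2}((\ICVT)_0)$ (equivalently $H^{-1}$ of the complex of mixed Hodge structures $(\ICV)_0$), which has dimension $N_0$.

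Next I would realise $M$ as a Milnor monodromy invariant. Since $0$ is a smooth or isolated singular point, $\mathrm{IC}_V=\tau_{\leq n-2}\big(Rj_{*}\Q_{V^{\circ}}[n-1]\big)$ for $j\colon V^{\circ}=V\setminus\{0\}\hookrightarrow V$, and over the perverse heart this produces a short exact sequence of mixed Hodge modules
\[
0\ \longrightarrow\ L\ \longrightarrow\ \Q^{H}_{V}[n-1]\ \longrightarrow\ \ICVH\ \longrightarrow\ 0
\]
in which $L$ is supported at $0$; applying $(\,\cdot\,)_0$ and chasing the long exact sequence of stalk cohomology identifies $M\cong L$ as a mixed Hodge structure. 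To compute $L$ I would feed $\Q^{H}_{V}[n-1]$ into the vanishing-cycle triangle
\[
\Q^{H}_{V}[n-1]\ \xrightarrow{\ \mathrm{sp}\ }\ {}^{p}\psi^{H}_{f,1}\big(\Q^{H}_{\C^{n}}[n]\big)\ \xrightarrow{\ \mathrm{can}\ }\ {}^{p}\phi^{H}_{f,1}\big(\Q^{H}_{\C^{n}}[n]\big)\ \xrightarrow{\ +1\ }
\]
whose non-smooth terms live at $0$ and are governed by the unipotent part of the $(n-1)$-st Milnor fibre cohomology $H^{n-1}(F_{0}(f))$ together with the logarithm $N$ of the unipotent part of $\Phi_{0,n-1}$. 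The outcome I expect is an identification of mixed Hodge structures
\[
H^{n-2}((\ICVT)_0)\ \cong\ \ker\!\big(N\colon H^{n-1}(F_{0}(f))_{1}\to H^{n-1}(F_{0}(f))_{1}\big)(1),
\]
the Tate twist $(1)$ being dictated by the normalisations in the triangle. Here the results of Matsui--Takeuchi~\cite{MT} on the motivic Milnor fibre and of Stapledon~\cite{SFM} on the motivic nearby fibre are brought in: for $f$ convenient and non-degenerate they make the monodromy-equivariant limit mixed Hodge structure of $H^{n-1}(F_{0}(f))$, and hence all of the bookkeeping above, completely explicit in terms of the Newton polyhedron of $f$.

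It then remains to read off the weights. By the theorem of Steenbrink and M.~Saito (applicable because $\Q^{H}_{\C^{n}}[n]$ is pure), the weight filtration on the eigenvalue-$1$ part $H^{n-1}(F_{0}(f))_{1}$ is the monodromy weight filtration of $N$ centered at $n$; thus in each Jordan block of $N$ of size $s$ the invariant line lies in weight $n-s+1$, so $\ker N$ contributes a $J^{1}_{s}$-dimensional subspace in weight $n-s+1$. Applying the Tate twist $(1)$ moves this to weight $n-s-1$, and setting $r=n-s-1$ gives $\dim\GR^{W}_{r}H^{n-2}((\ICVT)_0)=J^{1}_{n-r-1}$, as claimed. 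As consistency checks: summing over $r$ recovers $\sum_{s}J^{1}_{s}=N_0=\dim H^{n-2}((\ICVT)_0)$, and when $f$ is quasi-homogeneous all blocks have size $1$, so $H^{n-2}((\ICVT)_0)$ is pure of weight $n-2$, recovering Proposition~\ref{dlpure}.

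I expect the main obstacle to lie in the middle step: constructing the short exact sequence above at the level of mixed Hodge modules rather than only for the underlying perverse sheaves, and pinning down the precise Tate twist in the identification of $M$ with $\ker N$ — in particular keeping careful track of the difference between the action of $N$ on the nearby-cycle mixed Hodge module ${}^{p}\psi^{H}_{f,1}$ and on its stalk at $0$, which is exactly the discrepancy responsible for the index $n-r-1$ rather than $n-r$. This is also the place where the explicit formulae of \cite{MT} and \cite{SFM} do the most work, since they let the identity be verified at the level of monodromy-equivariant Hodge--Deligne polynomials, where the weight filtration of $H^{n-2}((\ICVT)_0)$ is rigid because every Jordan block contributes positively to a single weight.
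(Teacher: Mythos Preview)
Your approach is correct and genuinely different from the paper's, and in fact more general.

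The paper's proof is \emph{global}. It first deforms $f$ (Proposition~\ref{subetedoukei}) to a polynomial whose projective closure $X\subset\mathbb{P}^n$ has $0$ as its only singular point, then compares $\VP(\RGC(X;\ICXT))$ with $\VP(\ICVTS)$ via the triangle $\RGC(X\setminus\{p\})\to\RGC(X)\to(\ICXT)_0$. The computation passes through the limit MHS on $H^{n-1}_c(f^{-1}(\epsilon))$, and this is precisely where Stapledon's result (Proposition~\ref{syuucyuu}) is essential: it says that for convenient non-degenerate $f$ the monodromy on $\GR^W_r H^{n-1}_c(f^{-1}(\epsilon))$ is trivial for $r\neq n-1$, which lets one separate Deligne's weight filtration from the relative monodromy filtration. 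The answer then drops out from the symmetry of $IH^*(X)$ versus the asymmetry of the eigenvalue-$1$ part of $H^{n-1}(F_0)$ (centred at $n$ rather than $n-1$). The non-degeneracy hypothesis is used twice: for Stapledon's result, and for the deformation invariance in Proposition~\ref{subetedoukei}.

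Your approach is \emph{local} and bypasses all of this. The identification $M\cong\ker(N\mid H^{n-1}(F_0)_1)(1)$ that you anticipate is indeed correct and can be established purely within the MHM formalism: since $\Q_V[n-1]$ and $\mathbb{D}(\Q_V[n-1])$ are both perverse (regular embedding), the $\mathrm{can}$- and $\mathrm{var}$-triangles for $\Q^H_{\C^n}[n]$ are short exact sequences of mixed Hodge modules; $\mathrm{var}$ is then injective with cokernel $\mathbb{D}(\Q_V[n-1])(-n)$, which has no punctual subobject because $\Q_V[n-1]$ has no punctual quotient. Hence every punctual subobject of $\Q_V[n-1](-1)\subset{}^p\psi_{f,1}(-1)$ lies in $\mathrm{im}(\mathrm{var})\cap\ker(\mathrm{can}(-1))=\mathrm{var}(\ker N_\phi)$, giving $i_*M(-1)\cong\ker N_\phi$ as mixed Hodge modules. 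Combined with Steenbrink's theorem that the weight filtration on $H^{n-1}(F_0)_1$ is the monodromy filtration centred at $n$, your weight count goes through verbatim.

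Two remarks. First, your argument actually proves the theorem for \emph{any} isolated hypersurface singularity --- the convenience and non-degeneracy hypotheses are not used anywhere, and your invocation of \cite{MT} and \cite{SFM} is unnecessary: those references compute the limit MHS explicitly from the Newton polyhedron, but you only need the abstract Steenbrink/Saito fact that the weight filtration on the unipotent vanishing cohomology is the monodromy filtration, which holds in general. Second, the ``obstacle'' you flag (getting the Tate twist right) is resolved exactly by tracking the twist in $\mathrm{var}\colon{}^p\phi_{f,1}\to{}^p\psi_{f,1}(-1)$; no case-by-case verification via Hodge--Deligne polynomials is needed.
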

For the case where $n=2$, see Theorem~\ref{n2case}.
In particular, we obtain the following result on the purity of the IC stalk $(\ICVT)_{0}$.

\begin{cor}[Corollary~\ref{mainco}]
In the situation of Theorem~\ref{intromain}, the following conditions are equivalent.
\begin{enumerate}
\item The IC stalk $\ICVTS$ has a pure weight $0$.
\item There is no Jordan block for the eigenvalue $1$ with size $>1$ of the $(n-1)$-st Milnor monodromy $\Phi_{0,n-1}$.
\end{enumerate}
\end{cor}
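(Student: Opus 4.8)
The plan is to deduce the corollary directly from Theorem~\ref{intromain} together with the vanishing statement of Proposition~\ref{icstalkjigen}; there is essentially no new computation to perform, only an unwinding of the definition of purity. By Proposition~\ref{icstalkjigen}, $H^{k}((\ICVT)_0)$ vanishes unless $k=0$ or $k=n-2$, and since $n\geq 3$ these two indices are distinct. Hence $\ICVTS$ has pure weight $0$ if and only if both $H^{0}((\ICVT)_0)$ and $H^{n-2}((\ICVT)_0)$ are pure of weight $0$, i.e.\ $\GR^{W}_{r}H^{0}((\ICVT)_0)=0$ for all $r\neq 0$ and $\GR^{W}_{r}H^{n-2}((\ICVT)_0)=0$ for all $r\neq n-2$.

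First, Theorem~\ref{intromain} gives $\dim\GR^{W}_{r}H^{0}((\ICVT)_0)=0$ for $r\neq 0$, so $H^{0}$ is always pure of weight $0$ and imposes no condition. Next, again by Theorem~\ref{intromain}, $\dim\GR^{W}_{r}H^{n-2}((\ICVT)_0)=J^{1}_{n-r-1}$, so $H^{n-2}$ is pure of weight $0$ if and only if $J^{1}_{n-r-1}=0$ for every $r\neq n-2$. Substituting $s=n-r-1$, this reads $J^{1}_{s}=0$ for every $s\neq 1$. Since $J^{1}_{s}=0$ automatically for $s\leq 0$, this is equivalent to $J^{1}_{s}=0$ for all $s\geq 2$, which is precisely the assertion that the $(n-1)$-st Milnor monodromy $\Phi_{0,n-1}$ has no Jordan block for the eigenvalue $1$ of size $>1$. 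This gives the equivalence of (i) and (ii).

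The only points that require a moment's care are that the two cohomological degrees $0$ and $n-2$ do not coincide (guaranteed by the standing hypothesis $n\geq 3$), so that the weight-$0$ purity of $H^{0}$ and the condition on $H^{n-2}$ can be checked independently, and that the sizes of Jordan blocks are positive integers, so that the condition ``$s\neq 1$'' indeed collapses to ``$s>1$''. Beyond these remarks the statement is immediate from Theorem~\ref{intromain}, which carries all the substance; I do not anticipate any genuine obstacle here.
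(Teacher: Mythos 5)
Your argument is correct and is exactly the deduction the paper intends: the paper's own proof of the equivalence (i)$\iff$(ii) consists of the single sentence ``follows from Theorem~\ref{mainth}'', and your write-up simply makes explicit the vanishing from Proposition~\ref{icstalkjigen}, the unwinding of the definition of pure weight $0$ in degrees $0$ and $n-2$, and the substitution $s=n-r-1$. No gaps.
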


Therefore, the result on the purity of the IC stalks for quasi-homogeneous polynomials by Denef-Loeser (Proposition~\ref{dlpure}) is a special case of our result.

Moreover, as a corollary of the above theorem, we obtain a result on the mixed Hodge structures of the cohomology groups of the link of the isolated singular point $0$ in $V$.
We denote by $L$ the link of $0$ in $V$, that is, the intersection of $V$ and a sufficiently small sphere centered at $0$. 
Then $L$ is a $(2n-3)$-dimensional orientable compact real manifold and each cohomology group $H^{k}(L;\Q)$ of $L$ has a canonical mixed Hodge structure.
For any $k\leq n-2$, the $k$-th cohomology group $H^{k}(L;\Q)$ is isomorphic (as mixed Hodge structures) to $H^{k}(\ICVTS)$.
Assuming that $n\geq 3$, by the Poincar\'{e} duality, we have
$H^{k}(L;\Q)=0$ for any $k\neq0,n-2,n-1,2n-3$.
Then we obtain the following result.
\renewcommand{\arraystretch}{1.5}
\begin{cor}[Corollary~\ref{corlink}]
In the situation of Theorem~\ref{intromain}, for any $r\in\Z$, we have
\begin{align*}
\dim{\GR^{W}_{r}H^{0}(L;\Q)}=\dim{\GR^{W}_{2(n-1)-r}H^{2n-3}(L;\Q)}=\left\{
\begin{array}{ll}
1&\text{ if \ }r=0,\\
0&\text{ if \ }r\neq 0,
\end{array}
\right.
\end{align*}
and
\[\dim{\GR^{W}_{r}H^{n-2}(L;\Q)}=\dim{\GR^{W}_{2(n-1)-r}H^{n-1}(L;\Q)}=J^{1}_{n-r-1}.\]
\end{cor}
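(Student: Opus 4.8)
The plan is to read off all of $H^{*}(L;\Q)$ from the complex $(\ICVT)_{0}$ by combining two standard tools: the localization triangle for the stratification $V=(V\setminus\{0\})\sqcup\{0\}$ and the self-duality, up to a Tate twist, of the intersection complex. Write $j\colon V\setminus\{0\}\hookrightarrow V$ and $i_{0}\colon\{0\}\hookrightarrow V$. Since $0$ is the only singular point of $V$, the variety $V\setminus\{0\}$ is smooth of dimension $n-1$, so $j^{*}\ICVT=\Q^{H}_{V\setminus\{0\}}$, and I denote by $\RG(L;\Q):=i_{0}^{*}\DER j_{*}\Q^{H}_{V\setminus\{0\}}$ the resulting complex of mixed Hodge structures; its cohomology $H^{k}(\RG(L;\Q))$ is the link cohomology $H^{k}(L;\Q)$ with its canonical mixed Hodge structure.

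Applying $i_{0}^{*}$ to the localization triangle $i_{0*}i_{0}^{!}\ICVT\to\ICVT\to\DER j_{*}j^{*}\ICVT\xrightarrow{+1}$ (and using $i_{0}^{*}i_{0*}\cong\mathrm{id}$) yields an exact triangle
\[i_{0}^{!}\ICVT\lto(\ICVT)_{0}\lto\RG(L;\Q)\xrightarrow{+1}\]
of complexes of mixed Hodge structures. On the other hand, from $\mathbb{D}_{V}\ICV=\ICV(n-1)$ and $\ICVT=\ICV[-(n-1)]$ one gets $\mathbb{D}_{V}\ICVT=\ICVT(n-1)[2(n-1)]$, so the relation $i_{0}^{!}=\mathbb{D}_{\point}\,i_{0}^{*}\,\mathbb{D}_{V}$ gives $i_{0}^{!}\ICVT=\mathbb{D}_{\point}\bigl((\ICVT)_{0}\bigr)(-(n-1))[-2(n-1)]$, and therefore
\[H^{j}\bigl(i_{0}^{!}\ICVT\bigr)\cong\bigl(H^{2(n-1)-j}((\ICVT)_{0})\bigr)^{\vee}(-(n-1))\qquad(j\in\Z)\]
as mixed Hodge structures.

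Next I would run the long exact cohomology sequence of the triangle. By Proposition~\ref{icstalkjigen} the complex $(\ICVT)_{0}$ has nonzero cohomology only in degrees $0$ and $n-2$, so by the last display $i_{0}^{!}\ICVT$ has nonzero cohomology only in degrees $n$ and $2(n-1)$. For $n\geq 3$ the integers $0,\ n-2,\ n,\ 2(n-1)$ are pairwise distinct, so the long exact sequence degenerates into isomorphisms of mixed Hodge structures
\[H^{0}(L;\Q)\cong H^{0}((\ICVT)_{0}),\qquad H^{n-2}(L;\Q)\cong H^{n-2}((\ICVT)_{0}),\]
\[H^{n-1}(L;\Q)\cong H^{n}\bigl(i_{0}^{!}\ICVT\bigr)\cong\bigl(H^{n-2}((\ICVT)_{0})\bigr)^{\vee}(-(n-1)),\]
\[H^{2n-3}(L;\Q)\cong H^{2(n-1)}\bigl(i_{0}^{!}\ICVT\bigr)\cong\bigl(H^{0}((\ICVT)_{0})\bigr)^{\vee}(-(n-1)),\]
together with $H^{k}(L;\Q)=0$ for every other $k$. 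The first two isomorphisms combined with Theorem~\ref{intromain} give the values of $\dim\GR^{W}_{r}H^{0}(L;\Q)$ and $\dim\GR^{W}_{r}H^{n-2}(L;\Q)$; for the other two groups, the relations $\dim\GR^{W}_{s}(M^{\vee})=\dim\GR^{W}_{-s}M$ and $\GR^{W}_{s}(M(m))=\GR^{W}_{s+2m}M$ turn the last two isomorphisms into $\dim\GR^{W}_{2(n-1)-r}H^{2n-3}(L;\Q)=\dim\GR^{W}_{r}H^{0}((\ICVT)_{0})$ and $\dim\GR^{W}_{2(n-1)-r}H^{n-1}(L;\Q)=\dim\GR^{W}_{r}H^{n-2}((\ICVT)_{0})$, so a final appeal to Theorem~\ref{intromain} computes all four graded dimensions, and the result is exactly the claimed statement.

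The step that needs the most care is this last bookkeeping: getting the Tate twist in $\mathbb{D}_{V}\ICVT$ exactly right (equivalently, recognising $H^{2n-3}(L;\Q)$ as the Tate structure $\Q(-(n-1))$), and checking the disjointness of the four degrees, which is precisely where $n\geq 3$ is used — for $n=2$ the degrees $0$ and $n-2$, and $n-1$ and $2n-3$, collide and the long exact sequence no longer splits, so that case has to be handled on its own (Theorem~\ref{n2case}). The facts that $i_{0}^{*}\DER j_{*}\Q^{H}_{V\setminus\{0\}}$ carries the canonical mixed Hodge structure on $H^{*}(L;\Q)$ and that the intersection complex is self-dual up to the twist $(n-1)$ are standard; alternatively one can skip the triangle entirely and simply invoke Poincar\'e duality for the mixed Hodge structures on the link of an isolated complex singularity, $H^{k}(L;\Q)^{\vee}\cong H^{2n-3-k}(L;\Q)(n-1)$, in degrees $n-1$ and $2n-3$.
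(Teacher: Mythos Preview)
Your argument is correct and essentially matches the paper's: both identify $H^{k}(L;\Q)\cong H^{k}((\ICVT)_{0})$ for $k\le n-2$ and then obtain the degrees $n-1$ and $2n-3$ by Poincar\'e duality for the link, finishing with Theorem~\ref{intromain}. The only difference is cosmetic: the paper simply quotes Durfee--Saito \cite[Proposition~3.3]{DurfeeSaito} for the duality $H^{k}(L;\Q)\simeq(H^{2n-3-k}(L;\Q)(n-1))^{*}$, whereas you re-derive that duality inline from the localization triangle together with the self-duality $\mathbb{D}_{V}\ICVT\cong\ICVT(n-1)[2(n-1)]$ --- an approach you yourself flag as equivalent in your final sentence.
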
\renewcommand{\arraystretch}{1}

We prove Theorem~\ref{intromain} by combining the results of Stapledon~\cite{SFM} with that of Matsui-Takeuchi~\cite{MT}.
For this purpose, we deform the hypersurface $V$ to a suitable one which can be compactified in $\Pn$ nicely.
See Section~\ref{sechenkei} for the details.

\subsection*{Acknowledgments}
The author would like to express his hearty gratitude to Professor Kiyoshi Takeuchi for drawing the author's attention to this problem and the several discussions for this work.
His thanks goes also to Yuichi Ike for many discussions and answering to his questions on sheaf theory,
to Tatsuki Kuwagaki for answering to his questions on algebraic varieties and toric varieties,
and to Tomohiro Asano for discussions on matters in Section~\ref{sechenkei}.

\section{Intersection cohomology}\label{ickaisetu}
In this section, we introduce some basic notations and recall intersection cohomology theory.  
We follow the notations of \cite{KS} and \cite{HTT} (see also \cite{DIMCA} and \cite{SCHURMANN}).
Let $X$ be an algebraic variety over $\C$ and $\KK$ a field.
We denote by $\KK_{X}$ the constant sheaf on $X$ with stalk $\KK$
and by $\DCB(\KK_{X})$ or $\DCB(X)$ the bounded derived category of sheaves of $\KK_{X}$-modules on $X$.
For $F \in \DCB(X)$ and an integer $d\in\Z$, we denote by $F[d]$ the shifted complex of $F$ by the degree $d$.  
Moreover, we denote by $\tau^{\geq d}F, \tau^{\leq d}F$ the truncated complexes of $F$.
We denote by $\DCBC(X)$ the full triangulated subcategory of $\DCB(X)$
consisting of complexes whose cohomology sheaves are constructible.
For a morphism $f\colon X\to Y$ of algebraic varieties,
one can define Grothendieck's six operations $\DER{f}_{*}$, $\DER{f}_{!}$, $f^{-1}$, $f^{!}$, $\overset{\mathrm{L}}{\otimes}$ and $\DER{\mathcal{H}om}$ as functors of derived categories of sheaves.
Let $g:X\to \C$ be a morphism of algebraic varieties.
Denote by $\wt{\TR}$ the universal covering of $\C^{*}$ and by $p\colon\wt{\TR}\simeq \{x\in \C\ |\ \mathrm{Im}(x)>0\}\to\C^{*}, x\mapsto \exp(2\pi\sqrt{-1}x)$ the covering map.
Consider the diagram:
\[
\xymatrix{
&  &(X\setminus g^{-1}(0))\times_{\TR}\wt{\TR}\ar[d]_{\pi} \ar[r]^{\qquad\quad{\pi}'} \ar@{}[rd]|\Box &\wt{\TR} \ar[d]^{p} \\
g^{-1}(0) \ar@{^{(}-{>}}[r]_{i} & X & X\setminus g^{-1}(0) \ar@{_{(}-{>}}[l]^{j\quad}  \ar[r]_{g} &\TR~ , \\  
}
\]
where the maps $i$ and $j$ are the inclusion maps and the square $\Box$ on the right is Cartesian.
For $F\in \DCBC(X)$, we define $\psi_{g}(F)\in \DCBC(g^{-1}(0))$ as $i^{-1}\DER(j\circ \pi)_{*}(j\circ \pi)^{-1}F\in \DCBC(g^{-1}(0))$
and $\phi_{g}(F)\in \DCBC(g^{-1}(0))$ as the mapping cone of the morphism $F|_{g^{-1}(0)}\to\psi_{g}(F)$.
Then we obtain the functors $\psi_{g} , \phi_{g}:\DCBC(X)\to \DCBC(g^{-1}(0))$.
We call $\psi_{g}$ and $\phi_{g}$ the \textit{nearby cycle} and the \textit{vanishing cycle functors}, respectively. 

In this paper, for a $\KK$-vector space $H$, we denote by $H^{*}$ the dual $\KK$-vector space of $H$.
If $X$ is a purely $n$-dimensional smooth variety,
there exist canonical isomorphisms
\[H^{k}(X;\KK) \simeq (H^{2n-k}_{c}(X;\KK))^{*}\]
for any $k\in\Z$ by the Poincar\'e duality.
However, if $X$ has some singular points, we can not expect such a nice symmetry in its cohomology groups $H^{k}(X;\KK)$.
Intersection cohomology theory was invented in order to overcome this problem.
Nowadays this theory is formulated in terms of perverse sheaves.

Let $X$ be an algebraic variety over $\C$ of pure dimension $n$.
We denote by $\PERV(X)(:=\PERV(\KK_{X}))$ the category of perverse sheaves over the field $\mathbb{K}$ on $X$.
This is a full abelian subcategory of $\DCBC(X)$.
Let $U$ be the regular part $X_{\REG}$ of $X$.
Then $\KK_{U}[n]$ is a perverse sheaf on $U$.
Let $j \colon U \hookrightarrow X$ be the inclusion map.
Then there exists a natural morphism $j_{!}\KK_{U}[n] \to \DER{j}_{*}\KK_{U}[n]$ in $\DCBC(X)$.
Taking the $0$-th perverse cohomology groups of both sides, we obtain a morphism  ${}^p j_{!}\KK_{U}[n]\to {}^{p}\DER{j}_{*}\KK_{U}[n]$ in $\PERV(X)$.

\begin{defi}
We define $\ICX(\KK)(=:\ICX)$ as the image of the morphism
\[{}^p j_{!}\KK_{U}[n]\to {}^{p}\DER{j}_{*}\KK_{U}[n]\]
in the abelian category $\PERV(X)$.
We call $\ICX$ the \textit{intersection cohomology complex} of $X$ with coefficients in $\KK$.
\end{defi}

\begin{defi}
We set
\begin{align*}
IH^{k}(X;\KK)&:=H^{k}(\RG(X;\ICX(\KK)[-n])), \text{ and}
\\IH^{k}_{c}(X;\KK)&:=H^{k}(\RGC(X;\ICX(\KK)[-n]))
\end{align*}
for any $k \in \Z$. 
We call $IH^{k}(X;\KK)$ (resp. $IH^{k}_{c}(X;\KK)$) the \textit{$k$-th intersection cohomology group} of $X$ (resp. the \textit{$k$-th intersection cohomology group with compact supports} of $X$).
\end{defi}

\begin{prop}
In the situation as above, for any $k\in\Z$, we have the generalized Poincar\'e duality isomorphism
\[IH^{k}(X;\KK)\simeq(IH^{2n-k}_{c}(X;\KK))^{*}.\]
\end{prop}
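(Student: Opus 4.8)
The plan is to reduce the statement to Verdier duality together with the self-duality of the intersection complex. Write $a_{X}\colon X\to\point$ for the structure morphism of $X$ and $\mathbb{D}_{X}$ for the Verdier duality functor on $\DCBC(X)$, computed with coefficients in the field $\KK$. Unwinding the definitions, one has $IH^{k}(X;\KK)=H^{k-n}(\RG(X;\ICX))$ and $IH^{2n-k}_{c}(X;\KK)=H^{n-k}(\RGC(X;\ICX))$. Since $\KK$ is a field, Verdier duality provides a natural isomorphism $\RGC(X;F)^{*}\simeq\RG(X;\mathbb{D}_{X}F)$ for every $F\in\DCBC(X)$, and hence $H^{j}(\RGC(X;F))^{*}\simeq H^{-j}(\RG(X;\mathbb{D}_{X}F))$ for all $j$. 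Applying this with $F=\ICX$ and $j=n-k$, it therefore suffices to establish the self-duality $\mathbb{D}_{X}\ICX\simeq\ICX$; granting this, I would conclude
\[
(IH^{2n-k}_{c}(X;\KK))^{*}\simeq H^{k-n}(\RG(X;\mathbb{D}_{X}\ICX))\simeq H^{k-n}(\RG(X;\ICX))=IH^{k}(X;\KK).
\]

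So the core of the argument is $\mathbb{D}_{X}\ICX\simeq\ICX$. First I would check it over the regular locus $U=X_{\REG}$: since $U$ is smooth of pure dimension $n$ one has $\mathbb{D}_{U}\KK_{U}\simeq\KK_{U}[2n]$, whence $\mathbb{D}_{U}(\KK_{U}[n])\simeq\KK_{U}[n]$, so $\KK_{U}[n]$ is a self-dual perverse sheaf on $U$. Next, because $\mathbb{D}_{X}$ is a contravariant exact functor on $\PERV(X)$ that interchanges ${}^{p}j_{!}$ with ${}^{p}\DER j_{*}$ and that, by the previous sentence, carries the canonical morphism ${}^{p}j_{!}\KK_{U}[n]\to{}^{p}\DER j_{*}\KK_{U}[n]$ back to the same morphism with its direction reversed, it sends the image of that morphism to itself. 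Since $\ICX$ is by definition that image, this gives $\mathbb{D}_{X}\ICX\simeq\ICX$. Alternatively, I could invoke the characterization of $\ICX$ as the unique perverse sheaf extending $\KK_{U}[n]$ with no nonzero subobject or quotient object supported on $X\setminus U$, a condition manifestly preserved by $\mathbb{D}_{X}$.

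The main obstacle is precisely this compatibility of the intermediate extension with Verdier duality — that $\mathbb{D}_{X}$ is exact and contravariant on $\PERV(X)$ and swaps ${}^{p}j_{!}$ with ${}^{p}\DER j_{*}$ — which is standard and can be quoted from the references recalled at the beginning of this section rather than reproved here. Once it is in hand, the remainder is the purely formal bookkeeping with Verdier duality over a field carried out above, requiring no further new input.
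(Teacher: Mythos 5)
Your argument is correct and is the standard one: since the paper states this proposition without proof (it is recalled as a known fact from the references cited at the start of Section 2), the Verdier-duality-plus-self-duality-of-$\ICX$ route you take is exactly the argument the paper implicitly relies on. The degree bookkeeping ($IH^{k}=H^{k-n}(\RG(X;\ICX))$, $IH^{2n-k}_{c}=H^{n-k}(\RGC(X;\ICX))$) and the deduction of $\mathbb{D}_{X}\ICX\simeq\ICX$ from the self-duality of $\KK_{U}[n]$ on the smooth locus together with the exchange of ${}^{p}j_{!}$ and ${}^{p}\DER{j}_{*}$ under duality are all sound.
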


We can express $\ICX$ more concretely as follows.
Take a Whitnety stratification $X=\bigsqcup_{\alpha\in A}X_{\alpha}$ of $X$
and set $X_{d}:=\bigsqcup_{\dim{X_{\alpha}}\leq d}X_{\alpha}$.
Then we have a sequence of closed subvarieties
\[X=X_{n}\supset X_{n-1}\supset \dots \supset X_{0} \supset X_{-1}=\emptyset\]
of $X$.
Set $U_{d}:=X\setminus X_{d-1}$.
Then we have a sequence of inclusion maps
\[U_{n}\overset{j_{n}}\hookrightarrow U_{n-1}\overset{j_{n-1}}\hookrightarrow\dots\overset{j_{2}}\hookrightarrow U_{1}\overset{j_{1}}\hookrightarrow U_{0}=X.\]

\begin{prop}
There is an isomorphism
\[\ICX\simeq(\tau^{\leq -1}{{\DER{j}_{1}}_{*}})\circ(\tau^{\leq -2}{{\DER{j}_{2}}_{*}})\circ \dots \circ(\tau^{\leq -n}{{\DER{j}_{n}}_{*}})(\KK_{U_{n}}[n])\]
in $\DCBC(X)$.
\end{prop}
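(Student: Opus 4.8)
The statement is the classical \emph{Deligne construction} of the intersection cohomology complex, and the plan is to verify directly that the iterated complex on the right satisfies the characterization of $\ICX$. Recall that, by definition, $\ICX=j_{!*}(\KK_{U}[n])$ for the open inclusion $j\colon U=X_{\REG}\hookrightarrow X$, and that this is the unique object $P\in\DCBC(X)$ with $j^{-1}P\simeq\KK_{U}[n]$ satisfying, on every stratum $S$ contained in $X\setminus U$, the (strict) middle-perversity support and cosupport conditions $\mathcal{H}^{k}(i_{S}^{-1}P)=0$ for $k\geq-\dim S$ and $\mathcal{H}^{k}(i_{S}^{!}P)=0$ for $k\leq-\dim S$. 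Since $\ICX$ is independent of the Whitney stratification, we may assume $U_{n}=X_{\REG}$. Write $F_{n}:=\KK_{U_{n}}[n]$ and, for $0\leq d\leq n-1$, $F_{d}:=\tau^{\leq-(d+1)}\DER{j_{d+1}}_{*}F_{d+1}\in\DCBC(U_{d})$, so that the right-hand side of the formula is $F_{0}$; here $i_{S}$ denotes the inclusion of a stratum $S$.

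I would prove by downward induction on $d$ that $F_{d}$ restricts to $\KK_{U_{n}}[n]$ on $U_{n}$ and satisfies the two conditions above on every non-open stratum of $U_{d}$; granting this for $d=0$ yields $F_{0}\simeq\ICX$. The base case $d=n$ is trivial, $U_{n}$ being smooth of pure dimension $n$. For the inductive step, note first that $j_{d+1}$ is an open immersion and $\tau^{\leq m}$ commutes with restriction to opens, so $F_{d}|_{U_{d+1}}=F_{d+1}$; hence $F_{d}|_{U_{n}}=\KK_{U_{n}}[n]$, and for a stratum $S\subset U_{d+1}$ the conditions for $F_{d}$ reduce to those for $F_{d+1}$ (for the cosupport one uses that $i_{S}$ factors through the open immersion $U_{d+1}\hookrightarrow U_{d}$, so that $i_{S}^{!}F_{d}\simeq i_{S}^{!}F_{d+1}$), and these hold by induction. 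It remains to treat the single new stratum $Z:=X_{d}\setminus X_{d-1}=U_{d}\setminus U_{d+1}$, which is empty or of pure dimension $d$.

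The support condition on $Z$ is immediate from the truncation: $\mathcal{H}^{k}(i_{Z}^{-1}F_{d})=\mathcal{H}^{k}(i_{Z}^{-1}\tau^{\leq-(d+1)}\DER{j_{d+1}}_{*}F_{d+1})=0$ for $k>-(d+1)$, i.e.\ for $k\geq-d=-\dim Z$. For the cosupport condition I would combine the attachment triangle
\[
i_{Z*}i_{Z}^{!}F_{d}\lto F_{d}\lto\DER{j_{d+1}}_{*}j_{d+1}^{-1}F_{d}=\DER{j_{d+1}}_{*}F_{d+1}\overset{+1}{\lto}
\]
with the canonical truncation triangle $\tau^{\leq-(d+1)}\DER{j_{d+1}}_{*}F_{d+1}\lto\DER{j_{d+1}}_{*}F_{d+1}\lto\tau^{\geq-d}\DER{j_{d+1}}_{*}F_{d+1}\overset{+1}{\lto}$. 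Once one knows that the middle arrow of the attachment triangle coincides, under $F_{d}=\tau^{\leq-(d+1)}\DER{j_{d+1}}_{*}F_{d+1}$, with the canonical truncation morphism, its fibre is $(\tau^{\geq-d}\DER{j_{d+1}}_{*}F_{d+1})[-1]$, and applying the $t$-exact functor $i_{Z}^{-1}$ gives $i_{Z}^{!}F_{d}\simeq(i_{Z}^{-1}\tau^{\geq-d}\DER{j_{d+1}}_{*}F_{d+1})[-1]$, which is concentrated in cohomological degrees $\geq-d+1$. In particular $\mathcal{H}^{k}(i_{Z}^{!}F_{d})=0$ for $k\leq-d=-\dim Z$, which is exactly the cosupport condition for a stratum of dimension $d$. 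This completes the induction, and hence the proof.

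The one point requiring care is the identification just used, namely that the adjunction unit $F_{d}\to\DER{j_{d+1}}_{*}j_{d+1}^{-1}F_{d}$ agrees with the canonical morphism $\tau^{\leq-(d+1)}\DER{j_{d+1}}_{*}F_{d+1}\to\DER{j_{d+1}}_{*}F_{d+1}$. This follows by naturality of the unit together with the triangle identities for the open immersion $j_{d+1}$ (so that the unit on $\DER{j_{d+1}}_{*}F_{d+1}$ is invertible) and the fact that $j_{d+1}^{-1}$ is $t$-exact while $F_{d+1}$ has cohomology sheaves in degrees $\leq-(d+1)$, so that $j_{d+1}^{-1}$ of the truncation morphism is an isomorphism; the verification is a routine diagram chase with the standard $t$-structure. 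Apart from this the argument is elementary, and the statement may also be quoted from standard references such as \cite{KS} and \cite{HTT}.
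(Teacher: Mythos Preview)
The paper does not prove this proposition; it is simply stated as a well-known fact (the classical Deligne construction of the intersection complex), with the ambient references \cite{KS}, \cite{HTT}, \cite{DIMCA}, \cite{SCHURMANN} in the background. So there is nothing to compare against: you have supplied a proof where the paper supplies none.

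Your argument is correct and is essentially the standard one. The inductive scheme is right, the support condition on the new stratum $Z$ is immediate from the truncation, and your treatment of the cosupport condition---identifying the adjunction unit $F_{d}\to\DER{j_{d+1}}_{*}j_{d+1}^{-1}F_{d}$ with the canonical map $\tau^{\leq-(d+1)}G\to G$ for $G=\DER{j_{d+1}}_{*}F_{d+1}$, and then reading off $i_{Z}^{!}F_{d}\simeq(i_{Z}^{-1}\tau^{\geq-d}G)[-1]$ from the attaching and truncation triangles---is the right idea. A clean way to phrase the ``one point requiring care'' is that the adjoint of the truncation morphism $F_{d}\to\DER{j_{d+1}}_{*}F_{d+1}$ is the identity of $F_{d+1}$, hence that morphism \emph{is} the unit under the identification $j_{d+1}^{-1}F_{d}\simeq F_{d+1}$.

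One cosmetic remark: the sentence ``we may assume $U_{n}=X_{\REG}$'' is unnecessary and slightly misleading, since the proposition is stated for an arbitrary Whitney stratification. Your uniqueness characterization of $\ICX$ already works with any dense open smooth $U$ in place of $X_{\REG}$, so you can run the argument for the given stratification without this reduction.
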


In particular, we have
\begin{cor}\label{korituic}
Let $V$ be an algebraic variety over $\C$ of pure dimension $n$,  
and assume that $V$ has only one singular point at $p \in V$. 
Then we have an isomorphism
\[\ICV\simeq\tau^{\leq -1}(\DER{j}_{*}\KK_{V_{\REG}}[n]),\]
where $j:V_{\REG}=V\setminus\{p\}\hookrightarrow V$ is the inclusion map.
\end{cor}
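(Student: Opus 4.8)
The plan is to deduce this directly from the previous Proposition, applied to the two-stratum Whitney stratification of $V$. Since $V$ is nonsingular outside $p$, its only strata are the regular part $V_{\REG}$, which is of pure dimension $n$, and the point $\{p\}$; that $V = V_{\REG}\sqcup\{p\}$ is a Whitney stratification is classical for isolated singularities (alternatively, it follows from the existence of a Whitney stratification of $V$, whose only stratum incident to $\{p\}$ can be taken to be $V_{\REG}$ after merging the top-dimensional strata). I would then record the resulting data in the notation of the Proposition: the skeleta are $X_n = V$ and $X_d = \{p\}$ for $0\le d\le n-1$, with $X_{-1}=\emptyset$, so that $U_0 = V$ and $U_d = V\setminus\{p\} = V_{\REG}$ for every $1\le d\le n$. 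In particular $j_n, j_{n-1},\dots,j_2$ are all the identity morphism of $V_{\REG}$, while $j_1\colon V_{\REG}\hookrightarrow V$ is precisely the inclusion $j$ of the statement.

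Substituting this into the isomorphism of the Proposition and using $\DER(\mathrm{id})_* = \mathrm{id}$, I would obtain
\[
\ICV \;\simeq\; \tau^{\le -1}\,\DER{j}_*\,\bigl(\tau^{\le -2}\circ\tau^{\le -3}\circ\cdots\circ\tau^{\le -n}\bigr)\bigl(\KK_{V_{\REG}}[n]\bigr).
\]
Now $\KK_{V_{\REG}}[n]$ is concentrated in the single cohomological degree $-n$, so $\tau^{\le -d}$ acts as the identity on it whenever $2\le d\le n$; applying $\tau^{\le -n}, \tau^{\le -(n-1)},\dots,\tau^{\le -2}$ in succession therefore leaves it unchanged. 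Hence
\[
\ICV \;\simeq\; \tau^{\le -1}\bigl(\DER{j}_*\,\KK_{V_{\REG}}[n]\bigr),
\]
which is the desired formula.

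I do not expect a genuine obstacle in this argument; the only step deserving a word of justification is the very first one — that the decomposition $V_{\REG}\sqcup\{p\}$ is a legitimate Whitney stratification, so that the Proposition may be invoked with exactly these two strata — and everything afterwards is the routine bookkeeping encapsulated in the observation that the truncations $\tau^{\le -d}$, $2\le d\le n$, are harmless because the complex they act on sits in degree $-n$.
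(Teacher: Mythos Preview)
Your argument is correct and is exactly the intended deduction: the paper states the corollary with only the words ``In particular, we have'' after the Deligne-construction Proposition, leaving the verification to the reader, and your proof carries out precisely that verification with the two-stratum Whitney stratification $V=V_{\REG}\sqcup\{p\}$. The bookkeeping on the skeleta $X_d$ and the observation that the inner truncations $\tau^{\le -d}$ ($2\le d\le n$) are trivial on a complex concentrated in degree $-n$ are both correct.
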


From now on, we mainly consider the shifted complex $\ICVT:=\ICV[-n]$ of $\ICV$. 
By Corollary~\ref{korituic}, we have
\[\ICVT \simeq \tau^{\leq n-1}\DER{j}_{*}\KK_{V_{\REG}}.\]

\section{Limit mixed Hodge structures and mixed Hodge modules}
In this section, we recall basic properties on limit mixed Hodge structures and mixed Hodge modules
(see \cite{MHM}, \cite{CSH}, \cite[Chapter~8]{HTT}, etc. for the details).
\subsection{Limit mixed Hodge structures}\label{mhs}
In this subsection, we recall fundamental notations on limit mixed Hodge structures.

We denote by $\SHM$ the abelian category of mixed Hodge structures,
by $\SHM^{p}$ the full subcategory of $\SHM$ consisting of
\textit{graded-polarizable} mixed Hodge structures.

Recall the Deligne's fundamental theorem on the theory of mixed Hodge structures.
\begin{theo}[Deligne \cite{De}]\label{delmixed}
Let $X$ be an arbitrary algebraic variety over $\C$.
Then $H^{k}(X;\Q)$ and $H^{k}_c(X;\Q)$ have canonical mixed Hodge structures for any $k\in\Z$.
\end{theo}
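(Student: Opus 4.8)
The plan is to follow Deligne's original construction from \cite{De} (Hodge II for smooth varieties, Hodge III in general), building the mixed Hodge structure by a d\'{e}vissage to the smooth projective case. The base case is classical Hodge theory: if $X$ is smooth and projective, then $H^{k}(X;\Q)$ carries a pure Hodge structure of weight $k$, which I would take as known (or reprove via harmonic theory and the $\partial\overline{\partial}$-lemma).

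First I would treat a smooth but not necessarily proper $X$. By Hironaka's resolution of singularities, embed $X$ as $\overline{X}\setminus D$ where $\overline{X}$ is smooth projective and $D$ is a normal crossing divisor. The logarithmic de Rham complex $\Omega^{\bullet}_{\overline{X}}(\log D)$ computes $H^{\bullet}(X;\C)$; I would equip it with the Hodge filtration $F$ (the b\^{e}te filtration by form degree) and the weight filtration $W$ (by the number of logarithmic poles, suitably shifted). The key inputs are that the Hodge-to-de Rham spectral sequence degenerates at $E_{1}$ and the weight spectral sequence degenerates at $E_{2}$; moreover the $E_{1}$-terms of the weight spectral sequence are cohomology groups of the smooth projective intersections of components of $D$, so by the base case the $W$-graded pieces of $H^{k}(X;\Q)$ are pure of the expected weight. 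This shows $(H^{k}(X;\Q),F,W)$ is a mixed Hodge structure with weights in $[k,2k]$, lying in $\SHM^{p}$.

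Next, for arbitrary $X$, I would assemble these pieces by cohomological descent. Iterating resolution of singularities produces a smooth proper hyperresolution $\varepsilon\colon X_{\bullet}\to X$ by a simplicial smooth variety; combining the logarithmic data on the terms $X_{p}$ yields a cohomological mixed Hodge complex on $X$ whose associated spectral sequence $E_{1}^{p,q}=H^{q}(X_{p};\Q)\Rightarrow H^{p+q}(X;\Q)$ has $E_{1}$-page consisting of mixed Hodge structures (by the smooth case), with differentials that are morphisms in $\SHM^{p}$. Since $\SHM^{p}$ is abelian and its morphisms are strictly compatible with $F$ and $W$, this is a spectral sequence of mixed Hodge structures, so the abutment $H^{k}(X;\Q)$ inherits a canonical one; the compactly supported groups $H^{k}_{c}(X;\Q)$ are obtained symmetrically, either by a dual construction using a simplicial compactification and compactly supported logarithmic complexes, or (once the mixed Hodge module formalism is in place) by Poincar\'{e}--Verdier duality. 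Independence of the chosen compactification and hyperresolution, and functoriality for morphisms, then follow from the fact that any two choices are dominated by a third together with strictness of morphisms of mixed Hodge structures.

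The technical heart, and the main obstacle, is the degeneration and strictness package: proving $E_{1}$-degeneration of the Hodge spectral sequence for the logarithmic complex, $E_{2}$-degeneration of the weight spectral sequence, and the existence and functoriality of hyperresolutions. These are precisely the results of \cite{De}; in the present paper the theorem is simply quoted and used as a black box.
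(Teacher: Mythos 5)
The paper gives no proof of this statement at all: it is imported verbatim as Deligne's theorem from \cite{De} and used as a black box throughout. Your outline is a faithful and accurate summary of the standard argument of Hodge II/III (log de Rham complex with b\^ete Hodge filtration and pole-order weight filtration for the smooth case, then smooth proper hyperresolutions and the descent spectral sequence for the general case, with strictness of morphisms in $\SHM^{p}$ giving independence of choices), and you correctly flag that the degeneration and hyperresolution results are exactly the content of the cited reference, so your treatment matches the paper's.
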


Note that if $X$ is a smooth projective variety,
these mixed Hodge structures coincide with the usual pure Hodge structures.
We call the mixed Hodge structures in Theorem~\ref{delmixed} \textit{Deligne's mixed Hodge structures}.

We denote by $B(0,\eta)$ the open disc in $\C$ centered at $0$ with radius $\eta$ and
by $B(0,\eta)^{*}$ the punctured open disc $B(0,\eta)\setminus\{0\}$.
Let $f\colon\C^n\to\C$ be a non-constant polynomial map satisfying that $f(0)=0$.
Then for a sufficiently small $\eta>0$, the restriction of  $f$ to $f^{-1}(B(0,\eta)^{*})$ is a locally trivial fibration.
Considering a lift of a path along a small circle around $0$ in $B(0,\eta)^{*}$,
for any $\epsilon\in B(0,\eta)^{*}$
we can define a \textit{monodromy automorphism} 
\[\Psi_{k}\colon H^{k}_{c}(f^{-1}(\epsilon);\C)\overset{\sim}{\to} H^{k}_{c}(f^{-1}(\epsilon);\C)\]
for any $k\in\Z$.
For any $\epsilon\in B(0,\eta)^{*}$,
the $\Q$-vector space $H_{c}^{k}(f^{-1}(\epsilon);\Q)$ has a canonical mixed Hodge structure by Theorem~\ref{delmixed}.
We denote by $F^{\bullet}$ and $W_{\bullet}$ its Hodge and weight filtrations.
On the other hand, 
we can endow $H_{c}^{k}(f^{-1}(\epsilon);\C)$ and $H_{c}^{k}(f^{-1}(\epsilon);\Q)$ with another filtrations $F_{\infty}^{\bullet}$ and $M_{\bullet}$, which are called the \textit{limit Hodge filtration} and the \textit{relative monodromy filtration}, respectively.
Moreover, Steenbrink-Zucker proved the following theorem.
 
\begin{theo}[Steenbrink-Zucker~\cite{STEZUC}]\label{limitmixed}
For any $k\in\Z$, the triple $(H_{c}^{k}(f^{-1}(\epsilon);\Q), F^{\bullet}_{\infty}, M_{\bullet})$ defines
a mixed Hodge structure.
\end{theo}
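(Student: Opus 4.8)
The plan is to realize the family of compactly-supported cohomology groups $\{H^{k}_{c}(f^{-1}(\epsilon);\Q)\}_{\epsilon\in B(0,\eta)^{*}}$ as an admissible variation of mixed Hodge structure over the punctured disc, and then to deduce the statement from the general theory of limit mixed Hodge structures for such variations. First I would verify that the local system $\mathcal{H}^{k}_{c}$ on $B(0,\eta)^{*}$ with fiber $H^{k}_{c}(f^{-1}(\epsilon);\Q)$ at $\epsilon$ underlies a graded-polarizable variation of mixed Hodge structure: each fiber carries Deligne's mixed Hodge structure by Theorem~\ref{delmixed}, the Hodge filtration $F^{\bullet}$ varies holomorphically and satisfies Griffiths transversality, and the weight filtration $W_{\bullet}$ is given by flat subbundles, these facts being consequences of the functoriality of Deligne's construction under the base change furnished by the local triviality of $f$ over $B(0,\eta)^{*}$. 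Because $f^{-1}(\epsilon)$ is non-compact affine, the fibers are genuinely mixed, which is exactly why the pure theory alone will not suffice.

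Next I would pass to the associated graded with respect to $W_{\bullet}$. For each $\ell$ the pure variation $\GR^{W}_{\ell}\mathcal{H}^{k}_{c}$ is polarizable of weight $\ell$, and Schmid's nilpotent orbit theorem and $SL_{2}$-orbit theorem supply its limit mixed Hodge structure: the limit Hodge filtration arises from the Deligne canonical extension of the Hodge bundle across the puncture, while the limit weight filtration is the monodromy weight filtration $M(N)$ of the nilpotent logarithm $N$ of the unipotent part of the local monodromy, centered at $\ell$. This step fixes the analytic normalization of the limit Hodge filtration and disposes of the pure pieces.

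I would then set up the two filtrations on the limit fiber in the mixed case. The limit Hodge filtration $F^{\bullet}_{\infty}$ is defined, as in the pure case, from the canonical extension, and the weight filtration is the relative monodromy filtration $M_{\bullet}=M(N,W_{\bullet})$ of $N$ relative to $W_{\bullet}$. Admissibility of the variation guarantees both that this relative monodromy filtration exists and that $F^{\bullet}_{\infty}$ behaves well with respect to it; admissibility holds here because the variation is of geometric origin, arising from the morphism $f$.

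The heart of the argument, and the step I expect to be the main obstacle, is to show that $(H^{k}_{c}(f^{-1}(\epsilon);\Q),F^{\bullet}_{\infty},M_{\bullet})$ is a mixed Hodge structure, i.e. that $F^{\bullet}_{\infty}$ induces a pure Hodge structure of weight $m$ on each $\GR^{M}_{m}$. One cannot argue weight-by-weight naively, since $N$ is a morphism of bidegree $(-1,-1)$ that shifts $W_{\bullet}$, so the interaction of $M_{\bullet}$, $W_{\bullet}$ and $F^{\bullet}_{\infty}$ must be controlled simultaneously. I would follow Steenbrink-Zucker and reduce the mixed assertion to the pure assertion of the second step by induction on the length of $W_{\bullet}$, exploiting strictness of the relevant maps together with Deligne's uniqueness and functoriality for relative monodromy filtrations. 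The analytic engine that makes the reduction succeed is the $SL_{2}$-orbit theorem: it governs the asymptotics of the Hodge norm and establishes that $F^{\bullet}_{\infty}$ and $M_{\bullet}$ are opposed, which is precisely the input upgrading the compatible pair of filtrations to a genuine mixed Hodge structure.
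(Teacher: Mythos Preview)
The paper does not give its own proof of this theorem: it is quoted as a result of Steenbrink--Zucker~\cite{STEZUC} and used as a black box, with only the defining properties of the relative monodromy filtration $M_{\bullet}$ recalled afterwards. There is therefore nothing in the paper to compare your argument against beyond the attribution.

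That said, your outline is a faithful sketch of the Steenbrink--Zucker strategy itself. You correctly identify the key structural inputs: that the local system $\epsilon\mapsto H^{k}_{c}(f^{-1}(\epsilon);\Q)$ underlies a graded-polarizable variation of mixed Hodge structure of geometric origin, hence admissible; that Schmid's nilpotent and $SL_{2}$-orbit theorems handle each pure graded piece $\GR^{W}_{\ell}$; that the limit weight filtration in the mixed case is the relative monodromy filtration $M(N,W_{\bullet})$; and that the passage from the pure case to the mixed case proceeds by induction on the length of $W_{\bullet}$. One point to be careful about is the assertion that admissibility ``holds here because the variation is of geometric origin'': in the original Steenbrink--Zucker paper this is not a triviality but part of what must be established, and in the modern formulation it is most cleanly obtained a posteriori from M.~Saito's theory of mixed Hodge modules (indeed, the present paper later remarks that the mixed Hodge structure on $H^{k}_{c}(f^{-1}(\epsilon);\Q)$ obtained via $\psi_{t}f_{!}\Q^{H}_{\C^{n}}$ agrees with the Steenbrink--Zucker one). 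If you intend a self-contained argument in the spirit of \cite{STEZUC}, you should flag that verifying admissibility for this specific geometric variation requires work and is not merely a formal consequence of functoriality.
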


This mixed Hodge structure in Theorem~\ref{limitmixed} is called the \textit{limit mixed Hodge structure}. 
The relative monodromy filtration $M_{\bullet}$ satisfies the following properties.
The monodromy automorphims $\Psi_{k}$ are decomposed as $\Psi_{k}=\Psi_{k}^{s}\Psi_{k}^{u}$,
where $\Psi_{k}^{s}$ is the semisimple part of $\Psi_{k}$ and $\Psi_{k}^{u}$ is the unipotent part of $\Psi_{k}$.
We denote by $N$ the logalithm operator $\log \Psi_{k}^{u}$  of $\Psi_{k}^{u}$ on $H^{k}_{c}(f^{-1}(\epsilon);\C)$ and for $r\in\Z$ by $N(r)$ the nilpotent operator on $\GR^{W}_{r}H^{k}_{c}(f^{-1}(\epsilon);\C)$ induced by $N$.
We denote by $M(r)_{\bullet}$ the filtration of $\GR^{W}_{r}H^{k}_{c}(f^{-1}(\epsilon);\C)$ induced by $M_{\bullet}$.
Then $M(r)_{\bullet}$ satisfies the following properties:
\begin{enumerate}
\item$M(r)_{2r}=\GR^{W}_{r}H^{k}_{c}(f^{-1}(\epsilon);\C)$,
\item$N(M(r)_{l})\subset M(r)_{l-2}$ \ for any $l\geq 0$, and
\item the map 
\[N^{l}\colon\GR^{M(r)}_{r+l}\GR^{W}_{r}H^{k}_{c}(f^{-1}(\epsilon);\C)\to \GR^{M(r)}_{r-l}\GR^{W}_{r}H^{k}_{c}(f^{-1}(\epsilon);\C)\]
is an isomorphism.
\end{enumerate}
For an eigenvalue $\lambda\in\C$ of $\Psi_{k}$,
we denote by $H^{k}_{c}(f^{-1}(\epsilon);\C)_{\lambda}$ the generalized eigenspace of $\Psi_{k}$ for the eigenvalue $\lambda$.
Then the number of the Jordan blocks in the linear operator on $\GR^{W}_{r}H^{k}_{c}(f^{-1}(\epsilon);\C)$ induced by $\Psi_{k}$ for the eigenvalue $\lambda$ with size $s$
is equal to 
\[\dim{\GR^{M}_{r+1-s}\GR^{W}_{r}H^{k}_{c}(f^{-1}(\epsilon);\C)_{\lambda}}-\dim{\GR^{M}_{r-1-s}\GR^{W}_{r}H^{k}_{c}(f^{-1}(\epsilon);\C)_{\lambda}}.\]

Eventually, $H^{k}_{c}(f^{-1}(\epsilon);\Q)$ has three filtrations $F_{\infty}^{\bullet}, M_{\bullet}$ and $W_{\bullet}$.
We set
\[h^{p,q,r}_{\lambda}(H^{k}_{c}(f^{-1}(\epsilon);\C)):=\dim \GR^{p}_{F_{\infty}}\GR^{M}_{p+q}\GR^{W}_{r}H^{k}_{c}(f^{-1}(\epsilon);\C)_{\lambda}.\]
Then we have the following symmetry:
\[h^{p,q,r}_{\lambda}(H^{k}_{c}(f^{-1}(\epsilon);\C))=h^{r-q,r-p,r}_{\lambda}(H^{k}_{c}(f^{-1}(\epsilon);\C)).\]
The limit mixed Hodge structure encodes the data of the monodromy $\Psi_{k}$ only partially.
In general, the limit mixed Hodge structure does not determine the Jordan normal form of the mondromy $\Psi_{k}$.
However in some good situation as in the following proposition, it completely recovers the Jordan normal form of the monodromy.

\begin{prop}[Stapledon~{\cite[Example~6.7]{SFM}}]\label{syuucyuu}
Assume that $f$ is convenient~(see Definition~\ref{convenient2}) and non-degenerate~(see Definition~\ref{nondeg}).
Then we have $H^{k}_{c}(f^{-1}(\epsilon);\C)=0$ for any $k\in\Z$ with $k\neq n-1,2(n-1)$ and the monodromy action on 
$H^{2(n-1)}_{c}(f^{-1}(\epsilon);\C)\simeq \C$ is trivial.
Moreover, the monodromy automorphism $\Psi_{n-1}$ on the graded piece $\GR^{W}_{r}H^{n-1}_{c}(f^{-1}(\epsilon);\C)$
is trivial for $r\neq n-1$.
Therefore, by the properties of $M_{\bullet}$, for any $r\neq n-1$ and $q\neq r$, we have
\[\GR^{M}_{q}\GR^{W}_{r}H^{n-1}_{c}(f^{-1}(\epsilon);\C)=0.\]
In particular, for any eigenvalue $\lambda\neq 1$ and any $r\neq n-1$, we have
\[\GR^{W}_{r}H^{n-1}_{c}(f^{-1}(\epsilon);\C)_{\lambda}=0,\]
and
\[H^{n-1}_{c}(f^{-1}(\epsilon);\C)_{\lambda}\simeq \GR^{W}_{n-1}H^{n-1}_{c}(f^{-1}(\epsilon);\C)_{\lambda}.\]
\end{prop}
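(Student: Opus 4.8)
The statement is Stapledon's Example~6.7 in \cite{SFM}, and the plan is to deduce it from the topology of the generic fibre $F:=f^{-1}(\epsilon)$ (a smooth affine variety of pure dimension $n-1$) together with a toric compactification of the family $\{f^{-1}(\epsilon)\}$ whose divisor ``at infinity'' does not move with $\epsilon$. Recall first that, as $\epsilon$ ranges over $B(0,\eta)^{*}$, the groups $H^{k}_{c}(F;\C)$ underlie a variation of mixed Hodge structure, so that the weight filtration $W_{\bullet}$ is preserved by the monodromy $\Psi_{k}$. For the vanishing assertion I would argue as follows. Artin vanishing gives $H^{k}(F;\C)=0$ for $k>n-1$, hence $H^{k}_{c}(F;\C)\simeq H^{2(n-1)-k}(F;\C)^{*}=0$ for $k<n-1$ by Poincar\'e duality, and $H^{k}_{c}(F;\C)=0$ for $k>2(n-1)$ for dimension reasons. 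For the range $n-1<k<2(n-1)$ I would invoke the classical fact that the generic fibre of a convenient non-degenerate polynomial is homotopy equivalent to a wedge of $(n-1)$-spheres (Broughton for tame polynomials, and Oka and N\'emethi--Zaharia in the non-degenerate case), so that $H^{j}(F;\C)=0$ for $0<j<n-1$ and dually $H^{k}_{c}(F;\C)=0$ for $n-1<k<2(n-1)$. Finally $H^{2(n-1)}_{c}(F;\C)\simeq H^{0}(F;\C)^{*}\simeq\C$ ($F$ being connected), on which the monodromy acts trivially.

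The heart of the matter is the triviality of $\Psi_{n-1}$ on $\GR^{W}_{r}H^{n-1}_{c}(F;\C)$ for $r\neq n-1$; since $H^{n-1}_{c}(F;\C)$ is the compactly supported cohomology of a smooth variety, it has weights $\leq n-1$, so only $r\leq n-2$ needs to be treated. The idea is to compactify the fibres uniformly. Let $\Gamma=\Conv(\supp f\cup\{0\})$ be the Newton polytope of $f-\epsilon$, choose a smooth projective refinement $\Sigma$ of its normal fan that still contains the cone $\R^{n}_{\geq 0}$, and let $X_{\Sigma}$ be the corresponding toric variety, containing $\C^{n}$ as the chart of $\R^{n}_{\geq 0}$. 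Put $\overline{F}_{\epsilon}:=\overline{f^{-1}(\epsilon)}\subset X_{\Sigma}$ and $D:=\overline{F}_{\epsilon}\setminus F=\overline{F}_{\epsilon}\cap(X_{\Sigma}\setminus\C^{n})$. Here convenience is crucial: since $f$ contains a monomial $x_{i}^{d_{i}}$ for every $i$, the normal cone of the vertex $0$ of $\Gamma$ is exactly $\R^{n}_{\geq 0}$, so every torus orbit of $X_{\Sigma}$ lying in $X_{\Sigma}\setminus\C^{n}$ corresponds to a face of $\Gamma$ not containing $0$, and along such an orbit $\overline{F}_{\epsilon}$ is governed by the corresponding face polynomial of $f-\epsilon$, which does not involve the constant term $-\epsilon$. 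Hence $D$ is independent of $\epsilon$; by non-degeneracy it is a reduced simple normal crossing divisor in $\overline{F}_{\epsilon}$, and $\overline{F}_{\epsilon}$ is smooth projective for $\epsilon\neq 0$. Thus over $B(0,\eta)^{*}$ the smooth projective family $\overline{F}_{\epsilon}$ contains the constant family $D$, so $\Psi_{n-1}$ acts trivially on $H^{\bullet}(D;\C)$. Feeding this into the long exact sequence of mixed Hodge structures
\[
\cdots\to H^{n-2}(\overline{F}_{\epsilon};\C)\to H^{n-2}(D;\C)\xrightarrow{\partial}H^{n-1}_{c}(F;\C)\to H^{n-1}(\overline{F}_{\epsilon};\C)\to\cdots,
\]
which is a sequence of local systems over $B(0,\eta)^{*}$: since $H^{n-1}(\overline{F}_{\epsilon};\C)$ is pure of weight $n-1$ and $H^{n-2}(D;\C)$ has weights $\leq n-2$, the image of $\partial$ is exactly $W_{n-2}H^{n-1}_{c}(F;\C)$, and being a quotient of $H^{n-2}(D;\C)$ it carries trivial monodromy. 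Therefore $\Psi_{n-1}$ is trivial on $W_{n-2}H^{n-1}_{c}(F;\C)$, a fortiori on each $\GR^{W}_{r}H^{n-1}_{c}(F;\C)$ with $r\leq n-2$.

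The remaining two assertions are then formal consequences of the listed properties of the relative monodromy filtration. Writing $G_{r}:=\GR^{W}_{r}H^{n-1}_{c}(F;\C)$, triviality of $\Psi_{n-1}$ on $G_{r}$ (for $r\neq n-1$) forces its unipotent part, hence $N(r)=\log\Psi_{n-1}^{u}|_{G_{r}}$, to vanish; by property~(iii) the map $N(r)^{l}\colon\GR^{M(r)}_{r+l}G_{r}\to\GR^{M(r)}_{r-l}G_{r}$ is an isomorphism for every $l\geq 1$, so both sides vanish, $M(r)_{\bullet}$ is concentrated in degree $r$, and $\GR^{M}_{q}\GR^{W}_{r}H^{n-1}_{c}(F;\C)=0$ for $q\neq r$ and $r\neq n-1$. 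Likewise, $\Psi_{n-1}$ being the identity on $G_{r}$ means that only the eigenvalue $1$ occurs there, so $\GR^{W}_{r}H^{n-1}_{c}(F;\C)_{\lambda}=0$ for $\lambda\neq 1$ and $r\neq n-1$; since $W_{\bullet}$ is compatible with the generalized eigenspace decomposition of $\Psi_{n-1}$, for $\lambda\neq 1$ the only possibly nonzero weight-graded piece of $H^{n-1}_{c}(F;\C)_{\lambda}$ is the one in weight $n-1$, whence $H^{n-1}_{c}(F;\C)_{\lambda}\simeq\GR^{W}_{n-1}H^{n-1}_{c}(F;\C)_{\lambda}$.

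I expect the real obstacle to be the toric bookkeeping in the second paragraph: verifying that convenience pins the normal cone of the vertex $0\in\Gamma$ down to $\R^{n}_{\geq 0}$, identifying exactly the torus orbits contained in $X_{\Sigma}\setminus\C^{n}$, and checking that non-degeneracy makes $\overline{F}_{\epsilon}$ smooth and $D$ a genuinely constant family of simple normal crossing divisors, so that the weight argument goes through; once this is in place the rest is routine mixed Hodge theory. Alternatively, all of this is packaged in Stapledon's explicit combinatorial formula for the motivic nearby fibre of a convenient non-degenerate $f$ in \cite{SFM}, together with the motivic Milnor fibre computation of Matsui--Takeuchi in \cite{MT}, from whose Hodge--Deligne realisations the weight and monodromy gradings can be read off directly.
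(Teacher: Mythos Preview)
The paper does not actually prove this proposition; it attributes the statement to Stapledon and remarks only that it ``follows from a deep argument combining the computation of the motivic nearby fiber and some combinatorial results,'' citing \cite{SFM}, \cite{S-T}, and \cite{GLM}. Your proposal instead supplies a direct geometric argument via a toric compactification, and this is a genuinely different and more elementary route than the motivic one the paper defers to. The key geometric observation---that convenience forces the normal cone of the vertex $0\in\Gamma=\Conv(\supp f\cup\{0\})$ to be exactly $\R^{n}_{\geq 0}$, so that the boundary $D=\overline{F}_{\epsilon}\setminus F$ is governed only by faces of $\Gamma$ not containing $0$ (equivalently, faces of $\NP(f)$) and is therefore independent of $\epsilon$---is precisely what makes the weight argument work without any motivic input. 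Your identification $\mathrm{Im}\,\partial=W_{n-2}H^{n-1}_{c}(F;\C)$, from purity of $H^{n-1}(\overline{F}_{\epsilon};\C)$ together with the weight bound on $H^{n-2}(D;\C)$ and strictness of morphisms of mixed Hodge structures, is correct. The one step that deserves an extra line is the smoothness of $\overline{F}_{\epsilon}$: you need $f-\epsilon$ to be non-degenerate with respect to $\Gamma$, and for the faces of $\Gamma$ containing $0$ this amounts to $\epsilon$ being a regular value of each coordinate restriction $f|_{\{x_{i}=0:\,i\in I\}}$ on the relevant torus; since a polynomial map to $\C$ has only finitely many critical values, this holds for all sufficiently small $\epsilon\neq 0$, which is all you need. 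What your approach buys over the paper's cited one is transparency---one sees directly, at the level of a single long exact sequence, that the non-trivial monodromy is confined to weight $n-1$---whereas the motivic route recovers the same conclusion only after passing through equivariant Hodge--Deligne polynomials and Stapledon's combinatorial identities; conversely, the motivic computation yields the full refined limit mixed Hodge numbers, finer numerical information that your compactification argument does not by itself produce.
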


This result follows from a deep argument combining the computation of the motivic nearby fiber and some combinatorial results (see Stapledon~\cite{SFM}, Saito-Takeuchi~\cite{S-T}, Guibert-Loeser-Merle~\cite{GLM}).
This proposition implies that
for an eigenvalue $\lambda\neq 1$, the filtration $M_{\bullet}$ is equal to $M(n-1)_{\bullet}$ on $H^{n-1}_{c}(f^{-1}(\epsilon);\C)_{\lambda}$.
Thus, the Jordan normal form of the monodromy automorphism $\Psi_{n-1}$ for the eigenvalue $\lambda\neq1$ can be completely determined by
the dimensions of the graded pieces $\GR^{M}_{q}H^{n-1}_{c}(f^{-1}(\epsilon);\C)_{\lambda}$ with respect to the monodromy filtration $M_{\bullet}$.

Finally, we define \textit{virtual Poincar\'e polynomials}.
\begin{defi}
Let $C$ be a complex of mixed Hodge structures.
Then we define a Laurent polynomial $\VP(C)(T)$ with coefficients in $\Z$ by
\[\VP(C)(T):=\sum_{r\in\Z}\left(\sum_{k\in\Z}(-1)^{k}\dim \GR^{W}_{r}H^{k}(C)\right)T^{r}~\in\Z[T^{\pm}],\]
where $\GR^{W}_{r}H^{k}(C)$ stands for the $r$-th graded piece with respect to the weight filtration $W_{\bullet}$ of the mixed Hodge structure of $H^{k}(C)$.
We call it the \textit{virtual Poincar\'e polynomial} of $C$.
\end{defi}

Virtual Poincar\'e polynomials satisfy the following property.
\begin{prop}
\begin{enumerate}
\item Let $C\to D\to E\xrightarrow{+1}$ be a distinguished triangle in $\DCB(\SHM)$.
Then we have
\[\VP(C)+\VP(E)=\VP(D).\]
\item Let $C$ and $D$ be elements of $\DCB(\SHM)$.
Then we have
\[\VP(C\otimes D)=\VP(C)\VP(D).\]
\end{enumerate}
\end{prop}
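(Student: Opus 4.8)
The plan is to reduce both identities to one structural input about $\SHM$: every morphism of mixed Hodge structures is strict for the weight filtration, so that for each $r\in\Z$ the functor $\GR^{W}_{r}(-)$ is exact from $\SHM$ to finite-dimensional $\Q$-vector spaces. Applying the $\GR^{W}_{r}$ termwise and packaging the results into a single $\Z$-graded object (with $T$ recording $r$) produces an exact, triangulated functor from $\DCB(\SHM)$ to complexes of $\Z$-graded $\Q$-vector spaces that commutes with taking cohomology; by construction $\VP(C)$ is then the $T$-graded Euler characteristic of the image of $C$. In these terms, (i) is the additivity and (ii) the multiplicativity of the Euler characteristic.

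For (i), I would take the long exact cohomology sequence
\[
\cdots\to H^{k}(C)\to H^{k}(D)\to H^{k}(E)\to H^{k+1}(C)\to\cdots
\]
of the distinguished triangle $C\to D\to E\xrightarrow{+1}$; all its arrows are morphisms of mixed Hodge structures. Applying the exact functor $\GR^{W}_{r}$ yields, for each fixed $r$, a bounded exact sequence of finite-dimensional $\Q$-vector spaces, along which the alternating sum of dimensions vanishes:
\[
\sum_{k\in\Z}(-1)^{k}\bigl(\dim\GR^{W}_{r}H^{k}(C)-\dim\GR^{W}_{r}H^{k}(D)+\dim\GR^{W}_{r}H^{k}(E)\bigr)=0.
\]
Multiplying by $T^{r}$ and summing over $r\in\Z$ gives precisely $\VP(C)-\VP(D)+\VP(E)=0$.

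For (ii), I note first that the tensor product is exact in each variable on $\SHM$ (it is so on the underlying $\Q$-vector spaces, $\Q$ being a field), hence needs no derivation on $\DCB(\SHM)$, and that the weight filtration is multiplicative, $\GR^{W}_{r}(A\otimes B)=\bigoplus_{p+q=r}\GR^{W}_{p}A\otimes\GR^{W}_{q}B$ for $A,B\in\SHM$. Thus the exact $\otimes$-functor above carries $C\otimes D$ to the tensor product of the images of $C$ and $D$, and over a field the Künneth formula gives, with no correction terms,
\[
\dim\GR^{W}_{r}H^{m}(C\otimes D)=\sum_{\substack{i+j=m\\ p+q=r}}\dim\GR^{W}_{p}H^{i}(C)\cdot\dim\GR^{W}_{q}H^{j}(D).
\]
Multiplying by $(-1)^{m}T^{r}$, summing over $m$ and $r$, and reorganizing the sum along the independent pairs $(i,p)$ and $(j,q)$ via $(-1)^{m}=(-1)^{i}(-1)^{j}$ and $T^{r}=T^{p}T^{q}$, the double sum factors as $\VP(C)(T)\cdot\VP(D)(T)$.

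The genuinely non-formal content lies in the two foundational facts about $\SHM$ used above — strictness with respect to $W_{\bullet}$, equivalently exactness of each $\GR^{W}_{r}$, and the multiplicativity of $W_{\bullet}$ under $\otimes$ — which go back to Deligne's construction of mixed Hodge structures and its functoriality. Once these are granted the remainder is bookkeeping with Euler characteristics, and I do not expect any real obstacle; the only point requiring mild care is that one should work with the weight-graded pieces throughout (rather than with the mixed Hodge structures $H^{k}(C\otimes D)$ themselves, which need not split), so that the Künneth step genuinely takes place in the category of graded $\Q$-vector spaces.
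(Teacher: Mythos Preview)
Your argument is correct. The paper itself states this proposition without proof, treating it as a standard property of virtual Poincar\'e polynomials; your write-up supplies exactly the expected justification via strictness of the weight filtration (hence exactness of each $\GR^{W}_{r}$) together with the multiplicativity $\GR^{W}_{r}(A\otimes B)\cong\bigoplus_{p+q=r}\GR^{W}_{p}A\otimes\GR^{W}_{q}B$ and K\"unneth over $\Q$. Your care in passing to the weight-graded category before invoking K\"unneth is appropriate and makes the argument clean.
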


\subsection{Mixed Hodge modules}\label{MHM}
In this subsection, we recall fundamental notations on mixed Hodge modules.
\begin{nota}
Let $X$ be an algebraic variety over $\C$.
We denote by $\MHM(X)$ the abelian category of \textit{mixed Hodge modules} on $X$,
and by $\DCB\MHM(X)$ the bounded derived category of mixed Hodge modules.
Denote by $\RAT\colon \MHM(X) \to \PERV(X)(=\PERV(\Q_{X}))$ the forgetful functor, which assigns a mixed Hodge module to its underlying perverse sheaf.
It induces a functor from $\DCB\MHM(X)$ to $\DCB(\PERV(X))\simeq \DCBC(X)$, which we denote by the same symbol $\RAT$.
\end{nota}

We denote by $\mathrm{MF}_{\mathrm{\mathrm{rh}}}(D_{X},\Q)$ the category of triples $(\mathcal{M},F^{\bullet},K)$ consisting of a regular holonomic $D_{X}$-module $\mathcal{M}$, a good filtration $F^{\bullet}$ of $\mathcal{M}$ and a perverse sheaf $K\in\PERV(X)$ such that the image of $\mathcal{M}$ by the de Rham functor is $\C\otimes_{\Q} K$.
We write $\mathrm{MF_{rh}W}(D_{X},\Q)$ for the category of quadruples $(\mathcal{M},F^{\bullet},K,W_{\bullet})$ of $(\mathcal{M},F^{\bullet},K) \in \mathrm{MF}_{\mathrm{rh}}(D_{X},\Q)$
and its finite increasing filtration $W_{\bullet}$.
Recall that if $X$ is smooth, $\MHM(X)$ is an abelian subcategory of $\mathrm{MF_{rh}W}(D_{X},\Q)$.
Even if $X$ is singular, $\MHM(X)$ can be defined by using a locally embedding of $X$ into a smooth variety.
The category of mixed Hodge modules on the one point variety $\mathrm{pt}$ is equivalent to that of graded-polarizable mixed Hodge structures ${\SHM}^{p}$.
Let $f\colon X\to Y$ and $g\colon X\to \C$ be morphisms of algebraic varieties.
Then we have functors between derived categories of mixed Hodge modules,
$f_{*}$, $f^{*}$, $f_{!}$, $f^{!}$, $\psi_{g}$, $\phi_{g}$, $\DUAL$, $\otimes$ and $\mathcal{H}om$,
and all of them are compatible with the corresponding functors for constructible sheaves via the functor $\RAT$.
For example, we have $\RAT\circ f_{*}= \DER{f}_{*}\circ \RAT$ as a functor from $\DCB\MHM(X)$ to $\DCBC(Y)$.

The following definition is important.

\begin{defi}
Let $\mathcal{V} \in \DCB\MHM(X)$.
Then for any $w\in\Z$,
we say that \textit{$\mathcal{V}$ has mixed weights $\leq w$} ({\rm resp.} $\geq w$) if $\GR^{W}_{r}H^{k}(\mathcal{V})=0$ for any $r,k\in\Z$ with $r>k+w$ ({\rm resp.}\ $r<k+w$),
where $\GR^{W}_{r}H^{k}(\mathcal{V})$ is the $r$-th graded piece of $H^{k}(\mathcal{V})$ with respect to its weight filtration.
Moreover we say that \textit{$\mathcal{V}$ has a pure weight $w$} if $\GR^{W}_{r}H^{k}(\mathcal{V})=0$ for any $r\neq k$.
\end{defi}

We have the following relation between weights and functors of derived categories of mixed Hodge modules.
\begin{prop}[M. Saito~\cite{MHM}]\label{weight}
Let $X$, $Y$ and $Z$ be algebraic varieties and $f\colon X\to Y$ and $g\colon Z\to X$ be morphisms of algebraic varieties.
Let $\mathcal{V}\in \DCB\MHM(X)$.
Then we have the following:
\begin{enumerate}
\item If $\mathcal{V}$ has mixed weights $\leq w$, then $f_{!}\mathcal{V}$ and $g^{*}\mathcal{V}$ have mixed weights $\leq w$.
\item If $\mathcal{V}$ has mixed weights $\geq w$, then $f_{*}\mathcal{V}$ and $g^{!}\mathcal{V}$ have mixed weights $\geq w$.
\end{enumerate} 
\end{prop}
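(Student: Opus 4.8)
The plan is to reproduce the logical skeleton of M.~Saito's argument in \cite{MHM}, isolating the two substantial inputs — stability of purity under proper pushforward (relative Hard Lefschetz for polarizable Hodge modules) and the weight estimates for $\psi_{g}$ and $\phi_{g}$ — and then obtaining all four assertions from them by purely formal reductions. The first reduction is by Verdier duality: $\DUAL$ is exact on $\MHM$, interchanges ``mixed weights $\leq w$'' with ``mixed weights $\geq -w$'', and satisfies $\DUAL\circ f_{!}\simeq f_{*}\circ\DUAL$ and $\DUAL\circ g^{*}\simeq g^{!}\circ\DUAL$; hence statement~(2) follows from statement~(1) by applying $\DUAL$, and it suffices to treat $f_{!}$ and $g^{*}$.

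For $f_{!}$, choose a compactification and factor $f=\bar{f}\circ j$ with $j\colon X\hookrightarrow\bar{X}$ a dense open immersion and $\bar{f}\colon\bar{X}\to Y$ proper, so that $f_{!}=\bar{f}_{*}\circ j_{!}$ (using $\bar{f}_{!}=\bar{f}_{*}$). Thus it is enough to prove (a) that $p_{*}$ preserves ``mixed weights $\leq w$'' for $p$ proper, and (b) that $j_{!}$ preserves ``mixed weights $\leq w$'' for an open immersion $j$. For (a), a dévissage along the weight filtration (which is compatible with distinguished triangles, and whose graded pieces are pure) reduces to the pure case: if $M$ is a polarizable Hodge module pure of weight $w$, then $p_{*}M$ has mixed weights $\leq w$. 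This is where relative Hard Lefschetz enters — $p_{*}M\simeq\bigoplus_{k\in\Z}\bigl({}^{p}H^{k}(p_{*}M)\bigr)[-k]$ with ${}^{p}H^{k}(p_{*}M)$ pure of weight $w+k$, so that $\GR^{W}_{r}H^{k}(p_{*}M)=0$ for $r>k+w$. For (b), $j_{!}$ is constructed inside $\MHM$, locally via the $V$-filtration along an equation of the complement; the desired estimate is part of the compatibility of $j_{!}$ with $\psi_{g}$ and $\phi_{g}$, together with the weight estimates for those functors (with their relative monodromy weight filtrations) that are built into Saito's definition of a mixed Hodge module.

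For $g^{*}$, factor $g$ as a closed immersion followed by a smooth morphism: choosing an embedding of $Z$ into a smooth variety $\bar{Z}$ and composing with the graph of $g$ yields a closed immersion $i\colon Z\hookrightarrow\bar{Z}\times X$ with $g=\mathrm{pr}_{X}\circ i$, where $\mathrm{pr}_{X}\colon\bar{Z}\times X\to X$ is smooth. Smooth pullback, suitably shifted, sends pure modules to pure ones and hence preserves ``mixed weights $\leq w$'' — an elementary fact about filtered $D$-modules under smooth pullback. It remains to control $i^{*}$ for a closed immersion $i$ into a smooth variety; this is handled, as in \cite{MHM}, by induction on the codimension, computing $i^{*}M$ through nearby/vanishing cycle operations along local equations of the image, so that the weight estimates for $\psi_{g}$ and $\phi_{g}$ propagate through the resulting distinguished triangles and $i^{*}M$ acquires ``mixed weights $\leq w$''.

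The main obstacle is step (a) in its pure form — that proper pushforward preserves purity of weight. This is essentially the Hodge-theoretic decomposition theorem; its proof is the hard core of \cite{MHM}, an intricate induction on $\dim(\supp M)$ that, via Kashiwara's equivalence, decomposes a pure Hodge module into intermediate extensions of polarized variations of Hodge structure from strata, and then uses a Lefschetz pencil to express the pushforward in terms of lower-dimensional data through $\psi_{g}$ and $\phi_{g}$, invoking the classical Hard Lefschetz theorem and the semisimplicity of polarized variations of Hodge structure. Once this, together with the weight behaviour of $\psi_{g}$ and $\phi_{g}$, is granted, the four statements follow from the formal reductions above.
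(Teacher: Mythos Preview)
The paper does not prove this proposition at all: it is stated as a citation from M.~Saito's \cite{MHM} and used as a black box, so there is no ``paper's own proof'' to compare against. Your proposal is therefore not a comparison target but an independent attempt to sketch Saito's original argument.

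As a high-level roadmap of \cite{MHM} your outline is broadly accurate: the Verdier-duality reduction of (ii) to (i), the factorization of $f_{!}$ through a compactification, the d\'evissage to the pure case, and the use of relative Hard Lefschetz/decomposition for proper maps together with the weight estimates for $\psi$ and $\phi$ are indeed the structural pillars of Saito's proof. Two cautions are in order. First, your treatment of $j_{!}$ for an open immersion and of $i^{*}$ for a closed immersion is only gestured at; in Saito's theory these are not trivial consequences of ``compatibility with $\psi_{g}$ and $\phi_{g}$'' but require the full machinery of admissibility of the relative monodromy filtration and the inductive definition of $\MHM$, and a reader of your text would not be able to reconstruct the argument from what you wrote. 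Second, you correctly identify the decomposition theorem as the hard core, but you should be explicit that you are \emph{assuming} it rather than proving it---as written, the last paragraph reads like a proof sketch of that theorem too, which it is not. In short: the skeleton is right, but what you have is a table of contents for Saito's argument, not a proof; since the paper under review treats the result as a citation, that is arguably all that is called for here.
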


In particular, if $f$ is a proper morphism, 
the functor $f_{*}=f_{!}$ preserves the purity of $\mathcal{V}$.

We denote by $\point$ the one point variety.
Let $X$ be an algebraic variety and $a_{X}\colon X\to \point$ a morphism from $X$ to $\point$.
We denote by $\Q^{H}_{\point}$ the trivial Hodge module on $\point$,
and set $\Q^{H}_{X}:=a_{X}^{*}\Q^{H}_{\point}$.
Since the image of ${a_{X}}_{*}\Q^H_{X}$ by $\RAT$ is $\RG(X;\Q_{X})$, $H^{k}(X;\Q)$ has a mixed Hodge structure for any $k\in\Z$.
This mixed Hodge structure coincides with the one in Theorem~\ref{delmixed}.
Similarly, by considering ${a_{X}}_{!}\Q^{H}_{X}$, $H_{c}^{k}(X;\Q)$ has a mixed Hodge structure, which coincides with the one in Theorem~\ref{delmixed} for any $k\in\Z$.

Let $f\colon\C^n\to\C$ be a non-constant polynomial map satisfying that $f(0)=0$.
For a sufficiently small $\eta>0$, 
the restriction of $f$ to $f^{-1}(B(0,\eta)^{*})$ is a locally trivial fibration.
This implies that each cohomology sheaf of $\DER{f}_{!}\Q_{\C^{n}}$ is a locally constant sheaf on $B(0,\eta)^{*}$
whose stalk at each point $\epsilon\in B(0,\eta)^{*}$ is isomorphic to $\RGC(f^{-1}(\epsilon);\Q_{f^{-1}(\epsilon)})$.
Therefore, as an object in $\DCB(\MOD(\Q))$, the complex $\psi_{t}\DER{f}_{!}\Q_{\C^{n}}$ is isomorphic to $\RGC(f^{-1}(\epsilon);\Q)$
for a sufficiently small $\epsilon\in B(0,\eta)^{*}$,
where $\psi_{t}$ stands for the nearby cycle functor of the coordinate $t$ of $\C$.
Note that $\psi_{t}\DER{f}_{!}\Q_{\C^n}$ is the underlying constructible sheaf of 
the complex of mixed Hodge modules $\psi_{t}\DER{f}_{!}\Q^{H}_{\C^{n}}$.
Thus, each cohomology group $H^{k}_{c}(f^{-1}(\epsilon);\Q)$ of $\RGC(f^{-1}(\epsilon);\Q)(\simeq \psi_{t}\DER{f}_{!}\Q_{\C^{n}})$
has a mixed Hodge structure.
Moreover these mixed Hodge structures of $H^{k}_{c}(f^{-1}(\epsilon);\Q)$ coincide with the limit mixed Hodge structures introduced in Section~\ref{mhs}.

Let $X$ be an algebraic variety over $\C$.
Set $n:=\dim X$.
Then there is an object in $\MHM(X)$ whose underlying perverse sheaf is $\ICX$.
We denote it by $\ICXH$.
Note that ${a_{X}}_{*}\ICXH[-n]$ is a complex of mixed Hodge structures whose image by the functor $\RAT$ is $\RG(X;\ICX[-n])$.
Therefore, $IH^{k}(X;\Q)$ has a mixed Hodge structure.
The same thing holds also for $IH^{k}_{c}(X;\Q)$.
By the construction, the mixed Hodge module $\ICXH$ has a pure weight $n$.
Hence $\ICXH[-n]$ has a pure weight $0$.
According to Proposition~\ref{weight}, $\RG(X;\ICX[-n])$ has mixed weights $\geq 0$,
and we have $\GR^{W}_{r}IH^{k}(X;\Q)=0$ for $r<k$.
If $X$ is complete, we have ${a_{X}}_{*}={a_{X}}_{!}$,
and hence $\RG(X;\ICX[-n])$ has a pure weight $0$ by Proposition~\ref{weight}.
Therefore, we have $\GR^{W}_{r}IH^{k}(X;\Q)=0$ for $r\neq k$,
and $IH^{k}(X;\Q)$ has a pure Hodge structure of weight $k$.
Moreover, the virtual Poincar\'e polynomial of $\RG(X;\ICX[-n])$ is
explicitly written as follows:
\[\VP(\RG(X;\ICX[-n]))(T)=\sum_{k\in\Z}(-1)^{k}\dim IH^{k}(X;\Q) T^{k}.\]

Fix a point $p$ in $X$.
We denote by $i_{p}\colon \{p\}\hookrightarrow X$ the inclusion map.
Then the image of $i_{p}^{*}\ICXH[-n]$ by the functor $\RAT$ is $(\ICXT)_{p}$,
and has a mixed Hodge structure.
By Proposition~\ref{weight}, it has mixed weights $\leq 0$.
This property will play an important role in the proof of Theorem~\ref{mainth}.

\section{Milnor monodromies and IC stalks}
In this section, we recall some basic facts on Milnor monodromies and
the mixed Hodge structures of the stalks of intersection cohomology complexes.

\subsection{Milnor monodromies}\label{milmono}
For a natural number $n\geq2$, let $f\in\C[x_1,\dots,x_n]$ be a non-constant polynomial of $n$ variables with coefficients in $\C$ such that $f(0)=0$.
For any positive real number $r>0$ and any natural number $m\geq1$, we denote by $B(0,r)$ the open ball in $\C^{m}$ centered at $0$ with radius $r$.

\begin{theo}[Milnor~\cite{MIL}]\label{milnorfiber}
There exists $\epsilon>0$ such that for a sufficiently small $(\epsilon\gg)\eta>0$,
the restriction of $f$ 
\[f\colon B(0,\epsilon)\cap f^{-1}(B(0,\eta)^{*})\to B(0,\eta)^{*}\]
is a locally trivial fibration.
Moreover, if $0$ is an isolated singular point of $f^{-1}(0)$,
its fiber is homotopic to a bouquet of some $(n-1)$-spheres.
\end{theo}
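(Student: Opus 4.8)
The plan is to follow Milnor's original strategy, reducing at once to the germ of $f$ at $0$ (so that $f$ may be treated as a holomorphic map germ), and splitting the argument into two stages: construction of the fibration by transversality plus Ehresmann's theorem, and determination of the homotopy type of the fibre in the isolated-singularity case by Morse theory together with a connectivity bound.

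\emph{Stage 1 (local triviality).} First I would fix $\epsilon>0$ so small that for every $\epsilon'$ with $0<\epsilon'\leq\epsilon$ the sphere $S_{\epsilon'}=\partial B(0,\epsilon')$ is transverse to $f^{-1}(0)$ in the stratified sense, relative to a fixed Whitney stratification. Such an $\epsilon$ exists by the local conical structure of analytic sets: the values $\epsilon'$ for which transversality fails correspond to critical values of $x\mapsto|x|^{2}$ on the positive-dimensional strata of $f^{-1}(0)$, and the curve selection lemma shows these do not accumulate at $0$, so one may take $\epsilon$ below all of them. Next I would choose $\eta$ with $0<\eta\ll\epsilon$ so that (i) $f$ has no critical point in $\overline{B(0,\epsilon)}\cap f^{-1}(\overline{B(0,\eta)}\setminus\{0\})$, using that by Sard's theorem the critical values of the germ $f$ form a measure-zero analytic germ in $(\C,0)$ and hence reduce to $\{0\}$; and (ii) for every $c$ with $0<|c|\leq\eta$ the fibre $f^{-1}(c)$ is transverse to $S_{\epsilon}$, which follows from the openness of transversality, (i), and a compactness argument again powered by the curve selection lemma. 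With these choices, the restriction of $f$ to $\overline{B(0,\epsilon)}\cap f^{-1}(C_{\eta'})$, where $C_{\eta'}$ is the circle of radius $\eta'\leq\eta$, is a proper submersion of manifolds with boundary whose restriction to the boundary $S_{\epsilon}\cap f^{-1}(C_{\eta'})$ is again a submersion; Ehresmann's fibration theorem (in the form for manifolds with boundary) then gives local triviality over $C_{\eta'}$, and pulling back along the radial deformation retraction of $B(0,\eta)^{*}$ onto a circle — equivalently, integrating the manifestly nonsingular vector fields furnished by the absence of critical points of $f$ — upgrades this to local triviality of $f\colon B(0,\epsilon)\cap f^{-1}(B(0,\eta)^{*})\to B(0,\eta)^{*}$.

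\emph{Stage 2 (homotopy type of the fibre).} Assume now that $0$ is an isolated singular point of $f^{-1}(0)$, so that for $0<|c|\leq\eta$ the Milnor fibre $F:=\overline{B(0,\epsilon)}\cap f^{-1}(c)$, homotopy equivalent to the fibre of the fibration above, is a compact complex manifold with boundary of complex dimension $n-1$. Two inputs pin down its homotopy type. The first is a dimension bound: $F$ is diffeomorphic to the intersection of a smooth affine variety with a ball, so Morse theory for the strictly plurisubharmonic function $x\mapsto|x|^{2}$ shows it has the homotopy type of a finite CW complex of real dimension at most $n-1$. The second is Milnor's connectivity theorem: $F$ is $(n-2)$-connected. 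Granting both, Hurewicz gives $\widetilde{H}_{k}(F)=0$ for $k<n-1$ and $H_{n-1}(F)$ free of finite rank, and a finite $(n-2)$-connected CW complex of dimension $\leq n-1$ with free top homology is homotopy equivalent to a bouquet of $(n-1)$-spheres by elementary obstruction theory.

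\emph{Main obstacle.} The transversality statements and the use of Ehresmann's theorem are routine once the curve selection lemma is in hand, and the CW-dimension bound is classical; the real difficulty is the $(n-2)$-connectedness of $F$, which genuinely uses the complex structure (the analogous assertion fails for real-analytic maps). I would isolate it as a lemma and prove it in Milnor's way: realise the open Milnor fibre as the fibre of the fibration $f/|f|\colon S_{\epsilon}\setminus K\to S^{1}$ over the link $K$, observe that $K$ is $(n-3)$-connected, and control the connectivity of $S_{\epsilon}\setminus K$ and of the fibre through a careful analysis of the gradient flow of $\arg f$; alternatively, one can deduce it from the fact that $B(0,\epsilon)\setminus f^{-1}(0)$ is homotopy equivalent to the mapping torus of the geometric monodromy and is highly connected.
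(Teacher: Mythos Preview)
The paper does not prove this theorem at all: it is stated with attribution to Milnor~\cite{MIL} and used as a black box, with no argument given. So there is nothing to compare against on the paper's side.

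Your sketch is a faithful outline of Milnor's original proof and is essentially correct. A few minor remarks. In Stage~1, invoking Sard's theorem is unnecessary and slightly misleading: since $f$ is holomorphic, its critical locus is an analytic set and its image in $\C$ is a proper analytic subset, hence discrete; near $0$ the only possible critical value is $0$. In Stage~2, your identification of the $(n-2)$-connectedness of $F$ as the crux is exactly right; the cleanest route is not the $f/|f|$ fibration you mention first but rather Milnor's argument via the Morse function $|x|^{2}$ on the smooth affine fibre (or, more conceptually, the Lefschetz-type argument showing that a Stein manifold of complex dimension $n-1$ has the homotopy type of a CW complex of real dimension $\leq n-1$, together with the fact that the pair $(\overline{B(0,\epsilon)},F)$ is $(n-1)$-connected). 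Your alternative via the mapping torus is circular as stated, since the connectivity of $S_{\epsilon}\setminus K$ is usually deduced \emph{from} that of the fibre rather than the other way around.
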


This fibration is called the \textit{Milnor fibration} of $f$ at $0$ and its fiber $F_{0}$ is called the \textit{Milnor fiber of $f$ at $0$}.
We denote by $\mu$ the number of $(n-1)$-spheres in the bouquet which is homotopic to $F_{0}$ and call it the \textit{Milnor number}.
By this theorem, if $0$ is an isolated singular point of $f^{-1}(0)$, then for any $k\in\Z$, we have\begin{align*}
H^{k}(F_{0};\Q)\simeq\left\{\renewcommand{\arraystretch}{1.5}
\begin{array}{ll}
\Q&\text{ if\ \ }k=0,\\
\Q^\mu&\text{ if\ \ }k=n-1,\text{ and}\\
0&\text{ if\ \ }k\neq 0,n-1.
\end{array}\right.\renewcommand{\arraystretch}{1}
\end{align*}
Considering a path along a small circle around the origin in $\TR$,
we obtain an automorphism of $F_{0}$ called the \textit{geometric Milnor monodromy}.
It induces an automorphism
\[\Phi_{0,k}\colon H^{k}(F_0;\C)\overset{\sim}{\to} H^{k}(F_0;\C)\]
for any $k\in\Z$.
This automorphism is called the \textit{$k$-th Milnor monodromy} of $f$ at $0$.
In this paper, when $0$ is an isolated singular point of $f^{-1}(0)$,
we call $\Phi_{0,n-1}$ the \textit{Milnor monodromy} of $f$, for short.
The following well-known fact for $\Phi_{0,n-1}$ is called the \textit{monodromy theorem}.
\begin{theo}\label{month}
Assume that $0$ is an isolated singular point of $f^{-1}(0)$.
Then any eigenvalue of $\Phi_{0,n-1}$ is a root of unity.
Moreover, the maximal size of the Jordan blocks in $\Phi_{0,n-1}$ for the eigenvalue $\lambda\neq 1$ {\rm(resp}. $1${\rm)} is bounded by $n$ {\rm(resp}. $n-1${\rm)}.
\end{theo}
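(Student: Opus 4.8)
The plan is to reduce the statement to the general version of the monodromy theorem for a one-parameter degeneration, which I will obtain from the theory of limit mixed Hodge structures recalled in Section~3, together with the basic estimate on the length of the relative monodromy filtration. Concretely, I would argue as follows. Apply Theorem~\ref{milnorfiber} to produce the Milnor fibration of $f$ at $0$, so that the Milnor fiber $F_0$ has the cohomology described above; localizing at $0$, the relevant cohomology is $H^{n-1}(F_0;\C)$, which (by the nearby-cycle description) carries a limit mixed Hodge structure in the sense of Theorem~\ref{limitmixed}, and the monodromy $\Phi_{0,n-1}$ is precisely the monodromy automorphism $\Psi$ on this space. Decompose $\Phi_{0,n-1}=\Phi^s\Phi^u$ into semisimple and unipotent parts, and let $N=\log\Phi^u$ be the associated nilpotent operator.

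First I would prove that every eigenvalue $\lambda$ of $\Phi_{0,n-1}$ is a root of unity. This is the quasi-unipotence statement: it follows because the limit mixed Hodge structure is defined over $\Q$ and the monodromy acts compatibly with the $\Q$-structure, so its characteristic polynomial has integer coefficients, while the eigenvalues have absolute value $1$ (all Hodge-theoretic eigenvalues lie on the unit circle); by Kronecker's theorem an algebraic integer all of whose conjugates lie on the unit circle is a root of unity. Alternatively, and perhaps more in the spirit of the paper, one invokes the $\Q$-mixed Hodge module structure on $\psi_f\Q^H_{\C^n}$: the nearby cycle in $\MHM$ comes with a finite-order semisimple part by construction, which forces quasi-unipotence directly.

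Next I would bound the sizes of the Jordan blocks. Work on a single generalized eigenspace $H^{n-1}(F_0;\C)_\lambda$. By the third property of the relative monodromy filtration $M_\bullet$ recalled in Section~\ref{mhs} — the hard Lefschetz-type isomorphism $N^\ell\colon \GR^M_{w+\ell}\xrightarrow{\sim}\GR^M_{w-\ell}$ on each weight-graded piece — the maximal size of a Jordan block of $\Phi^u_{0,n-1}$ on that eigenspace equals one plus the length of the support of $M_\bullet$ there, i.e.\ one more than the number of steps over which $N$ can act nontrivially. So it suffices to bound the range of weights appearing in the limit mixed Hodge structure on $H^{n-1}(F_0;\C)_\lambda$. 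Here one uses that $H^{n-1}(F_0)$ sits in a short range of weights dictated by the dimension: for $\lambda\neq 1$ the relevant weights lie in $\{0,1,\dots,2(n-1)\}$ but are further constrained so that $N$ acts over at most $n$ consecutive steps, giving block size $\le n$; for $\lambda=1$ the unit eigenspace contains the image of $H^{n-1}$ of the ambient smooth locus, which shifts the weight window by one and cuts the available range by one, giving block size $\le n-1$. The cleanest way to pin down these weight windows is via a good compactification / resolution of $f^{-1}(0)$ and the Steenbrink spectral sequence, or equivalently by citing the weight estimates for $\psi_f$ and $\phi_f$ in mixed Hodge module theory (Proposition~\ref{weight} applied to $i_0^*$ and to the nearby/vanishing cycle functors).

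The main obstacle I anticipate is getting the $\lambda=1$ versus $\lambda\neq 1$ distinction exactly right, i.e.\ rigorously justifying why the unit eigenspace loses one step of the monodromy filtration compared to the others. The conceptual reason is that $\phi_f$ (vanishing cycles) rather than $\psi_f$ governs the $\lambda=1$ part, and the primitive/coprimitive decomposition together with the weight-monodromy relation on $\phi_{f,1}$ versus $\psi_{f,\neq 1}$ differ by a Tate twist; making this precise requires care with the normalization conventions for $N$ and the centering of $M_\bullet$. Since Theorem~\ref{month} is stated as \textbf{well-known}, in the write-up I would most likely not reprove it from scratch but rather cite the standard references for the monodromy theorem (Landman, Clemens, Katz, Schmid, Steenbrink) and indicate that in the mixed Hodge module framework it is a formal consequence of quasi-unipotence of $\psi_f\Q^H_{\C^n}$ together with the weight bounds recalled above; the argument sketched here is the content behind that citation.
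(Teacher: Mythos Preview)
The paper does not prove Theorem~\ref{month}: it is introduced as ``the following well-known fact'' and simply stated, with no argument given. So there is no paper proof to compare your proposal against, and your own closing remark---that in the write-up you would cite the standard references (Landman, Clemens, Katz, Schmid, Steenbrink) rather than reprove it---is exactly what the paper does.

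Your sketch is a reasonable outline of the Hodge-theoretic proof, but two points could be sharpened if you ever want to make it self-contained. First, the Kronecker argument for quasi-unipotence needs as input that all eigenvalues of the monodromy have absolute value~$1$; this is not automatic from the $\Q$-structure alone and is usually obtained either from Schmid's nilpotent orbit theorem or, as you note, is baked into the construction of $\psi_f$ in $\MHM$. Second, for the Jordan block bounds you invoke the relative monodromy filtration $M_\bullet$ from Section~\ref{mhs}, but that filtration lives on $H^{n-1}_c(f^{-1}(\epsilon))$, not on the Milnor fiber cohomology. For $H^{n-1}(F_0;\C)$ the relevant filtration is Steenbrink's weight filtration $W_\bullet$ of Section~\ref{milmono}, which \emph{is} the monodromy filtration, centered at $n-1$ on the $\lambda\neq 1$ part and at $n$ on the $\lambda=1$ part. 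The weight windows in Proposition~\ref{mtsansyou} ($[0,2(n-1)]$ for $\lambda\neq 1$, $[2,2(n-1)]$ for $\lambda=1$) then give exactly the bounds $n$ and $n-1$. Of course, citing Proposition~\ref{mtsansyou} to prove Theorem~\ref{month} would be somewhat circular historically, since Steenbrink's computation presupposes the monodromy theorem; but the weight-window mechanism you describe is indeed the conceptual content of the sharper bound for $\lambda=1$.
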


In what follows, we assume that $0$ is an isolated singular point of $f^{-1}(0)$.
Then by a theorem of Steenbrink~\cite{STEVAN},
$H^{n-1}(F_0;\Q)$ has a canonical mixed Hodge structure, called the \textit{limit mixed Hodge structure}.
Recall that we defined the nearby cycle functor $\psi_{f}$ in Section~\ref{ickaisetu}.
We denote by $i_{0}\colon \{0\}\hookrightarrow f^{-1}(0)$ the inclusion map.
One can show that for any integer $k\in\Z$, the $k$-th cohomology group of the complex of sheaves $i_{0}^{-1}\psi_{f}(\Q_{\C^{n}})$
is isomorphic to $H^{k}(F_{0};\Q)$.
Since $i_{0}^{-1}\psi_{f}(\Q_{\C^{n}})$ is the underlying complex of that of
mixed Hodge structures $i_{0}^{*}\psi_{f}(\Q_{\C^n}^{H})$,
the $\Q$-vector space $H^{k}(i_{0}^{*}\psi_{f}(\Q_{\C^n}^{H}))\simeq H^{k}(F_{0};\Q)$ has a canonical mixed Hodge structure for any $k\in\Z$.
These mixed Hodge structures coincide with Steenbrink's limit mixed Hodge structures.
We denote by $F^{\bullet}$ its Hodge filtration and $W_{\bullet}$ its weight filtration.
For $\lambda\in\C$, let $H^{n-1}(F_0;\C)_{\lambda}\subset H^{n-1}(F_{0};\C)$ be the generalized eigenspace of $\Phi_{0,n-1}$ for the eigenvalue $\lambda$
and set
\[h^{p,q}_{\lambda}(H^{n-1}(F_0;\C)):=\dim \GR^{p}_{F}\GR^{W}_{p+q}H^{n-1}(F_0;\C)_{\lambda}.\]
For these numbers, the following results are well-known (see Steenbrink~\cite{STEVAN}).

\begin{prop}\label{mtsansyou}
$(i)$~For $\lambda\in\C^{*}\setminus\{1\}$ and $(p,q)\notin[0,n-1]\times[0,n-1]$
we have $h^{p,q}_{\lambda}(H^{n-1}(F_0;\C)) = 0$.
For $(p,q)\in[0,n-1]\times[0,n-1]$ we have
\[h^{p,q}_{\lambda}(H^{n-1}(F_0;\C))=h^{n-1-q,n-1-p}_{\lambda}(H^{n-1}(F_0;\C)).\]
$(ii)$~For $(p,q)\notin[1,n-1]\times[1,n-1]$ we have $h^{p,q}_{1}(H^{n-1}(F_0;\C))=0$.
For $(p,q)\in[1,n-1]\times[1,n-1]$ we have
\[h^{p,q}_{1}(H^{n-1}(F_0;\C))=h^{n-q,n-p}_{1}(H^{n-1}(F_0;\C)).\]
\end{prop}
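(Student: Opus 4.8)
The plan is to obtain both parts from the description of Steenbrink's limit mixed Hodge structure on $H^{n-1}(F_0;\C)$ as a monodromy weight filtration, together with the fact that the logarithm $N$ of the unipotent part of the monodromy is a morphism of mixed Hodge structures of type $(-1,-1)$. \textbf{Step 1.} First I would recall (from Steenbrink's construction via a semistable reduction, or equivalently from M.\ Saito's formalism applied to $i_{0}^{*}\psi_{f}(\Q_{\C^n}^{H})$, the nearby cycle of the pure Hodge module $\Q_{\C^n}^{H}[n]$) that on the generalized eigenspace $H^{n-1}(F_0;\C)_{\lambda}$ the weight filtration $W_{\bullet}$ is the monodromy weight filtration $M(N)_{\bullet}$ of $N$, shifted so that it has center $w$, where $w=n-1$ if $\lambda\neq 1$ and $w=n$ if $\lambda=1$; and that the limit Hodge filtration $F^{\bullet}$ satisfies $\GR^{p}_{F}H^{n-1}(F_0;\C)_{\lambda}=0$ for $p\notin[0,n-1]$ when $\lambda\neq 1$, and for $p\notin[1,n-1]$ when $\lambda=1$. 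The shift of the center by one between the two cases reflects the difference between $\psi_{f}$ and $\phi_{f}$ on the unipotent part, and the sharper Hodge range for $\lambda=1$ amounts to the fact that the spectral numbers attached to the eigenvalue $1$ lie strictly between $0$ and $n$.

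\textbf{Step 2 (the symmetries).} Since $N\colon H^{n-1}(F_0)\to H^{n-1}(F_0)(-1)$ is a morphism of mixed Hodge structures, it satisfies $N(F^{p})\subset F^{p-1}$ and lowers $W_{\bullet}$ by $2$; and by the defining property of the monodromy weight filtration (cf.\ the properties of $M_{\bullet}$ recalled in Section~\ref{mhs}), for the center $w$ the iterate $N^{l}$ induces isomorphisms $\GR^{M}_{w+l}\xrightarrow{\ \sim\ }\GR^{M}_{w-l}$ on each eigenspace. A class contributing to $h^{p,q}_{\lambda}$ lies in weight $p+q$; applying $N^{\,p+q-w}$ sends it isomorphically to a class in weight $2w-p-q$ while lowering the Hodge index by $p+q-w$, hence contributing to $h^{\,w-q,\,w-p}_{\lambda}$. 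Therefore $h^{p,q}_{\lambda}=h^{\,w-q,\,w-p}_{\lambda}$, which for $w=n-1$ is the symmetry asserted in part (i) and for $w=n$ is that asserted in part (ii).

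\textbf{Step 3 (the ranges), and the expected obstacle.} By the bound on $F^{\bullet}$ from Step 1, $h^{p,q}_{\lambda}=0$ unless $p$ lies in $[0,n-1]$ (resp.\ in $[1,n-1]$ when $\lambda=1$); feeding this into the symmetry of Step 2 forces $w-q$ into the same interval, hence $q$ lies in $[0,n-1]$ (resp.\ in $[1,n-1]$) as well. This gives the vanishing outside $[0,n-1]\times[0,n-1]$ in (i) and outside $[1,n-1]\times[1,n-1]$ in (ii). The only genuinely substantial point is Step 1: pinning down the center of $W_{\bullet}$ (at $n-1$ versus $n$) and the sharp range of $F^{\bullet}$ on each eigenspace is precisely what requires Steenbrink's explicit construction of the limit mixed Hodge structure, while everything afterwards is bookkeeping with the monodromy weight filtration. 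If one wished to avoid the sharp Hodge bounds, the monodromy theorem (Theorem~\ref{month}), which bounds the Jordan block sizes by $n$ for $\lambda\neq 1$ and by $n-1$ for $\lambda=1$ and hence controls the spread of $M(N)$, can be combined with the coarse bound $\GR^{p}_{F}=0$ for $p\notin[0,n]$ to recover the ranges; but the route through the sharp bounds is cleaner.
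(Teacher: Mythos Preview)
The paper does not give a proof of this proposition at all; it is stated as well-known with a reference to Steenbrink~\cite{STEVAN}. Your sketch is a correct outline of the standard argument: the essential input is exactly what you isolate in Step~1 (that on $H^{n-1}(F_0;\C)_{\lambda}$ the weight filtration is the monodromy weight filtration centered at $w=n-1$ for $\lambda\neq 1$ and $w=n$ for $\lambda=1$, together with the sharp Hodge range for $F^{\bullet}$), and Steps~2--3 are then just the formal consequences of $N$ being of type $(-1,-1)$ and $N^{l}\colon\GR^{W}_{w+l}\xrightarrow{\sim}\GR^{W}_{w-l}(-l)$. One small remark on Step~2: as written you tacitly assume $p+q\geq w$ so that $N^{p+q-w}$ makes sense; for $p+q<w$ you should invoke the inverse of the same isomorphism (or simply note that the identity $h^{p,q}_{\lambda}=h^{w-q,w-p}_{\lambda}$ is symmetric under swapping the two sides), but this is cosmetic.
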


\begin{prop}\label{lmhtojordan}
$(i)$~For $\lambda\in\C^{*}\setminus\{1\},s\geq 1$,
the number of the Jordan blocks in $\Phi_{0,n-1}$ with size $\geq s$ for the eigenvalue $\lambda$ is equal to
\[\sum_{p+q=n-2+s,n-1+s}h^{p,q}_{\lambda}(H^{n-1}(F_0;\C)).\]
$(ii)$~For $s\geq 1$, 
the number of the Jordan blocks in $\Phi_{0,n-1}$ with size $\geq s$ for the eigenvalue $1$ is equal to
\[\sum_{p+q=n-1+s,n+s}h^{p,q}_{1}(H^{n-1}(F_0;\C)).\]
\end{prop}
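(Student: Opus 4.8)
The plan is to deduce the proposition from a standard piece of linear algebra about nilpotent operators, together with Steenbrink's description of the weight filtration on the limit mixed Hodge structure of $H^{n-1}(F_0;\C)$.

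First I would reduce the statement to counting Jordan blocks of a single nilpotent operator. Write $\Phi_{0,n-1}=\Phi^{s}\Phi^{u}$ for the decomposition into semisimple and unipotent parts and put $N:=\log\Phi^{u}$. Then $N$ preserves each generalized eigenspace $H^{n-1}(F_0;\C)_{\lambda}$, on which $\Phi_{0,n-1}$ acts as $\lambda\exp(N)$; consequently the Jordan blocks of $\Phi_{0,n-1}$ for the eigenvalue $\lambda$ are in size-preserving bijection with the Jordan blocks of $N$ on $H^{n-1}(F_0;\C)_{\lambda}$. So it suffices to express, for each $\lambda$, the number of Jordan blocks of $N|_{H^{n-1}(F_0;\C)_{\lambda}}$ of size $\geq s$ in terms of the weight filtration.

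The linear-algebra input is the following. Let $V$ be a finite-dimensional $\C$-vector space with a nilpotent endomorphism $N$, let $W_{\bullet}$ be the monodromy weight filtration of $N$ centered at an integer $c$ (so $N(W_{\ell})\subset W_{\ell-2}$ and $N^{j}\colon\GR^{W}_{c+j}V\xrightarrow{\ \sim\ }\GR^{W}_{c-j}V$ for all $j\geq0$), and let $b_{\geq s}(N)$ be the number of Jordan blocks of $N$ of size $\geq s$. Setting $P_{c+m}:=\ker\bigl(N^{m+1}\colon\GR^{W}_{c+m}V\to\GR^{W}_{c-m-2}V\bigr)$ for $m\geq0$, the primitive Lefschetz decomposition gives $\dim\GR^{W}_{c+k}V=\sum_{j\geq0}\dim P_{c+k+2j}$ for $k\geq0$ and identifies $\dim P_{c+m}$ with the number of Jordan blocks of $N$ of size exactly $m+1$; summing the identities for $k=s-1$ and $k=s$ telescopes to
\[b_{\geq s}(N)=\dim\GR^{W}_{c+s-1}V+\dim\GR^{W}_{c+s}V \qquad(s\geq1).\]
(This is the point-wise counterpart of the formula for $\GR^{W}_{r}H^{k}_{c}$ recalled in Section~\ref{mhs}.)

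Finally I would insert the Hodge-theoretic facts. By Steenbrink's construction of the limit mixed Hodge structure, the eigenspace decomposition of $H^{n-1}(F_0;\C)$ is compatible with $W_{\bullet}$, and the restriction of $W_{\bullet}$ to $H^{n-1}(F_0;\C)_{\lambda}$ is precisely the monodromy weight filtration of $N$, centered at $n-1$ when $\lambda\neq1$ and at $n$ when $\lambda=1$ --- this centering is exactly what is reflected in the symmetries of Proposition~\ref{mtsansyou}. Moreover, by the definition of the numbers $h^{p,q}_{\lambda}$ and the Hodge decomposition of the pure Hodge structure $\GR^{W}_{w}$, one has $\dim\GR^{W}_{w}H^{n-1}(F_0;\C)_{\lambda}=\sum_{p+q=w}h^{p,q}_{\lambda}(H^{n-1}(F_0;\C))$ for every $w\in\Z$. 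Applying the lemma with $V=H^{n-1}(F_0;\C)_{\lambda}$, $c=n-1$ (resp.\ $c=n$), and reading off the weights $w=c+s-1$ and $w=c+s$, produces exactly the two displayed equalities in part~(i) (resp.\ part~(ii)). The only substantive ingredient is Steenbrink's structure theorem, which I would cite as a black box; the one delicate point is to get the $\pm1$ discrepancy in the centering right between the cases $\lambda\neq1$ and $\lambda=1$, since this is precisely what separates the index sets $p+q=n-2+s,\,n-1+s$ and $p+q=n-1+s,\,n+s$ --- and that discrepancy is pinned down by, and consistent with, Proposition~\ref{mtsansyou}.
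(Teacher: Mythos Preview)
Your argument is correct and is the standard derivation of this fact from the Steenbrink description of the weight filtration. The paper itself does not give a proof of this proposition: it is stated as a well-known consequence of Steenbrink's work (with a reference to \cite{STEVAN}), so there is no ``paper's proof'' to compare against. Your linear-algebra lemma (that for the monodromy weight filtration of a nilpotent $N$ centered at $c$ one has $b_{\geq s}(N)=\dim\GR^{W}_{c+s-1}V+\dim\GR^{W}_{c+s}V$) together with the identification of $W_{\bullet}$ on $H^{n-1}(F_{0};\C)_{\lambda}$ with the monodromy weight filtration centered at $n-1$ (for $\lambda\neq1$) or $n$ (for $\lambda=1$) is exactly the content one extracts from Steenbrink, and your handling of the centering shift is correct.
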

In what follows, for any $\lambda\in\C^*$ and $s\in\Z$, we denote by $J^{\lambda}_{s}$ the number of the Jordan blocks in $\Phi_{0,n-1}$ with size $s$ for the eigenvalue $\lambda$.
By the monodrmy theorem, if $\lambda\neq 1$ (resp. $\lambda=1$)
we have $J^{\lambda}_{s}=0$ for any $s\geq n+1$ (resp. $s\geq n$).

By Propositions~\ref{mtsansyou} and \ref{lmhtojordan}, we can describe the dimension of each graded piece $\GR^{W}_{r}H^{n-1}(F_{0};\C)_{\lambda}$
of $H^{n-1}(F_{0};\C)_{\lambda}$ with respect to the weight filtration as follows.
\begin{prop}\label{milnohyou}
$(i)$~For $\lambda\in\C^{*}\setminus\{1\}$ and $r<0$ or $2(n-1)<r$, we have $\GR^{W}_{r}H^{n-1}(F_{0};\C)_{\lambda}=0$.
For $0\leq r \leq 2(n-1)$, we have the symmetry
\[\dim \GR^{W}_{2(n-1)-r}H^{n-1}(F_{0};\C)_{\lambda}=\dim \GR^{W}_{r}H^{n-1}(F_{0};\C)_{\lambda},\]
centered at degree $n-1$.
Moreover, for $0\leq r \leq n-1$, we have
\[\dim \GR^{W}_{r}H^{n-1}(F_{0};\C)_{\lambda} = \sum_{s\geq0}J^{\lambda}_{n-r+2s}.\]

$(ii)$~For $r<2$ or $2(n-1)<r$, we have $\GR^{W}_{r}H^{n-1}(F_{0};\C)_{1}=0$.
For $0\leq r \leq 2(n-1)$, we have the symmetry
\[\dim \GR^{W}_{2(n-1)-r}H^{n-1}(F_{0};\C)_{1}=\dim^{W}_{r}H^{n-1}(F_{0};\C)_{1},\]
centered at degree $n$.
Moreover, for $2\leq r \leq n$, we have 
\[\dim \GR^{W}_{r}H^{n-1}(F_{0};\C)_{1} = \sum_{s\geq0}J^{1}_{n+1-r+2s}.\]

\end{prop}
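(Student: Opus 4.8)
The plan is to deduce all three assertions formally from Propositions~\ref{mtsansyou} and~\ref{lmhtojordan}, using the elementary remark that the limit Hodge filtration $F^{\bullet}$ induces a filtration on each graded piece $\GR^{W}_{r}H^{n-1}(F_{0};\C)_{\lambda}$, so that for every $r\in\Z$ and every eigenvalue $\lambda$,
\[
\dim\GR^{W}_{r}H^{n-1}(F_{0};\C)_{\lambda}
=\sum_{p+q=r}\dim\GR^{p}_{F}\GR^{W}_{r}H^{n-1}(F_{0};\C)_{\lambda}
=\sum_{p+q=r}h^{p,q}_{\lambda}(H^{n-1}(F_{0};\C)).
\]
Fixing $\lambda$, write $b_{r}$ for this quantity; everything below is read off from this identity. (Conceptually the underlying fact is that $W_{\bullet}$ restricted to $H^{n-1}(F_{0};\C)_{\lambda}$ is the monodromy weight filtration centered at $n-1$ for $\lambda\neq1$ and at $n$ for $\lambda=1$, but I would not need this directly.)

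First I would dispose of the vanishing and the symmetry. In case~$(i)$, Proposition~\ref{mtsansyou}~$(i)$ gives $h^{p,q}_{\lambda}=0$ unless $(p,q)\in[0,n-1]^{2}$, so the above sum is empty, hence $b_{r}=0$, whenever $r<0$ or $r>2(n-1)$; and $b_{2(n-1)-r}=b_{r}$ follows by applying the involution $(p,q)\mapsto(n-1-q,n-1-p)$ of $[0,n-1]^{2}$ to the summation index and invoking $h^{p,q}_{\lambda}=h^{n-1-q,n-1-p}_{\lambda}$ (which in fact holds for all $(p,q)$, both sides being $0$ off $[0,n-1]^{2}$). Case~$(ii)$ is the same computation with $[1,n-1]^{2}$ in place of $[0,n-1]^{2}$, hence $b_{r}=0$ for $r<2$ or $r>2(n-1)$, and with the reflection $(p,q)\mapsto(n-q,n-p)$ about the line $p+q=n$, which yields the symmetry centered at~$n$.

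Next comes the counting, which is a telescoping argument. By the displayed identity, Proposition~\ref{lmhtojordan}~$(i)$ says the number of Jordan blocks of size $\geq s$ for the eigenvalue $\lambda$ equals $b_{n-2+s}+b_{n-1+s}$; subtracting the analogous expression for $s+1$ gives $J^{\lambda}_{s}=b_{n+s-2}-b_{n+s}$, and the symmetry from the previous step rewrites this as $J^{\lambda}_{s}=b_{n-s}-b_{n-s-2}$ for $s\geq1$. Then for $0\leq r\leq n-1$, putting $s_{0}=n-r\geq1$ and using $b_{w}=0$ for $w<0$, the telescoping sum collapses:
\[
\sum_{t\geq0}J^{\lambda}_{n-r+2t}
=\sum_{t\geq0}\bigl(b_{n-s_{0}-2t}-b_{n-s_{0}-2(t+1)}\bigr)
=b_{n-s_{0}}=b_{r},
\]
which is the claim of~$(i)$. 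For~$(ii)$ the identical chain of steps, now with $\sum_{p+q=n-1+s,\,n+s}h^{p,q}_{1}$ from Proposition~\ref{lmhtojordan}~$(ii)$ and the symmetry centered at~$n$, produces $J^{1}_{s}=b_{n+1-s}-b_{n-1-s}$ for $s\geq1$; telescoping from $s_{0}=n+1-r$, which is legitimate for $2\leq r\leq n$, gives $\sum_{t\geq0}J^{1}_{n+1-r+2t}=b_{r}$.

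The main obstacle here is not conceptual but organizational: one must keep the two centers of symmetry straight ($n-1$ for $\lambda\neq1$, $n$ for $\lambda=1$) and track the corresponding parity shifts so that the telescoping indices line up correctly. Once Proposition~\ref{mtsansyou} (support and symmetry of the Hodge numbers $h^{p,q}_{\lambda}$) and Proposition~\ref{lmhtojordan} (the Jordan-block counts) are in hand, no further input about the limit mixed Hodge structure is required.
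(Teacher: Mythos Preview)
Your proposal is correct and is exactly the route the paper intends: the paper gives no explicit proof of this proposition, stating only that it follows from Propositions~\ref{mtsansyou} and~\ref{lmhtojordan}, and your argument (support and symmetry of the $h^{p,q}_{\lambda}$ giving the vanishing and the reflection, then differencing the Jordan-block counts and telescoping) is precisely how one fills in those details. One minor remark: in part~(ii) the displayed symmetry in the statement reads $2(n-1)-r$, but as your computation and Table~\ref{tab2} both show, the correct reflection is $r\mapsto 2n-r$; this is a typo in the statement rather than an issue with your proof.
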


By Proposition~\ref{milnohyou} we have the following table for the dimensions of the graded pieces of $H^{n-1}(F_{0};\C)_{\lambda}$.

{\begin{table}[!hbt]\raggedright
\begin{flushleft}
\begin{tabular}{c|cccccc}
$r$&$0$&$1$&$2$&$3$&$\cdots$\\ \hline
$\dim \GR^{W}_{r}H^{n-1}(F_0)_{\lambda}$& $J^{\lambda}_{n}$ & $J^{\lambda}_{n-1}$ & $J^{\lambda}_{n-2}+J^{\lambda}_{n}$ & $J^{\lambda}_{n-3}+J^{\lambda}_{n-1}$
&$\cdots$
\end{tabular}
\end{flushleft}
\begin{flushright}
\begin{tabular}{cccccc}
$n-2$&$n-1$&$n$&$\cdots$&$2(n-1)$\\ \hline
$J^{\lambda}_{2}+J^{\lambda}_{4}+\cdots$&
$J^{\lambda}_{1}+J^{\lambda}_{3}+\cdots$&
$J^{\lambda}_{2}+J^{\lambda}_{4}+\cdots$&$\cdots$& $J^{\lambda}_{n} $

\end{tabular}
\end{flushright}
\caption{The case $\lambda\neq 1$}\label{tab1}
\end{table}
\begin{table}[!hbt]
\begin{flushleft}
\begin{tabular}{c|cccccc}
$r$&$2$&$3$&$4$&$5$&$\cdots$\\ \hline
$\dim \GR^{W}_{r}H^{n-1}(F_0)_{1}$& $J^{1}_{n-1}$ & $J^{1}_{n-2}$ & $J^{1}_{n-3}+J^{1}_{n-1}$ & $J^{1}_{n-4}+J^{1}_{n-2}$
&$\cdots$
\end{tabular}
\end{flushleft}
\begin{flushright}
\begin{tabular}{cccccc}
$n-1$&$n$&$n+1$&$\cdots$&$2(n-1)$\\ \hline
$J^{1}_{2}+J^{1}_{4}+\cdots$&
$J^{1}_{1}+J^{1}_{3}+\cdots$&
$J^{1}_{2}+J^{1}_{4}+\cdots$&$\cdots$& $J^{1}_{n-1} $
\end{tabular}
\end{flushright}
\caption{The case $\lambda=1$}\label{tab2}
\end{table}}

Next, in order to introduce some results for the numbers of the Jordan blocks in $\Phi_{0,n-1}$, we recall some familiar notions about polynomials.
\begin{defi}\label{newtonpoly}
Let $f=\sum_{\alpha\in\Z^n}a_{\alpha}x^{\alpha}\in\C[x^{\pm}_{1},\dots,x^{\pm}_{n}]$ be a Laurent polynomial with coefficients in $\C$.
We call the convex hull of $\supp(f)=\{\alpha\in\Z^n_{\geq0}\ |\ a_{\alpha}\neq 0\}$ the \textit{Newton polytope} of $f$ and denote it by $\NP(f)$.
\end{defi}

\begin{defi}\label{convenient2}
Let $f=\sum_{\alpha\in\Z^n}a_{\alpha}x^{\alpha}\in\C[x_{1},\dots,x_{n}]$ be a polynomial with coefficients in $\C$ such that $f(0)=0$.
\begin{enumerate}
\item We call the convex hull of $\bigcup_{\alpha\in \supp{(f)}}\{\alpha+\R^n_{\geq 0}\}$ the \textit{Newton polyhedron} of $f$ at the origin $0\in\C^{n}$ and denote it by $\Gamma_{+}(f)$.
\item We call the union of all bounded face of $\Gamma_{+}(f)$ the \textit{Newton boundary} of $f$ and denote it by $\Gamma_{f}$.
\item We say that $f$ is \textit{convenient} if $\Gamma_{+}(f)$ intersects
the positive part of each coordinate axis of $\R^{n}$.
\end{enumerate}
\end{defi}

For a Laurent polynomial $f=\sum_{\alpha\in\Z^n}a_{\alpha}x^{\alpha}$ and
a polytope $F$ in $\R^{n}$, set $f_{F}:=\sum_{\alpha\in F}a_{\alpha}x^{\alpha}$.

\begin{defi}\label{nondeg}
Let $f=\sum_{\alpha\in\Z^n}a_{\alpha}x^{\alpha}\in\C[x^{\pm}_{1},\dots,x^{\pm}_{n}]$ be a Laurent polynomial with coefficients in $\C$.
We say that $f$ is \textit{non-degenerate}
if for any face $F$ of $\NP(f)$
the hypersurface $\{x\in(\TR)^n\ |\ f_{F}(x)=0\}$ of $(\TR)^{n}$ is smooth and reduced.
\end{defi}

\begin{defi}\label{nondegatzero}
Let $f=\sum_{\alpha\in\Z_{\geq 0}^n}a_{\alpha}x^{\alpha}\in\C[x_{1},\dots,x_{n}]$ be a polynomial with coefficients in $\C$ such that $f(0)=0$.
We say that $f$ is \textit{non-degenerate at $0\in\C^{n}$}
if for any face $F$ of $\Gamma_{f}$, the hypersurface $\{x\in(\TR)^n\ |\ f_{F}(x)=0\}$ of $(\TR)^{n}$ is smooth and reduced.
\end{defi}

Let $f=\sum_{\alpha\in\Z_{\geq 0}^n}a_{\alpha}x^{\alpha}\in\C[x_{1},\dots,x_{n}]$ be a polynomial with coefficients in $\C$ such that $f(0)=0$.
Let $q_1,...,q_l\in\Z^n$ be the vertices in $\Gamma_{f}\cap \INT(\R^n_{\geq 0})$.
We denote by $d_i\in\Z_{>0}$ the lattice distance between $q_i$ and $0\in\R^n$,
and by $\Pi_{f}$ the number of lattice points in the union of one dimensional faces of $\Gamma_{f}\cap \INT(\R^n_{\geq0})$.

\begin{theo}[\cite{DoornSteen}, \cite{MT}]\label{mtjordan}
Assume that $f$ is convenient and non-degenerate at $0\in\C^n$.
Then we have
\begin{enumerate}
\item for $\lambda\in\TR\setminus\{1\}$, $J^{\lambda}_{n}=\#\{q_{i}\ |\ \lambda^{d_{i}}=1\}$, and
\item $J^{1}_{n-1}=\Pi_{f}$.
\end{enumerate}
\end{theo}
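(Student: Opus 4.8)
The plan is to reduce both assertions, via Proposition~\ref{milnohyou}, to the computation of a single extreme graded piece of the limit mixed Hodge structure on $H^{n-1}(F_0;\C)$, and then to extract that piece from the Newton-polyhedron combinatorics by way of the Denef--Loeser description of the motivic Milnor fibre (the tool underlying \cite{MT}). By Proposition~\ref{milnohyou}(i), for $\lambda\in\C^{*}\setminus\{1\}$ we get $\dim\GR^{W}_{0}H^{n-1}(F_0;\C)_{\lambda}=\sum_{s\geq 0}J^{\lambda}_{n+2s}=J^{\lambda}_{n}$ (the higher terms vanish by the monodromy theorem), and by the symmetry in that proposition this also equals $\dim\GR^{W}_{2(n-1)}H^{n-1}(F_0;\C)_{\lambda}$. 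By Proposition~\ref{milnohyou}(ii), $\dim\GR^{W}_{2}H^{n-1}(F_0;\C)_{1}=\sum_{s\geq 0}J^{1}_{n-1+2s}=J^{1}_{n-1}$. Thus it suffices to prove
\begin{align*}
\dim\GR^{W}_{2(n-1)}H^{n-1}(F_0;\C)_{\lambda}&=\#\{q_{i}\mid\lambda^{d_{i}}=1\}\qquad(\lambda\neq 1),\\
\dim\GR^{W}_{2}H^{n-1}(F_0;\C)_{1}&=\Pi_{f}.
\end{align*}

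Since $f$ is convenient and non-degenerate at $0$, the local motivic Milnor fibre $\mathcal{S}_{f,0}$, together with its $\hat{\mu}$-action, is given by the Denef--Loeser formula as an explicit alternating sum over the compact faces $\gamma$ of the Newton polyhedron $\Gamma_{+}(f)$ --- equivalently, over the faces of the Newton boundary $\Gamma_{f}$ --- the contribution of a face $\gamma$ being a Tate twist of the class of the smooth affine hypersurface $Z_{\gamma}=\{x\in(\C^{*})^{n}\mid f_{\gamma}(x)=1\}$ with a cyclic $\mu_{m_{\gamma}}$-action, where $m_{\gamma}$ records the lattice data of $\gamma$; for a vertex $q_{i}$ one has $m_{q_{i}}=d_{i}$. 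Taking the equivariant Hodge--Deligne realization and then setting the two Hodge variables equal (so as to record weights) turns the two left-hand sides above into explicit coefficients of a polynomial assembled from the mixed Hodge numbers of the $Z_{\gamma}$, which are in turn combinatorial invariants of the faces (Danilov--Khovanskii). It then remains to single out which faces feed the extreme weight in each eigenspace and to carry out the count.

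For $\lambda\neq 1$ the eigenvalue-$\lambda$ part of $H^{n-1}(F_0;\C)$ is concentrated in weights $0\le r\le 2(n-1)$, and the top weight $2(n-1)$ is fed only by the $0$-dimensional faces $q_{i}$ of $\Gamma_{f}$ lying in $\INT(\R^{n}_{\geq 0})$: for such a $q_{i}$ the polynomial $f_{q_{i}}$ is a monomial, so $Z_{q_{i}}$ is the finite set $\mu_{d_{i}}$ with its tautological action, contributing $1$ to the $\lambda$-eigenspace exactly when $\lambda^{d_{i}}=1$ and nothing otherwise (the vertices of $\Gamma_{f}$ on the coordinate hyperplanes, which exist by convenience, are absorbed elsewhere in the formula and do not feed the top degree at $0$). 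Summing gives $\dim\GR^{W}_{2(n-1)}H^{n-1}(F_0;\C)_{\lambda}=\#\{q_{i}\mid\lambda^{d_{i}}=1\}$, i.e.\ assertion (i). For $\lambda=1$ the invariant part of $H^{n-1}(F_0;\C)$ has no weight $<2$ by Proposition~\ref{milnohyou}(ii), and its weight-$2$ piece is fed only by the $1$-dimensional faces $e$ of $\Gamma_{f}$ contained in $\INT(\R^{n}_{\geq 0})$, through the weight-$2$ part of the $\mu_{m_{e}}$-invariant cohomology of $Z_{e}$; a direct count shows that this contributes the interior lattice points of $e$, while the endpoints of the $e$'s supply the remaining lattice points, each point shared by two edges being counted once. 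Summing over all interior edges produces precisely the number $\Pi_{f}$ of lattice points in their union, i.e.\ assertion (ii).

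The delicate part is the second count. One must check that compact faces of dimension $\geq 2$ contribute nothing to the weight-$2$, eigenvalue-$1$ coefficient, and --- the real point --- that the lattice points at the shared endpoints of the interior edges are apportioned among the edge contributions without being double counted; this is exactly why the answer is the number of lattice points in the \emph{union} of these edges and not the sum of their lattice lengths, and arranging that the endpoint corrections telescope to $\Pi_{f}$ is the heart of the matter. The cleanest way to keep the bookkeeping honest is to compute the full equivariant Hodge--Deligne polynomial of $\mathcal{S}_{f,0}$ from the Denef--Loeser and Danilov--Khovanskii formulas and read off the lowest-weight coefficient of each eigenspace. Alternatively one can avoid the motivic machinery and follow van Doorn--Steenbrink: for a convenient non-degenerate singularity the spectral pairs are given by the Steenbrink--Varchenko weighted count of lattice points in the cone over $\Gamma_{f}$, and the Jordan blocks of maximal size for each eigenvalue are read off directly from them, the same combinatorial identity for $\Pi_{f}$ reappearing there.
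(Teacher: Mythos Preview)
The paper does not prove this theorem: it is quoted from the literature (van Doorn--Steenbrink and Matsui--Takeuchi) and no argument is given beyond the citation. So there is nothing in the paper to compare your proposal against line by line.

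That said, your outline is in the spirit of the cited references. The reduction via Proposition~\ref{milnohyou} to the extreme weight pieces $\GR^{W}_{0}H^{n-1}(F_0;\C)_{\lambda}$ (for $\lambda\neq 1$) and $\GR^{W}_{2}H^{n-1}(F_0;\C)_{1}$ is correct and is exactly how \cite{MT} begins. The subsequent strategy --- realize the equivariant Hodge--Deligne polynomial of the local motivic Milnor fibre via the Denef--Loeser formula, then feed in Danilov--Khovanskii for the pieces $Z_{\gamma}$ --- is also the Matsui--Takeuchi approach.

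Where your write-up is weakest is precisely where you flag it. For (i) you assert that only interior vertices feed the top weight and that boundary vertices ``are absorbed elsewhere''; this is true but is not a triviality --- in \cite{MT} it comes out of an honest bookkeeping with the signs and Tate twists in the Denef--Loeser sum, together with the Cayley-trick/Danilov--Khovanskii formulas, not from a one-line remark. For (ii) you correctly identify that higher-dimensional faces contribute nothing to weight $2$ in the unipotent part and that the edge contributions must be combined so that shared endpoints are not double counted, but you do not actually carry this out; in \cite{MT} this is handled by an explicit computation of the relevant coefficient, and in \cite{DoornSteen} by the spectral-pairs formula you allude to at the end. As written, your argument is a correct roadmap but not yet a proof: the two ``delicate'' cancellations you name are the entire content of the theorem, and you have described rather than verified them.
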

For a related result, see Raibaut~\cite{Raibaut}.

The next theorem was proved by Matsui-Takeuchi \cite{MT}.
\begin{theo}[\cite{MT}]\label{jordangammadekimaru}
If $f$ is convenient and non-degenerate at $0$,
all the numbers $J^{\lambda}_{s}$ are determined from the Newton boundary $\Gamma_{f}$ of $f$.
\end{theo}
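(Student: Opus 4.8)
The plan is to reduce the statement to the assertion that the limit mixed Hodge structure on $H^{n-1}(F_{0};\C)$, together with the action of the Milnor monodromy $\Phi_{0,n-1}$ on it, is determined by $\Gamma_{f}$. Concretely, by Propositions~\ref{mtsansyou} and \ref{lmhtojordan} each number $J^{\lambda}_{s}$ is an explicit $\Z$-linear combination of the equivariant Hodge numbers $h^{p,q}_{\lambda}(H^{n-1}(F_{0};\C))=\dim\GR^{p}_{F}\GR^{W}_{p+q}H^{n-1}(F_{0};\C)_{\lambda}$: Proposition~\ref{lmhtojordan} recovers the number of Jordan blocks of size $\geq s$, and successive differences then give the $J^{\lambda}_{s}$ themselves (compare the tables following Proposition~\ref{milnohyou}). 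Hence it suffices to prove that the numbers $h^{p,q}_{\lambda}(H^{n-1}(F_{0};\C))$ depend only on $\Gamma_{f}$ and not on the individual coefficients of $f$. Since $H^{k}(F_{0};\C)$ vanishes for $k\neq 0,n-1$ and $H^{0}(F_{0};\C)$ is one-dimensional with trivial monodromy, this is equivalent to the statement that the equivariant Hodge--Deligne $E$-function of the Milnor fiber $F_{0}$, equipped with its limit mixed Hodge structure and monodromy, is a combinatorial invariant of $\Gamma_{f}$.

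Next I would construct a Newton modification. Because $f$ is convenient, $\Gamma_{f}$ determines the Newton polyhedron $\Gamma_{+}(f)$ and hence its normal fan; let $\Sigma$ be a smooth subdivision of that fan compatible with the coordinate hyperplanes, chosen so that its combinatorial type depends only on $\Gamma_{f}$, and let $\pi\colon X_{\Sigma}\to\C^{n}$ be the associated proper toric birational morphism. Since $f$ is non-degenerate at $0$ (Definition~\ref{nondegatzero}), the pullback $f\circ\pi$ is, near $\pi^{-1}(0)$, a normal crossing divisor whose components are the toric divisors $D_{\rho}$ attached to the rays $\rho$ of $\Sigma$, with multiplicities $m_{\rho}=\min_{\alpha\in\Gamma_{f}}\langle\rho,\alpha\rangle$ read off from $\Gamma_{f}$, together with the strict transform $\widetilde{V}$; moreover $\widetilde{V}$ meets the toric strata transversally along the non-degenerate affine hypersurfaces $Z_{F}=\{f_{F}=0\}\subset(\TR)^{\dim F}$ indexed by the faces $F$ of $\Gamma_{f}$ (here one uses that $f$ is reduced on each face, cf.\ Definition~\ref{nondeg}).

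I would then compute the nearby cycle $\psi_{f}\Q_{\C^{n}}^{H}$ --- equivalently the class of the motivic Milnor fiber at $0$ in a suitable equivariant Grothendieck ring of varieties --- from this normal crossing model, using the A'Campo--Denef--Loeser type formula in the form employed by Stapledon~\cite{SFM} (building on Guibert--Loeser--Merle~\cite{GLM} and Matsui--Takeuchi~\cite{MT}). That formula expresses the motivic Milnor fiber as a universal combination of (i) purely combinatorial data of $\Sigma$ --- the multiplicities $m_{\rho}$ and the face poset of $\Gamma_{f}$ --- and (ii) the classes of the varieties $Z_{F}$ with their torus actions. Applying the equivariant Hodge--Deligne realization and invoking the theorem of Danilov--Khovanskii that the Hodge--Deligne numbers of a non-degenerate affine hypersurface $Z_{F}\subset(\TR)^{\dim F}$ depend only on the polytope $F$, one concludes that every $h^{p,q}_{\lambda}(H^{n-1}(F_{0};\C))$ is a function of $\Gamma_{f}$ alone. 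Combined with the reduction of the first paragraph, this proves the theorem.

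The main obstacle, I expect, will not be the toric geometry but the control of the two auxiliary filtrations carried by the nearby cycles. Tracking the eigenvalue decomposition of the monodromy through $\pi$ forces one to check that the multiplicities $m_{\rho}$ interact correctly with reducedness along the faces; and the passage from the $h^{p,q}_{\lambda}$ to the Jordan block counts in Proposition~\ref{lmhtojordan} depends on the position of $(p,q)$ relative to the line $p+q=n-1$ (for $\lambda\neq 1$) or $p+q=n$ (for $\lambda=1$), i.e.\ on the precise normalization of the weight (relative monodromy) filtration on $\psi_{f}$. Showing that this normalization is itself combinatorial requires either the theory of mixed Hodge modules on toric varieties or Steenbrink's spectral sequence together with the hard Lefschetz property for the logarithm of the unipotent part of the monodromy; and the Danilov--Khovanskii independence statement, the other essential input, itself rests crucially on non-degeneracy.
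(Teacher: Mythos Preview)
The paper does not give its own proof of this theorem: it is quoted from Matsui--Takeuchi~\cite{MT} and used as a black box in the proof of Theorem~\ref{mainth}. So there is no ``paper's proof'' to compare against; what you have written is, in outline, precisely the argument of~\cite{MT} that the paper is citing.

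Your reduction via Propositions~\ref{mtsansyou} and~\ref{lmhtojordan} to the equivariant Hodge numbers $h^{p,q}_{\lambda}(H^{n-1}(F_{0};\C))$, the toric (Newton) modification, the Denef--Loeser/A'Campo formula for the motivic Milnor fiber on a normal-crossing model, and the appeal to Danilov--Khovanski\u{\i} for the combinatorial nature of the Hodge--Deligne numbers of non-degenerate torus hypersurfaces --- this is exactly the strategy carried out in~\cite{MT}. One small inaccuracy of emphasis: the monodromy eigenvalue decomposition in the motivic formula is not encoded by ``torus actions on the $Z_{F}$'' but by finite cyclic covers of the $Z_{F}$ governed by the lattice distances (the integers $m_{\rho}$ you introduce); tracking the $\mu_{m}$-action on these covers is where the eigenvalue $\lambda$ enters, and Danilov--Khovanski\u{\i} is then applied to those covers. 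Also, the references you invoke (\cite{SFM}, \cite{GLM}) concern motivic \emph{nearby} fibers at infinity rather than the local Milnor fiber at $0$; the correct primary reference for the local formula used here is Denef--Loeser together with~\cite{MT} itself. With these adjustments your sketch is a faithful summary of the cited proof.
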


\subsection{IC stalks and Milnor monodromies}\label{icandmilnor}
As we mentioned in the introduction, there exists a relationship between IC stalks and Milnor monodromies.
Let $f\in\C[x_1,\dots,x_n]$ be a polynomial of $n$ variables with coefficients in $\C$ such that $f(0)=0$.
We assume that $0$ is an isolated singular point of $V=f^{-1}(0)\subset\C^n$.
We denote by $N_{0}$ the dimension of the invariant subspace of $\Phi_{0,n-1}$ in $H^{n-1}(F_{0};\Q)$.

\begin{prop}[see e.g. {\cite[Lemma~4.3]{NT}}]\label{icstalkjigen}\renewcommand{\arraystretch}{1.5}
If $n\geq 3$, then for any $k\in\Z$, we have
\begin{align*}
\dim H^{k}(\ICVTS)=
\left\{
\begin{array}{ll}
1&\text{ if\ \ } k=0,\\
N_{0}&\text{ if\ \ }k=n-2, \text{ and}\\
0&\text{ otherwise}.
\end{array}
\right.
\end{align*}
If $n=2$, then for any $k\in\Z$, we have
\begin{align*}
\dim H^{k}(\ICVTS)=
\left\{
\begin{array}{ll}
N_{0}+1&\text{ if\ \ }k=0, \textit{ and}\\
0&\text{ otherwise}.
\end{array}
\right.
\end{align*}
\end{prop}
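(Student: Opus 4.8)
The plan is to identify $\ICVTS$ with a truncation of the cohomology of a small punctured neighbourhood of $0$ in $V$, i.e.\ with $\RG(L;\Q)$ for the link $L$ of $0$ in $V$, and then to compute $H^{*}(L;\Q)$ via the Milnor fibration. Since $\dim_{\C}V=n-1$ and $0$ is the only singular point of $V$, Corollary~\ref{korituic} (applied with $\dim V=n-1$) gives $\ICVT\simeq\tau^{\leq n-2}\DER{j}_{*}\Q_{V_{\REG}}$, where $j\colon V_{\REG}=V\setminus\{0\}\hookrightarrow V$ is the inclusion. Taking stalks at $0$ and using the local conic structure of the complex analytic germ $(V,0)$ (a small punctured neighbourhood of $0$ in $V$ deformation retracts onto $L$), one has $(\DER{j}_{*}\Q_{V_{\REG}})_{0}\simeq\RG(L;\Q)$, hence
\[
H^{k}(\ICVTS)\simeq H^{k}(L;\Q)\ \text{for}\ k\leq n-2,\qquad H^{k}(\ICVTS)=0\ \text{for}\ k\geq n-1 .
\]
So the statement reduces to showing that, for $n\geq 3$, $H^{0}(L;\Q)\cong\Q$, $H^{k}(L;\Q)=0$ for $1\leq k\leq n-3$ and $\dim H^{n-2}(L;\Q)=N_{0}$; and that, for $n=2$, $\dim H^{0}(L;\Q)=N_{0}+1$.

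For the cohomology of $L$ I would argue as follows. By Theorem~\ref{milnorfiber}, $S_{\epsilon}\setminus L$ fibres over $S^{1}$ with fibre the Milnor fibre $F_{0}$, and since $0$ is an isolated singularity, $F_{0}$ is homotopy equivalent to a bouquet of $\mu$ spheres $S^{n-1}$, so $H^{j}(F_{0};\Q)=0$ for $j\notin\{0,n-1\}$. The Wang exact sequence of this fibration over $S^{1}$,
\[
\cdots\to H^{j}(S_{\epsilon}\setminus L;\Q)\to H^{j}(F_{0};\Q)\xrightarrow{\ h^{*}-\mathrm{id}\ }H^{j}(F_{0};\Q)\to H^{j+1}(S_{\epsilon}\setminus L;\Q)\to\cdots
\]
($h$ the geometric Milnor monodromy, so that $h^{*}-\mathrm{id}$ on $H^{n-1}(F_{0};\Q)$ has kernel of dimension $N_{0}$), then computes $H^{*}(S_{\epsilon}\setminus L;\Q)$: for $n\geq 3$ one finds $H^{0}(S_{\epsilon}\setminus L;\Q)=H^{1}(S_{\epsilon}\setminus L;\Q)=\Q$, $H^{j}(S_{\epsilon}\setminus L;\Q)=0$ for $2\leq j\leq n-2$, and $H^{n-1}(S_{\epsilon}\setminus L;\Q)\cong\ker\!\big(h^{*}-\mathrm{id}\mid H^{n-1}(F_{0};\Q)\big)$, while for $n=2$ one finds $\dim H^{1}(S^{3}\setminus L;\Q)=N_{0}+1$. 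Now $L$ is a closed oriented $(2n-3)$-manifold, so Alexander duality in $S_{\epsilon}=S^{2n-1}$ together with Poincar\'e duality on $L$ gives $H^{j}(L;\Q)\cong H^{j+1}(S_{\epsilon}\setminus L;\Q)$ for $1\leq j\leq n-2$, and in degree $0$ it forces $L$ to be connected when $n\geq 3$ and to be a disjoint union of $N_{0}+1$ circles when $n=2$. Combining these yields exactly the claimed vanishing, together with $\dim H^{n-2}(L;\Q)=\dim\ker\!\big(h^{*}-\mathrm{id}\mid H^{n-1}(F_{0};\Q)\big)=N_{0}$ for $n\geq 3$ and $\dim H^{0}(L;\Q)=N_{0}+1$ for $n=2$.

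All the ingredients here are classical, so the main point is not a single hard step but the bookkeeping: keeping track of which degrees survive in the Wang and in the Alexander--Poincar\'e duality sequences and, above all, carrying the correct shift. The truncation is $\tau^{\leq n-2}$ rather than $\tau^{\leq n-1}$ precisely because $V=f^{-1}(0)$ has complex dimension $n-1$; this is what makes $H^{n-2}$ --- and not $H^{n-1}$ --- the non-trivial cohomology group of $\ICVTS$, and it is why for $n=2$ everything collapses into degree $0$. Alternatively, one may simply quote \cite[Lemma~4.3]{NT}.
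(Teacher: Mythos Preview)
The paper does not prove this proposition: it simply records the statement and refers the reader to \cite[Lemma~4.3]{NT}. Your argument is a correct self-contained proof of exactly this fact. The identification $\ICVT\simeq\tau^{\leq n-2}\DER j_{*}\Q_{V_{\REG}}$ and the passage to $\RG(L;\Q)$ via the conic structure are both standard and correctly stated, and your computation of $H^{*}(L;\Q)$ via the Wang sequence for the Milnor fibration on $S_{\epsilon}\setminus L$ together with Alexander duality in $S^{2n-1}$ and Poincar\'e duality on $L$ is the classical route; the key isomorphism $H^{j}(L;\Q)\cong H^{j+1}(S_{\epsilon}\setminus L;\Q)$ indeed follows from the chain
\[
H^{j}(L;\Q)\;\overset{\text{PD on }L}{\cong}\;H_{2n-3-j}(L;\Q)\;\overset{\text{Alexander}}{\cong}\;\widetilde{H}^{\,j+1}(S_{\epsilon}\setminus L;\Q),
\]
valid in the range you use. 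The $n=2$ case is handled correctly as well. Since the paper offers no competing argument, there is nothing further to compare.
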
\renewcommand{\arraystretch}{1}

As mentioned in Section~\ref{MHM}, 
$\ICVTS$ is a complex of mixed Hodge structures having mixed weights $\leq 0$ (see Proposition~\ref{weight}).
Then for any $r\in\Z$, we have
\begin{align*}
\GR^{W}_{r}H^{0}(\ICVTS)&=0\quad\text{if\ \ }r>0,\text{ and}\\
\GR^{W}_{r}H^{n-2}(\ICVTS)&=0\quad\text{if\ \ }r>n-2.
\end{align*}

\begin{defi}\label{quasi}
Let $f=\sum_{\alpha\in\Z^n_{\geq 0}}x^{\alpha} \in\C[x_{1},\dots,x_{n}]$ be a polynomial with coefficients in $\C$ such that $f(0)=0$.
We say that $f$ is \textit{quasi-homogeneous}
if there exists a natural number $C\in\Z_{>0}$ and $v=(v_{1},\dots,v_{n})\in(\Z_{>0})^n$ such that for any $\alpha$ with $a_{\alpha}\neq 0$, $v\cdot \alpha=C$.
\end{defi}

The following theorem was proven by Denef-Loeser~\cite{DL}
to compute the dimensions of the intersection cohomology groups of complete toric varieties.
\begin{theo}[Denef-Loeser~\cite{DL}]\label{dlpure}
Assume that the polynomial $f$ is quasi-homogeneous.
Then, the IC stalk $\ICVTS$ has a pure weight $0$. 
\end{theo}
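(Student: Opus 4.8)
The plan is to use the contracting $\TR$-action supplied by quasi-homogeneity to turn the purity of $\ICVTS$ into a statement of classical Hodge theory on a projective quotient variety. Write $f$ as quasi-homogeneous of degree $C$ for weights $v=(v_{1},\dots,v_{n})\in(\Z_{>0})^{n}$, and let $\TR$ act on $\C^{n}$ by $t\cdot(x_{1},\dots,x_{n})=(t^{v_{1}}x_{1},\dots,t^{v_{n}}x_{n})$. Since $f(t\cdot x)=t^{C}f(x)$, this action preserves $V$, its unique fixed point is $0$, it contracts $V$ to $0$ as $t\to 0$, and it is locally free on $V_{\REG}=V\setminus\{0\}$ (every nonzero point has finite stabiliser). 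In particular $\ICVH$ is monodromic along the $\TR$-orbits.

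The first step is to identify $H^{k}(\ICVTS)$ with the cohomology of $V\setminus\{0\}$ in the relevant range of degrees. Writing $j\colon V\setminus\{0\}\hookrightarrow V$ and $i_{0}\colon\{0\}\hookrightarrow V$, Corollary~\ref{korituic} gives $\ICVT\simeq\tau^{\leq n-2}\DER j_{*}\Q_{V\setminus\{0\}}$, hence $\ICVTS\simeq\tau^{\leq n-2}i_{0}^{*}\DER j_{*}\Q_{V\setminus\{0\}}$; thus $H^{k}(\ICVTS)=0$ for $k\geq n-1$, while for $k\leq n-2$ we have $H^{k}(\ICVTS)\cong H^{k}(i_{0}^{*}\DER j_{*}\Q_{V\setminus\{0\}})$. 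Because the action contracts $V$ to $0$, the contraction lemma for an attracting $\TR$-action — which holds in $\DCB\MHM(\point)$ for monodromic objects — identifies $i_{0}^{*}\DER j_{*}$ applied to the Hodge module $\Q^{H}_{V\setminus\{0\}}$ with $\RG(V\setminus\{0\};\Q^{H})$. Consequently $H^{k}(\ICVTS)\cong H^{k}(V\setminus\{0\};\Q)$ as mixed Hodge structures for $k\leq n-2$. Since a complex of mixed Hodge structures has pure weight $0$ exactly when each $H^{k}$ is pure of weight $k$, and since $H^{k}(\ICVTS)=0$ for $k\geq n-1$, the theorem is reduced to showing that $H^{k}(V\setminus\{0\};\Q)$ is pure of weight $k$ for every $k\leq n-2$.

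The second step is the Gysin sequence of the quotient. As the $\TR$-action on $V\setminus\{0\}$ is locally free with one-dimensional orbits, the quotient $\overline{V}:=(V\setminus\{0\})/\TR$ — naturally a closed subvariety of the weighted projective space $\PR(v_{1},\dots,v_{n})$, of complex dimension $n-2$ — is a projective $\Q$-homology manifold; hence $H^{*}(\overline{V};\Q)$ is pure with $H^{k}$ of weight $k$, and the hard Lefschetz theorem holds for it with respect to the ample class $e\in H^{2}(\overline{V};\Q)$, which (up to sign) is the Euler class of the Seifert $\TR$-bundle $\pi\colon V\setminus\{0\}\to\overline{V}$. The Gysin--Wang long exact sequence of this bundle,
\[\cdots\longrightarrow H^{k-2}(\overline{V})(-1)\xrightarrow{\cup e}H^{k}(\overline{V})\xrightarrow{\pi^{*}}H^{k}(V\setminus\{0\})\longrightarrow H^{k-1}(\overline{V})(-1)\xrightarrow{\cup e}H^{k+1}(\overline{V})\longrightarrow\cdots,\]
is an exact sequence of mixed Hodge structures in which every $\cup e$ is a morphism between pure Hodge structures of one and the same weight. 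Therefore $H^{k}(V\setminus\{0\})$ is an extension of $\ker\!\big(\cup e\colon H^{k-1}(\overline{V})(-1)\to H^{k+1}(\overline{V})\big)$ by $\operatorname{coker}\!\big(\cup e\colon H^{k-2}(\overline{V})(-1)\to H^{k}(\overline{V})\big)$. The cokernel term is pure of weight $k$; and for $k\leq n-2$, i.e. $k-1\leq(n-2)-1$, hard Lefschetz forces $\cup e\colon H^{k-1}(\overline{V})\to H^{k+1}(\overline{V})$ to be injective, so the kernel term vanishes. Hence $H^{k}(V\setminus\{0\};\Q)$ is pure of weight $k$ for all $k\leq n-2$, which is exactly what was needed; in particular this recovers and refines the general bound that $\ICVTS$ has mixed weights $\leq 0$.

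The point I expect to require the most care is the Hodge-theoretic input on $\overline{V}$: one must justify that $\overline{V}$ is a projective $\Q$-homology manifold and that purity and hard Lefschetz hold for it — via $\IC_{\overline{V}}(\Q)\simeq\Q_{\overline{V}}[n-2]$ together with M.\ Saito's theory, or via orbifold Hodge theory — and that the Gysin sequence of the Seifert $\TR$-bundle genuinely lives in the category of mixed Hodge structures with the connecting map given by the Euler class; the latter can be arranged by realising $V\setminus\{0\}$ as the complement of the zero section in the orbifold line bundle $\mathcal{O}_{\overline{V}}(1)$ and combining the localisation triangle with the Thom isomorphism, which is available with $\Q$-coefficients over a $\Q$-homology manifold. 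When $f$ is homogeneous the $\TR$-action on $V\setminus\{0\}$ is free, $\overline{V}$ is a smooth projective hypersurface in $\PR^{n-1}$, and all of these inputs reduce to the classical Lefschetz theorems.
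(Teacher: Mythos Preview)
Your argument is correct. The contraction lemma step is sound: the adjunction morphism $a_{V*}\DER j_{*}\Q^{H}_{V\setminus\{0\}}\to i_{0}^{*}\DER j_{*}\Q^{H}_{V\setminus\{0\}}$ exists in $\DCB\MHM(\point)$ by functoriality, and is an isomorphism on underlying complexes by the $\TR$-contraction, hence is an isomorphism of mixed Hodge structures; no further ``monodromic'' technology is really needed here. The Gysin/hard Lefschetz step is likewise fine once one knows $\overline{V}$ is a projective $\Q$-homology manifold, which you flag correctly.

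It is worth noting, however, that the paper does not supply its own proof of this theorem: it is quoted from Denef--Loeser and stated without argument. The paper \emph{does} recover it indirectly, as the implication $(\mathrm{iii})\Rightarrow(\mathrm{i})$ of Corollary~\ref{mainco} together with the observation that a quasi-homogeneous $f$ has flat $\Gamma_{+}(f)$; but that route passes through the full strength of Theorem~\ref{mainth} (deformation via Proposition~\ref{subetedoukei}, compactification in $\Pn$, Stapledon's and Matsui--Takeuchi's results on motivic nearby/Milnor fibers, and a virtual Poincar\'e polynomial computation), and then uses semisimplicity of the Milnor monodromy of a quasi-homogeneous polynomial to conclude $J^{1}_{s}=0$ for $s>1$.

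So your approach is genuinely different from anything in the paper: it is a direct, self-contained geometric proof exploiting the $\TR$-action, and is much closer in spirit to Denef--Loeser's original argument for toric varieties. The paper's route is far heavier but yields more---namely the exact formula $\dim\GR^{W}_{r}H^{n-2}(\ICVTS)=J^{1}_{n-r-1}$ in the non-pure case---whereas your argument gives purity cleanly but does not obviously generalise beyond the quasi-homogeneous (or flat-polyhedron) situation.
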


We introduce the new notion which is a generalization of quasi-homogeneousness.

\begin{defi}\label{flatness}
Let $f$ be a polynomial with coefficients in $\C$ such that $f(0)=0$.
We say that \textit{$\Gamma_{+}(f)$ is flat}
if the affine space spanned by $\Gamma_{f}$ is a hyperplane in $\R^n$ (see Figures~\ref{fig1} and \ref{fig2}).
\end{defi}

For example, the Newton polyhedron $\Gamma_{+}(f)$ of a quasi-homogeneous polynomial $f$ is flat.

\begin{figure}[htbp]
\begin{tabular}{cc}
\begin{minipage}{0.45\hsize}
\begin{center}
\begin{tikzpicture}[scale = 0.5]
\draw[help lines] (-1,-1) grid (6,6);
\draw[fill,black, opacity=.5] (0,4) --(3,0)--(6,0)--(6,6)--(0,6)--cycle;
\draw[->,thick] (0,-1)--(0,6);
\draw[->,thick] (-1,0)--(6,0);
\draw (0,0) node[below left]{(0,0)};
\draw (0,4) node[left]{(0,4)};
\draw (3,0) node[below]{(3,0)};
\draw (4,4) node{$\Gamma_{+}(f)$};
\end{tikzpicture}
\caption{$\Gamma_{+}(f)\mbox{\ is flat}$}\label{fig1}
\end{center}
\end{minipage}&
\begin{minipage}{0.45\hsize}
\begin{center}
\begin{tikzpicture}[scale = 0.5] 
\draw[help lines] (-1,-1) grid (6,6);
\draw[fill,black, opacity=.5] (0,4) --(1,1)--(3,0)--(6,0)--(6,6)--(0,6)--cycle;
\draw[->,thick] (0,-1)--(0,6);
\draw[->,thick] (-1,0)--(6,0);
\draw (0,0) node[below left]{(0,0)};
\draw (0,4) node[left]{(0,4)};
\draw (3,0) node[below]{(3,0)};
\draw (1,1) node[above right]{(1,1)};
\draw (4,4) node{$\Gamma_{+}(f)$};
\end{tikzpicture}
\caption{$\Gamma_{+}(f)\mbox{\ is not flat}$}\label{fig2}
\end{center}
\end{minipage}
\end{tabular}
\end{figure}
Under the assumption that $f$ is convenient and non-degenerate at $0$,
we will show that
the Newton polyhedron $\Gamma_{+}(f)$ completely determines
the mixed Hodge numbers of the mixed Hodge structures of $H^{k}(\ICVTS)$,
in Proposition~\ref{subetedoukei}.

\section{Main theorem}\label{secmainth}
Let $f=\sum_{\alpha\in\Z^n_{\geq 0}}a_{\alpha}x^{\alpha}\in\C[x_1,...,x_n]$ be a non-constant polynomial of $n$ variables with coefficients in $\C$ such that
$f(0)=0$.
Assume that $f$ is convenient (see Definition~\ref{convenient2}) and non-degenerate at $0$ (see Definition~\ref{nondegatzero}).
Set $V:=\{x\in\C^n\ |\ f(x)=0\}\subset \C^n$.
Then it is well-known that $0\in V$ is a smooth or isolated singular point of $V$.
In what follows, we assume that $n\geq 3$.
For the case where $n=2$, see Theorem~\ref{n2case}.

\begin{theo}\label{mainth}
Under the above conditions, for any $r\in\Z$, we have
\begin{align*}
\dim \GR^{W}_{r}H^{0}((\ICVT)_0) =
\left\{\renewcommand{\arraystretch}{1.5}
\begin{array}{ll}
1&\text{ if\ \ }r=0,\\
0&\text{ if\ \ }r\neq 0,
\end{array}
\right.
\end{align*}
and
\[
\dim \GR^{W}_{r}H^{n-2}((\ICVT)_0) = J^{1}_{n-r-1},
\]
where $J^{1}_{s}$ is the number of the Jordan blocks in $\Phi_{0,n-1}$ with size $s$ for the eigenvalue $1$ (see Section~\ref{milmono}).
\end{theo}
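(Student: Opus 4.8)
The strategy is to deform $f$ to a polynomial that can be compactified nicely in $\mathbb{P}^n$, compute the virtual Poincaré polynomial of the IC stalk of the compactified hypersurface by comparing its global intersection cohomology (which is pure by Proposition~\ref{weight}, since the compactification is complete) with the contributions of the smooth locus and of the point at the origin, and finally read off the weights of $(\widetilde{\mathrm{IC}_V})_0$ from Stapledon's purity result (Proposition~\ref{syuucyuu}) and the dictionary between the limit mixed Hodge structure and the Jordan form of the Milnor monodromy recorded in Proposition~\ref{milnohyou} (Table~\ref{tab2}).

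\textbf{Step 1: Reduction to a compactifiable model.} First I would invoke the deformation construction of Section~\ref{sechenkei} to replace $f$ by a convenient, non-degenerate polynomial $\wt{f}$ whose Milnor fiber at $0$, Milnor monodromy, and IC stalk at $0$ all coincide (as mixed Hodge structures) with those of $f$, but for which the projective closure $\overline{V}\subset\mathbb{P}^n$ of $V=\{\wt f=0\}$ has only the single singular point $0$ and is otherwise as tame as possible --- in particular so that $\overline V$ has at worst the isolated singularity at $0$ and its intersection cohomology is accessible. The key point is that weights of a stalk are local, so the deformation does not change the left-hand side of the theorem.

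\textbf{Step 2: The weight bookkeeping.} For the complete variety $\overline V$, the complex $\DER\Gamma(\overline V;\ICX[-n])$ has pure weight $0$ by Proposition~\ref{weight}. I would compare this with the constant-sheaf cohomology using the triangle relating $\IC_{\overline V}$, the pushforward from the regular locus, and the skyscraper at $0$ (Corollary~\ref{korituic}): concretely $(\widetilde{\IC_V})_0 = \tau^{\leq n-1}\DER j_*\Q_{V_{\REG}}$, whose stalk cohomology in degrees $0<k\leq n-2$ is computed by the link $L$ and is identified with $H^k(F_0)^{\Phi_{0,n-1}}$-type data via the classical Milnor-fibration long exact sequence; by Proposition~\ref{icstalkjigen} only $k=0$ and $k=n-2$ survive, with total dimension $1$ and $N_0$. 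Writing out the virtual Poincaré polynomial identity $\VP(\DER\Gamma(\overline V))=\VP(\text{global IC})+(\text{correction at }0)$ and matching it against Stapledon's statement that the monodromy on $\GR^W_r H^{n-1}_c(f^{-1}(\epsilon))$ is trivial for $r\neq n-1$, I would pin down $\dim\GR^W_r H^{n-2}((\widetilde{\IC_V})_0)$ as a single graded piece of the weight filtration on the invariants, namely $\GR^W_{r+?}H^{n-1}(F_0)_1$ up to an index shift --- and Table~\ref{tab2} together with the Jordan-block count in Proposition~\ref{lmhtojordan} turns that into $J^1_{n-r-1}$. The $H^0$ statement is the well-known local connectedness/purity fact: $H^0((\widetilde{\IC_V})_0)=\Q$ with weight $0$, which also follows from the mixed-weights-$\leq 0$ bound plus the surjection from $H^0$ of a nearby smooth point.

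\textbf{The main obstacle.} The delicate point is Step 1 together with the precise index matching in Step 2: one must ensure that the deformation genuinely preserves the mixed Hodge structure of the IC stalk (not merely its dimension), and then carefully track the weight shifts when passing from the limit MHS on $H^{n-1}(F_0)_1$ (which is ``centered at degree $n$'' per Proposition~\ref{milnohyou}(ii)) to the weight filtration on $H^{n-2}((\widetilde{\IC_V})_0)$ (which lives in weights $\leq n-2$). Getting the shift right is what produces the exponent $n-r-1$ rather than some neighbouring value, and verifying it requires combining the symmetry $h^{p,q,r}_\lambda = h^{r-q,r-p,r}_\lambda$ with the mixed-weights-$\leq 0$ constraint to rule out the reflected contributions.
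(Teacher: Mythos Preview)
Your plan is essentially the paper's own strategy: reduce by the deformation of Section~\ref{sechenkei} to a polynomial whose projective closure $X\subset\mathbb{P}^n$ has $0$ as its unique singular point, then compare virtual Poincar\'e polynomials using purity of $\RG(X;\ICXT)$, Stapledon's Proposition~\ref{syuucyuu}, and Table~\ref{tab2}. You have correctly located the two delicate points.

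One sharpening for your Step~2: the paper does not identify $\GR^W_r H^{n-2}((\ICVT)_0)$ directly with a graded piece of $H^{n-1}(F_0)_1$. Instead it writes
\[
\VP(\RG(X;\ICXT))=Q_1+Q_2-Q_3-Q_4-1+\VP((\ICVT)_0),
\]
where $Q_1,Q_2,Q_3$ are each symmetric about degree $n-1$ (coming respectively from the monodromy filtration on $\GR^W_{n-1}$ of the global nearby fiber, the smooth hypersurface $X_\epsilon$, and the $\lambda\neq 1$ part of $H^{n-1}(F_0)$), while $Q_4$, the $\lambda=1$ part of $\VP(H^{n-1}(F_0)[-(n-1)])$, is symmetric about degree $n$ by Proposition~\ref{milnohyou}(ii). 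Since the left side is symmetric about $n-1$ and $\VP((\ICVT)_0)$ has degree $\leq n-2$, applying $\trun_{\leq n-1}$ and the reflection $T\mapsto T^{-1}$ kills $Q_1,Q_2,Q_3$ and leaves exactly the asymmetry of $Q_4$; this mismatch of centers ($n$ versus $n-1$) is precisely what produces $J^1_{n-r-1}$. That is the mechanism behind the index shift you flagged as the main obstacle.
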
\renewcommand{\arraystretch}{1}

\begin{cor}\label{mainco}
In the situation of Theorem~\ref{mainth}, the following three conditions are equivalent.
\begin{enumerate}
\item The IC stalk $(\ICVT)_{0}$ has a pure weight $0$.
\item There is no Jordan block of $\Phi_{0,n-1}$ with size $>1$ for the eigenvalue $1$.
\item The Newton polyhedron $\Gamma_{+}(f)$ of $f$ is flat (see Definition~\ref{flatness}).
\end{enumerate}
\end{cor}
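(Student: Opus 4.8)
The plan is to read off (1)$\Leftrightarrow$(2) from Theorem~\ref{mainth} and Proposition~\ref{icstalkjigen}, and then to handle the equivalence with (3) by combinatorics. For the first part: since $n\geq 3$ we have $n-2\geq 1$, so Proposition~\ref{icstalkjigen} says that $H^{k}(\ICVTS)$ vanishes outside the two distinct degrees $k=0$ and $k=n-2$. By Theorem~\ref{mainth} the degree-$0$ cohomology is always pure of weight $0$, while $\dim\GR^{W}_{r}H^{n-2}(\ICVTS)=J^{1}_{n-r-1}$; as a function of $r$ this is supported in the single value $r=n-2$ (where it equals $J^{1}_{1}$) if and only if $J^{1}_{s}=0$ for all $s\geq 2$, i.e.\ if and only if $\Phi_{0,n-1}$ has no Jordan block of size $>1$ for the eigenvalue $1$. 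So $\ICVTS$ has pure weight $0$ exactly when (2) holds.

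For (3)$\Rightarrow$(2) I would reduce to the quasi-homogeneous case. If $\Gamma_{+}(f)$ is flat then $\Gamma_{f}$ lies in an affine hyperplane, and convenience of $f$ forces this hyperplane to meet every coordinate axis in a positive point, so after scaling a normal vector it is $\{v\cdot x=C\}$ with $v\in\Z^{n}_{>0}$. The principal part $g:=\sum_{\alpha\in\Gamma_{f}}a_{\alpha}x^{\alpha}$ is then quasi-homogeneous for the weights $v$, is again convenient, is non-degenerate at $0$ (its face polynomials for faces contained in $\Gamma_{f}$ coincide with those of $f$, so $0$ is again an isolated singular point of $g^{-1}(0)$), and satisfies $\Gamma_{g}=\Gamma_{f}$. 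By Theorem~\ref{dlpure} the IC stalk of $g^{-1}(0)$ at $0$ is pure of weight $0$, so the equivalence (1)$\Leftrightarrow$(2) --- applied this time to $g$ --- gives $J^{1}_{s}=0$ for every $s\geq 2$ in the Milnor monodromy of $g$; by Theorem~\ref{jordangammadekimaru} the numbers $J^{1}_{s}$ are invariants of the Newton boundary alone, hence $J^{1}_{s}(f)=J^{1}_{s}(g)=0$ for all $s\geq 2$. This is (2), and in passing it exhibits Denef-Loeser's purity theorem as the quasi-homogeneous case.

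For (2)$\Rightarrow$(3) I would argue by contraposition. Suppose $\Gamma_{+}(f)$ is not flat; then $\Gamma_{f}$ is $(n-1)$-dimensional but not contained in any hyperplane. The plan is to show, by a combinatorial analysis of the Newton boundary of a convenient polynomial, that this forces a lattice point in the $1$-skeleton of $\Gamma_{f}$ meeting $\INT(\R^{n}_{\geq 0})$, hence $\Pi_{f}>0$; then Theorem~\ref{mtjordan}(2) gives $J^{1}_{n-1}=\Pi_{f}>0$, and since $n-1\geq 2$ condition (2) fails. Equivalently, since by Theorem~\ref{jordangammadekimaru} the whole family $(J^{1}_{s})_{s}$ is a function of $\Gamma_{f}$, it is enough to verify the purely combinatorial claim that this family satisfies $J^{1}_{s}=0$ for all $s\geq 2$ precisely when $\Gamma_{f}$ is flat.

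I expect (2)$\Rightarrow$(3) to be the main obstacle. The implication (3)$\Rightarrow$(2) is soft --- it merely transports the classical quasi-homogeneous result along the invariance of the $J^{1}_{s}$ in $\Gamma_{f}$ --- whereas the converse requires genuinely producing a size-$\geq 2$ eigenvalue-$1$ Jordan block from the failure of flatness. This pushes one into the fine lattice combinatorics underlying the Matsui-Takeuchi (and Stapledon) descriptions of the Milnor monodromy Jordan structure: one has to pin down exactly which faces of $\Gamma_{f}$ and which lattice points contribute to $\Pi_{f}$ and to the higher $J^{1}_{s}$, and then show that non-flatness always creates such a contribution.
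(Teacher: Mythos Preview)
Your plan is essentially the paper's proof. For (i)$\Leftrightarrow$(ii) you argue exactly as the paper does via Theorem~\ref{mainth}. For (iii)$\Rightarrow$(ii) there is only a minor variation: the paper passes to the quasi-homogeneous principal part $f'=\sum_{\alpha\in\Gamma_{f}}a_{\alpha}x^{\alpha}$, invokes Proposition~\ref{subetedoukei} (proved later in the paper) to match the mixed Hodge numbers of the IC stalks of $f$ and $f'$, and then uses the classical fact that the Milnor monodromy of a quasi-homogeneous polynomial is semisimple. Your route---Theorem~\ref{dlpure} for $g$, then the already-established (i)$\Leftrightarrow$(ii) applied to $g$, then Theorem~\ref{jordangammadekimaru} to transport the numbers $J^{1}_{s}$ back to $f$---is equally valid and has the small advantage of avoiding the forward reference to Proposition~\ref{subetedoukei}. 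You could shorten it further by using directly that quasi-homogeneity forces semisimple monodromy (hence $J^{1}_{s}(g)=0$ for all $s\geq 2$) rather than detouring through Denef--Loeser purity.

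For (ii)$\Rightarrow$(iii) you overestimate the difficulty relative to the paper. The paper's entire argument is the single sentence ``Since $n\geq 3$, (ii)$\Rightarrow$(iii) follows from Theorem~\ref{mtjordan}.'' No lattice combinatorics beyond the formula $J^{1}_{n-1}=\Pi_{f}$ is spelled out; the implication ``$\Gamma_{+}(f)$ not flat $\Rightarrow \Pi_{f}>0$'' is taken as immediate from, or part of, the cited references. So your contraposition via Theorem~\ref{mtjordan}(ii) is precisely what the paper does, and the elaborate combinatorial analysis you anticipate as the ``main obstacle'' is not carried out in the paper at all.
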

\begin{proof}
The equivalence $({\rm i})\iff ({\rm ii})$ follows from Theorem~\ref{mainth}.
Since $n\geq 3$, $({\rm ii})\Rightarrow ({\rm iii})$ follows from Theorem~\ref{mtjordan}.
Suppose that $\Gamma_{+}(f)$ is flat.
We define a quasi-homogeneous polynomial $f'\in\C[x_{1},\dots,x_{n}]$ as
$f':=\sum_{\alpha\in\Gamma_{f}}a_{\alpha}x^{\alpha}$,
and a hypersurface $V'\subset \C^n$ as the zero set of $f'$ in $\C^n$.
Note that $\Gamma_{+}(f')=\Gamma_{+}(f)$ and $f'$ is non-degenerate at $0$.
By Proposition~\ref{subetedoukei} below, we will show that the mixed Hodge numbers of $H^{k}(\ICVTS)$ are the same of
the mixed Hodge numbers of $H^{k}((\wt{\mathrm{IC}_{V'}})_{0})$.
The Milnor monodromy of a quasi-homogeneous polynomial is semisimple.
Therefore we have $({\rm iii})\Rightarrow ({\rm ii})$.
\end{proof}

\begin{rem}
We can rewrite Theorem~\ref{mainth} in terms of the virtual Poincar\'{e} polynomial as follows:
\begin{align*}
\VP(\ICVTS)(T)&=1+(-1)^{n-2}\sum_{i=0}^{n-2}J^{1}_{n-i-1}T^i
\\&=1+(-1)^{n-2}(J^{1}_{n-1}+J^{1}_{n-2}T+J^{1}_{n-3}T^2+\dots+J^{1}_{1}T^{n-2}).
\end{align*}

\end{rem}

\begin{rem}
By Theorem~\ref{month},
there is no Jordan block of $\Phi_{0,n-1}$ with size $\geq n$ for the eigenvalue $1$.
Hence the dimension $N_{0}$ of the invariant subspace of $\Phi_{0,n-1}$ in $H^{n-1}(F_0;\Q)$
is equal to $J^{1}_1+\cdots+J^{1}_{n-1}$. 
On the other hand, by Proposition~\ref{icstalkjigen} and Theorem~\ref{mainth} we have $N_{0}=\dim H^{n-2}(\ICVTS)=\sum_{r\in\Z}\dim \GR^{W}_{r}H^{n-2}(\ICVTS)$.
Namely Theorem~\ref{mainth} means that these two decompositions of $N_{0}$ are the same. 
\end{rem}

As a corollary of Theorem~\ref{mainth}, we get the following result of the mixed Hodge structures of the cohomology groups of the link of the isolated singular point $0$ in $V$.
The intersection of $V$ and a small sphere centered at $0$ is called the \textit{link of $0$ in $V$} and
we denote it by $L$.
It is known that $L$ is a $(2n-3)$-dimensional orientable compact real manifold.
We denote by $j$ the inclusion map $V\setminus\{0\}\hookrightarrow V$ and
by $i_{0}$ the inclusion map $\{0\}\hookrightarrow V$.
Then for any $k\in\Z$, the cohomology group $H^{k}(L;\Q)$ of the link can be expressed as $H^{k}(i_{0}^{-1}\DER{j}_{*}\Q_{V\setminus\{0\}})$.
Note that $H^{k}(i_{0}^{-1}\DER{j}_{*}\Q_{V\setminus\{0\}})$ has a canonical mixed Hodge structure by using the same argument for $H^{k}(\ICVTS)$,
and hence we can endowed $H^{k}(L;\Q)$ with a canonical mixed Hodge structure.
It is clear that for $k\leq n-2$, $H^{k}(\ICVTS)$ is isomorphic to $H^{k}(L;\Q)$ as mixed Hodge structures.
Moreover by \cite[Proposition~3.3]{DurfeeSaito}, for any $k\in\Z$, we have the Poincar\'{e} duality isomorphism as mixed Hodge structures:
\[H^{k}(L;\Q)\simeq (H^{2n-3-k}(L;\Q)(n-1))^{*},\]
where $H^{2n-3-k}(L;\Q)(n-1)$ stands for the $(n-1)$-Tate twist of $H^{2n-3-k}(L;\Q)$.
Therefore, $H^{0}(L;\Q)$, $H^{n-2}(L;\Q)$, $H^{n-1}(L;\Q)$ and $H^{2n-3}(L;\Q)$ are the only non-trivial cohomology groups
and 
for any $r,k\in\Z$, we have
\[\dim{\GR^{W}_{r}H^{k}(L;\Q)}=\dim{\GR^{W}_{2(n-1)-r}H^{2n-3-k}(L;\Q)}.\]
We thus obtain the following.

\begin{cor}\label{corlink}\renewcommand{\arraystretch}{1.5}
In the situation of Theorem~\ref{mainth}, for any $r\in\Z$, we have
\begin{align*}
\dim{\GR^{W}_{r}H^{0}(L;\Q)}=\dim{\GR^{W}_{2(n-1)-r}H^{2n-3}(L;\Q)}=\left\{
\begin{array}{ll}
1&\text{ if\ \ }r=0,\\
0&\text{ if\ \ }r\neq 0,
\end{array}
\right.
\end{align*}
and
\[\dim{\GR^{W}_{r}H^{n-2}(L;\Q)}=\dim{\GR^{W}_{2(n-1)-r}H^{n-1}(L;\Q)}=J^{1}_{n-r-1}.\]
\end{cor}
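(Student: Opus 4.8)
The plan is to derive the corollary formally from Theorem~\ref{mainth} together with the two facts about the link $L$ recalled just above its statement: the degreewise isomorphism of mixed Hodge structures $H^{k}(\ICVTS)\simeq H^{k}(L;\Q)$ valid for $k\leq n-2$, and the Poincar\'e duality identity $\dim\GR^{W}_{r}H^{k}(L;\Q)=\dim\GR^{W}_{2(n-1)-r}H^{2n-3-k}(L;\Q)$ coming from \cite[Proposition~3.3]{DurfeeSaito}. Since the same source already tells us that $H^{0}$, $H^{n-2}$, $H^{n-1}$ and $H^{2n-3}$ are the only nonvanishing cohomology groups of $L$, it suffices to compute the weight-graded dimensions in these four degrees.

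First I would treat the degrees $0$ and $n-2$. Because $n\geq 3$, both of these indices lie in the range $k\leq n-2$ (and in fact the four indices $0$, $n-2$, $n-1$, $2n-3$ are then pairwise distinct), so $H^{0}(L;\Q)\simeq H^{0}(\ICVTS)$ and $H^{n-2}(L;\Q)\simeq H^{n-2}(\ICVTS)$ as mixed Hodge structures. Transporting the two formulas of Theorem~\ref{mainth} through these isomorphisms gives $\dim\GR^{W}_{r}H^{0}(L;\Q)=1$ if $r=0$ and $0$ otherwise, and $\dim\GR^{W}_{r}H^{n-2}(L;\Q)=J^{1}_{n-r-1}$ for all $r\in\Z$.

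Then I would obtain the remaining two degrees from Poincar\'e duality. Setting $k=0$ in the duality identity yields $\dim\GR^{W}_{2(n-1)-r}H^{2n-3}(L;\Q)=\dim\GR^{W}_{r}H^{0}(L;\Q)$, which by the previous step is $1$ for $r=0$ and $0$ otherwise. Setting $k=n-2$ and using $2n-3-(n-2)=n-1$ yields $\dim\GR^{W}_{2(n-1)-r}H^{n-1}(L;\Q)=\dim\GR^{W}_{r}H^{n-2}(L;\Q)=J^{1}_{n-r-1}$. As these degrees exhaust the nonvanishing cohomology of $L$, this completes the argument.

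I do not expect a genuine obstacle: the statement is a bookkeeping consequence of results already in hand. The only point needing mild care is the index tracking — verifying that $0$ and $n-2$ really fall in the range $k\leq n-2$ where IC-stalk cohomology and link cohomology agree (this uses $n\geq 3$), and that the duality partner degree $2n-3-(n-2)$ is exactly the degree $n-1$ appearing in the statement.
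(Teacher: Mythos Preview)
Your proposal is correct and follows exactly the approach of the paper: the paper does not give a separate proof of the corollary but derives it from Theorem~\ref{mainth} via the identification $H^{k}(L;\Q)\simeq H^{k}(\ICVTS)$ for $k\leq n-2$ and the Durfee--Saito Poincar\'e duality, precisely as you do.
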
\renewcommand{\arraystretch}{1}

First, we show Theorem~\ref{mainth} in the following very special case.

\begin{lem}\label{mainlem}
Assume that $f$ satisfies the conditions of Theorem~\ref{mainth}.
Moreover, suppose that $f$ has no linear term, its degree $m$ part is $\sum_{1\leq i \leq n}x^m_i$ and $f$ is non-degenerate (see Definition~\ref{nondeg}).
Then, Theorem~\ref{mainth} holds.
\end{lem}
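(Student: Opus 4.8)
The plan is to deduce the lemma from the structure of the projective compactification of $V$ together with the results of Stapledon and Matsui--Takeuchi. Since $f$ has no linear term and its degree-$m$ part is $\sum_{1\le i\le n}x_i^m$, the closure $\overline{V}\subset\Pn$ of $V$ meets the hyperplane at infinity along the smooth Fermat hypersurface $\{\sum_{i}x_i^m=0\}\subset\PR^{n-1}$ and is smooth along it; as $0$ is the only singular point of $V$, it follows that $0$ is the unique singular point of $\overline{V}$, and that $\overline{V}_{\infty}:=\overline{V}\setminus V$ is smooth projective of dimension $n-2$. Because $V$ and $\overline{V}$ coincide near $0$, we have $(\ICVT)_{0}=(\widetilde{\mathrm{IC}_{\overline{V}}})_{0}$, so it suffices to work on $\overline{V}$.

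First I would write down the triangle measuring the failure of the constant sheaf to be the intersection complex: there is a canonical morphism $\Q^{H}_{\overline{V}}\to\widetilde{\mathrm{IC}^{H}_{\overline{V}}}$ in $\DCB\MHM(\overline{V})$ whose cone is supported at $0$, and by Proposition~\ref{icstalkjigen} this cone is concentrated in degree $n-2$, where it equals $H^{n-2}((\widetilde{\mathrm{IC}_{\overline{V}}})_{0})=H^{n-2}((\ICVT)_{0})$. Applying $\RG(\overline{V};-)$ and using that $\overline{V}$ is complete, so that each $IH^{k}(\overline{V};\Q)$ is pure of weight $k$, gives an exact sequence of mixed Hodge structures
\[0\to H^{n-2}(\overline{V};\Q)\to IH^{n-2}(\overline{V};\Q)\to H^{n-2}((\ICVT)_{0})\to H^{n-1}(\overline{V};\Q)\to IH^{n-1}(\overline{V};\Q)\to 0.\]
Taking $\GR^{W}_{r}$, using purity of the $IH$-terms and that $(\ICVT)_{0}$ has mixed weights $\le 0$, one gets $\dim\GR^{W}_{r}H^{n-2}((\ICVT)_{0})=\dim\GR^{W}_{r}H^{n-1}(\overline{V};\Q)$ for $r\le n-3$ and $\GR^{W}_{r}H^{n-2}((\ICVT)_{0})=0$ for $r>n-2$. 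The single remaining value $r=n-2$ will be fixed at the end by the identity $\sum_{r}\dim\GR^{W}_{r}H^{n-2}((\ICVT)_{0})=N_{0}=\sum_{s\ge 1}J^{1}_{s}$ coming from Proposition~\ref{icstalkjigen}.

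Next I would compute $\GR^{W}_{r}H^{n-1}(\overline{V};\Q)$ for $r\le n-3$ from the affine part. Since $\overline{V}_{\infty}$ is smooth projective its cohomology is pure, and $V$ is affine of dimension $n-1$; the long exact sequence attached to $\overline{V}=V\sqcup\overline{V}_{\infty}$ then gives $\GR^{W}_{r}H^{n-1}(\overline{V};\Q)\cong\GR^{W}_{r}H^{n-1}_{c}(V;\Q)$ for $r\le n-3$. Now compare $V=f^{-1}(0)$ with a generic fiber $f^{-1}(\epsilon)$ via the triangle $\RGC(V;\Q^{H}_{V})\to\psi_{t}\DER{f}_{!}\Q^{H}_{\C^{n}}\to\phi_{t}\DER{f}_{!}\Q^{H}_{\C^{n}}\xrightarrow{+1}$: the middle term is $\RGC(f^{-1}(\epsilon);\Q)$, and since $0$ is the only critical point of $f$ in $f^{-1}(0)$ the last term is supported at $0$, equal to the reduced limit cohomology of the local Milnor fiber $F_{0}$, i.e.\ to $H^{n-1}(F_{0};\Q)$ placed in degree $n-1$. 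Hence $H^{n-1}_{c}(V;\Q)=\ker\bigl(\mathrm{can}\colon H^{n-1}_{c}(f^{-1}(\epsilon);\Q)\to H^{n-1}(F_{0};\Q)\bigr)$. Here Proposition~\ref{syuucyuu} enters decisively: $H^{k}_{c}(f^{-1}(\epsilon))=0$ for $k\ne n-1,2(n-1)$; on the eigenvalue $\lambda\ne 1$ part $H^{n-1}_{c}(f^{-1}(\epsilon))$ is pure of weight $n-1$ and $\mathrm{can}$ is an isomorphism there; and $N=\mathrm{var}\circ\mathrm{can}$ vanishes on $\GR^{W}_{r}H^{n-1}_{c}(f^{-1}(\epsilon))$ for $r\ne n-1$. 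Consequently $\ker(\mathrm{can})$ lies in the eigenvalue-$1$ part, and---using the strictness of $\mathrm{can}$ for the weight filtration and the relation $\mathrm{var}\circ\mathrm{can}=N$---for $r\ne n-1$ its $r$-th weight-graded piece matches a graded piece of $\ker(N)$ inside $H^{n-1}(F_{0})_{1}$, whose weight filtration is the monodromy filtration of $N$ centered at $n$ by Proposition~\ref{milnohyou}. A Jordan block of $\Phi_{0,n-1}$ of size $s$ for the eigenvalue $1$ contributes exactly one dimension to $\ker(N)\subset H^{n-1}(F_{0})_{1}$, in weight $n-s+1$; after the Tate twist reflecting the weight $\le 0$ normalization of $\ICVT$ this yields $\dim\GR^{W}_{r}H^{n-2}((\ICVT)_{0})=J^{1}_{n-r-1}$ for $r\le n-3$. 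The value at $r=n-2$ then equals $N_{0}-\sum_{s\ge 2}J^{1}_{s}=J^{1}_{1}=J^{1}_{n-(n-2)-1}$ by the total-dimension identity, and the $H^{0}$ statement is immediate since $H^{0}((\ICVT)_{0})$ is the degree-$0$ cohomology of the connected link, hence $\Q$ of weight $0$.

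The main obstacle is precisely the weight bookkeeping in the previous paragraph: one must verify that the Deligne weight filtration on $H^{n-1}_{c}(f^{-1}(\epsilon))$ and the morphism $\mathrm{can}$ interact so that $\GR^{W}_{r}\ker(\mathrm{can})$ has dimension exactly $J^{1}_{n-r-1}$ for $r\le n-3$. This must combine (i) Stapledon's stronger vanishing $\GR^{M}_{q}\GR^{W}_{r}H^{n-1}_{c}(f^{-1}(\epsilon))=0$ for $r\ne n-1$ and $q\ne r$, which forces the Deligne and relative monodromy filtrations to agree off the middle weight; (ii) the identity $\mathrm{var}\circ\mathrm{can}=N$ and strictness, to pass from $\ker(\mathrm{can})$ to $\ker(N)$ weight piece by weight piece; and (iii) the monodromy-weight description of $H^{n-1}(F_{0})_{1}$ from Proposition~\ref{milnohyou}, together with Matsui--Takeuchi's Theorems~\ref{mtjordan} and \ref{jordangammadekimaru} to handle the remaining combinatorial input and, at $r=n-2$, the dimensions of $H^{n-2}(\overline{V})$, $IH^{n-2}(\overline{V})$ and $IH^{n-1}(\overline{V})$. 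A secondary but unavoidable technical point is keeping track of all shifts and Tate twists in the mixed Hodge module functors $\psi_{t}$, $\phi_{t}$, $\mathrm{can}$, $\mathrm{var}$ and in the self-duality of the intersection complex, so that the final weight index comes out as $n-r-1$ and not shifted; this is routine, and it is precisely here and in the compactification that the hypotheses of the lemma (the shape of the degree-$m$ part and global non-degeneracy, via Proposition~\ref{syuucyuu}) are used.
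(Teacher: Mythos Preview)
Your setup coincides with the paper's: compactify to $X=\overline{V}\subset\Pn$ with unique singular point $0$, compare $\RG(X;\ICXT)$ with $\RGC(X;\Q_X)$, and feed in the nearby/vanishing triangle $(\DER f_!\Q^H)_0\to\psi_t\DER f_!\Q^H\to\phi_t\DER f_!\Q^H\xrightarrow{+1}$ together with Stapledon's Proposition~\ref{syuucyuu}. Your reductions in the first two steps are correct: for $r\le n-3$ one indeed gets $\dim\GR^W_rH^{n-2}((\ICVT)_0)=\dim\GR^W_rH^{n-1}_c(V)$, and $H^{n-1}_c(V)=\ker(\mathrm{can})$ as mixed Hodge structures.

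The gap is exactly where you locate it, but your proposed resolution does not work. You write that ``its $r$-th weight-graded piece matches a graded piece of $\ker(N)$ inside $H^{n-1}(F_0)_1$''. But $\ker(\mathrm{can})$ sits in $H^{n-1}_c(f^{-1}(\epsilon))_1$ (the \emph{global} unipotent nearby cycle), whereas $\ker(N)\subset H^{n-1}(F_0)_1$ lives in $\phi$; the identity $\mathrm{var}\circ\mathrm{can}=N$ only gives $\ker(\mathrm{can})\subset\ker(N_\psi)$ on $\psi$, not a comparison with $\ker(N_\phi)$ on $\phi$. Even granting some identification, your own weight count gives $\dim\GR^W_r\ker(N_\phi)=J^1_{n-r+1}$, off by $2$ from the target $J^1_{n-r-1}$; the ``Tate twist reflecting the weight $\le 0$ normalization of $\ICVT$'' you invoke does not exist in your chain of reductions (no functor you applied introduces a twist). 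In short, you have not computed $\GR^M_r\ker(\mathrm{can})$, and to do so directly you would need to know $\dim\GR^M_rH^{n-1}_c(f^{-1}(\epsilon))_1$ and the rank of $\mathrm{can}$ on each graded piece, neither of which is available from the ingredients you list.

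The paper sidesteps this by never computing individual graded pieces. It packages everything into virtual Poincar\'e polynomials and isolates $\VP((\ICVT)_0)$ by a symmetry argument: the contributions $Q_1$ (from $\GR^M\GR^W_{n-1}$ via Proposition~\ref{syuucyuu}), $Q_2=\VP(\RGC(X_\epsilon;\Q))$ (smooth projective), and $Q_3$ (the $\lambda\neq 1$ part of $H^{n-1}(F_0)$) are all symmetric centered at degree $n-1$, as is $\VP(\RGC(X;\ICXT))$ by Poincar\'e duality; only $Q_4$ (the $\lambda=1$ part of $H^{n-1}(F_0)$) is symmetric centered at $n$. Truncating at degree $n-1$ and reflecting then cancels $Q_1,Q_2,Q_3$ and leaves precisely the asymmetry of $Q_4$, which is $\sum_r J^1_{n-r-1}T^r$. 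This is what produces the shift by $2$ you were missing: it comes from the difference of symmetry centers ($n$ versus $n-1$), not from any Tate twist. If you want to salvage your weight-by-weight approach, you would in effect have to reproduce this cancellation, which amounts to the paper's argument.
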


\begin{proof}
Let $g\in\Pozn$ be the projectivization of $f$.
Namely we set $g=x_{0}^{m}f(x_{1}/x_{0},\dots,x_{n}/x_{0})$.
Define a hypersurface $X\subset \Pn$ of $\Pn$ by
$X:=\{x\in\Pn\mid g(x)=0\}$.
Then under the identification $\{[x_0:\cdots:x_n]\in\Pn\mid x_0\neq0\}\simeq\C^n$,
we have $X\cap\C^n=V$.
We decompose $\Pn$ into a disjoint union of small tori as
$\Pn=\bigsqcup_{\emptyset\neq J\subset\{0,\dots,n\}} T_{J}$, where we set 
\[T_{J}:=\{[x_{0} : \dots : x_{n}]\in \Pn \mid x_{i}=0~(i\notin J)~ x_{i}\neq 0~(i\in J)\}\simeq (\TR)^{|J|-1},\]
for $\emptyset\neq J\subset\{0,\dots,n\}$.
By the non-degeneracy of $f$, for any $J\neq \emptyset$, the torus $T_{J}$ intersects $X$ transversally. 
Thus there is only one singular point $p=[1:0:\dots:0]$ in the hypersurface $X\subset\Pn$.

Consider the following distinguished triangle in $\DCB({\SHM}^{p})$:
\begin{align}\label{dt1}
\RGC(X\setminus\{p\};\ICXT)\rightarrow\RGC(X;\ICXT)\rightarrow\RGC(\{p\};\ICXT)\xrightarrow{+1}.
\end{align}
By $(\ICXT)|_{X\setminus\{p\}}=\Q_{X\setminus\{p\}}$, we have
\[\RGC(X\setminus\{p\};\ICXT)=\RGC(X\setminus\{p\};\Q_{X\setminus\{p\}}).\]
Moreover, obviously we have
\[\RGC(\{p\};\ICXT)=\ICVTS.\]
Taking the virtual Poincar\'{e} polynomials of the terms in the distinguished triangle (\ref{dt1}),
we obtain
\begin{align}
\VP(\RGC(X;\ICXT))(T)
&=\VP(\RGC(X\setminus\{p\};\Q))(T)+\VP(\ICVTS)(T)\label{tocyuu}
\\&=\VP(\RGC(X;\Q))(T)-1+\VP(\ICVTS)(T).\notag
\end{align}

Consider the following distinguished triangle in $\DCB({\SHM}^p)$:
\begin{align}\label{dt2}
(\DER{f}_{!}\Q^H_{\C^n})_{0}\rightarrow
\psi_{t}(\DER{f}_{!}\Q^H_{\C^n})\rightarrow
\phi_{t}(\DER{f}_{!}\Q^H_{\C^n})\xrightarrow{+1},
\end{align}
where $t:\C\to\C$ is the identity map.
As objects in $\DCB(\MOD(\Q))$,
the first term is isomorphic to $\RGC(V;\Q)$,
and the second one is isomorphic to $\RGC(V_{\epsilon};\Q)$,
where we set $V_{\epsilon}:=f^{-1}(\epsilon)$ for a sufficiently small $\epsilon>0$. 
By the next lemma, the third one is isomorphic to $H^{n-1}(F_0;\Q)[-(n-1)]$,
where $F_{0}$ is the Milnor fiber of $f$ at $0$ (see Section~\ref{milmono}).

\begin{lem}[cf. \cite{MTWS}]\label{vanishingkakan}
We have an isomorphism in $\DCB(\MOD(\Q))\colon$
\[\phi_{t}(\DER{f}_{!}\Q_{\C^n})\simeq \phi_{f}(\Q_{\C^n})_{0}.\]
\end{lem}
\begin{proof}
Let $\Gamma_{\infty}(f)$ be the convex hull of the union of $\{0\}$ and $\NP(f)$ in $\R^n$.
Let $\Sigma_{0}$ be the fan formed by all faces of $(\R_{\geq 0})^n$ in $\R^n$,
$\Sigma_{1}$ the dual fan of $\Gamma_{\infty}(f)$.
Since $f$ is convenient, $\Sigma_{0}$ is a subfan of $\Sigma_{1}$.
There exists a smooth subdivision $\Sigma$ of $\Sigma_{1}$ which contains $\Sigma_{0}$ as a subfan.
We denote by $X_{\Sigma}$ the toric variety associated to $\Sigma$.
We regard $f$ as an element of the function field of $X_{\Sigma}$,
and eliminate its indeterminacy by blowing up $X_{\Sigma}$ (see \cite[the proof of Theorem~3.6]{MTWS}).
Then we obtain a smooth compact variety $\wt{X_{\Sigma}}$ and a proper morphism $\pi\colon \wt{X_{\Sigma}}\to X_{\Sigma}$
such that $g:=f\circ\pi$ has no point of indeterminacy.
Then $g$ can be considered as a proper map from $\wt{X_{\Sigma}}$ to $\mathbb{P}^{1}$.
The restriction $g^{-1}(\C)\to\C$ of $g$ is also proper,
and we use the same symbol $g$ for this restriction map.
We thus obtain a commutative diagram:
\[
\xymatrix{
\C^n \ar[d]_{f} \ar@{^{(}-{>}}[r]^{\iota\quad}  & g^{-1}(\C) \ar[d]^{g}  \\
\C \ar@{=}[r]^{t}& ~\C~, \\
}
\]
where $\iota$ is the inclusion map.
By \cite[Proposition~4.2.11]{DIMCA} and the properness of $g$,
we obtain the following isomorphism:
\begin{align*}
\RG(g^{-1}(0);\phi_{g}(\iota_{!}\Q_{\C^n}))&\simeq \phi_{t}(\DER{g}_{*}\iota_{!}\Q_{\C^n})
\\&\simeq \phi_{t}(\DER{f}_{!}\Q_{\C^n}).
\end{align*}
It follows from the construction of $\wt{X_{\Sigma}}$
that the support of $\phi_{g}(\iota_{!}\Q_{\C^n})$ does not intersect $\wt{X_{\Sigma}}\setminus\C^n$ (see~\cite[the proof of Theorem~3.6]{MTWS}). 
Moreover, since $V=f^{-1}(0)\subset \C^n$ has only one singular point $0$,
the support of $\phi_{g}(\iota_{!}\Q_{\C^n})$ is \{0\}.
Furthermore, obviously we have $\phi_{g}(\iota_{!}\Q_{\C^n})_{0}\simeq\phi_{f}(\Q_{\C^n})_{0}$.
Combining these results, we finally obtain
\begin{align*}
\phi_{t}(\DER{f}_{!}\Q_{\C^n})\simeq
\RG(g^{-1}(0);\phi_{g}(\iota_{!}\Q_{\C^n}))\simeq\phi_{g}(\iota_{!}\Q_{\C^n})_{0}\simeq\phi_{f}(\Q_{\C^n})_{0}.
\end{align*}
\end{proof}

The $k$-th cohomology group $H^{k}_c(V;\Q)$ of $\RGC(V;\Q)$ has Deligne's mixed Hodge structure (see Theorem~\ref{delmixed}).
The $k$-th cohomology group $H^{k}_c(V_{\epsilon};\Q)$ of $\RGC(V_{\epsilon};\Q)$ has the limit mixed Hodge structure which encodes some informations of the monodromy action
(see Theorem~\ref{limitmixed}).
Moreover the $(n-1)$-st cohomology group $H^{n-1}(F_0;\Q)$ of the Milnor fiber $F_{0}$ of $f$ at $0$ has the limit mixed Hodge structure which encodes some informations of the Milnor monodromy $\Phi_{0,n-1}$ (see Subsection~\ref{milmono}).
On the other hand, the cohomology group of $\RGC(V_{\epsilon};\Q)$ is endowed with also Deligne's mixed Hodge structure.
In what follows, we use the symbol $\RGC(f^{-1}(\epsilon);\Q)_{\DEL}$ to express $\RGC(V_{\epsilon};\Q)$ with Deligne's mixed Hodge structure,
and the symbol $\RGC(V_{\epsilon};\Q)_{\LIM}$ to express $\RGC(V_{\epsilon};\Q)$ with the limit mixed Hodge structure.

Taking the virtual Poincar\'e polynomials of the terms in the distinguished triangle (\ref{dt2}), we obtain
\begin{align}\label{siki2}
\VP(\RGC(V,\Q)(T)
=\VP(\RGC(V_{\epsilon};\Q)_{\LIM})(T)-\VP(H^{n-1}(F_0;\Q)[-(n-1)])(T).
\end{align}
Decompose $\VP(\RGC(X;\Q))(T)$ into $\VP(\RGC(V;\Q))(T)$ and $\VP(\RGC(X\setminus V;\Q))(T)$ and by using (\ref{siki2}), we have
\begin{align*}
\VP(\RGC(X;\Q))(T)
&=\VP(\RGC(V;\Q))(T)+\VP(\RGC(X\setminus V;\Q))(T)\notag
\\&=\VP(\RGC(V_{\epsilon};\Q)_{\LIM})(T)-\VP(H^{n-1}(F_0;\Q)[-(n-1)])(T)\notag\\
&\quad+\VP(\RGC(X\setminus V;\Q))(T).
\end{align*}
Putting this into (\ref{tocyuu}), we obtain
\begin{align}\label{VPICsiki}
\VP(\RGC(X;\ICXT))(T)=\VP(\RGC(V_{\epsilon};\Q)_{\LIM})(T)-\VP(H^{n-1}(F_0;\Q)[-(n-1)])(T)
\\+\VP(\RGC(X\setminus V;\Q))(T)-1+\VP(\ICVTS)(T).\notag
\end{align}

First, we examine the first term in this equation. 
We denote by $W_{\bullet}$ Deligne's weight filtration of $H^{n-1}_c(V_{\epsilon};\C)$,
and by $M_{\bullet}$ the relative monodromy filtartion, respectively (see Section~\ref{mhs}).
Then for $q\neq n-1$, we have
\begin{align*}
\dim{{\GR^{M}_{q}}H^{n-1}_c(V_{\epsilon};\C)}
&=\sum_{r\in\Z}{\dim{{\GR^M_{q}}{\GR^W_{r}}H^{n-1}_c(V_{\epsilon};\C)}}
\\&=\dim{{\GR^M_{q}}{\GR^W_{q}}{H^{n-1}_c(V_{\epsilon};\C)}}
\\&\quad+\dim{{\GR^M_{q}}{\GR^W_{n-1}}{H^{n-1}_c(V_{\epsilon};\C)}}
\\&=\dim{{\GR^W_{q}}{H^{n-1}_c(V_{\epsilon};\C)}}
\\&\quad+\dim{{\GR^M_{q}}{\GR^W_{n-1}}{H^{n-1}_c(V_{\epsilon};\C)}}
\end{align*}
Here, the second and third equalities follow from Proposition~\ref{syuucyuu}.
According to the weak Lefschetz type theorem (see \cite[Proposition 3.9]{DK}),
we have $H^{k}_c(V_{\epsilon};\C)=0$ for any $k\neq n-1,2(n-1)$.
The monodromy action on $H^{2(n-1)}_c(V_{\epsilon};\C)\simeq \C$ is trivial,
and hence $\dim{{\GR^M_{2(n-1)}}}H^{2(n-1)}_c(V_{\epsilon};\Q) = 1$ and $\dim{{\GR^M_{q}}}H^{2(n-1)}_c(V_{\epsilon};\Q)=0$ for any $q\neq 2(n-1)$.
We thus obtain
\begin{align}
\VP(\RGC(V_{\epsilon};\Q)_{\LIM})(T)
&=\sum_{i\in\Z}(-1)^{n-1}\dim{\GR^M_{i}}H^{n-1}_c(V_{\epsilon};\Q)T^{i}+T^{2(n-1)}
\\&=\sum_{n-1\neq i\in\Z}(-1)^{n-1}(\dim{\GR^W_{i}}H^{n-1}_c(V_{\epsilon};\Q)\notag
\\&\qquad+\dim{\GR^M_{i}}{\GR^W_{n-1}}H^{n-1}_c(V_{\epsilon};\Q))T^{i}\notag
\\&\quad+(-1)^{n-1}\dim{\GR^M_{n-1}}H^{n-1}_c(V_{\epsilon};\Q)T^{n-1}
+T^{2(n-1)}\notag
\\&=\sum_{i\in\Z}(-1)^{n-1}{\dim{\GR^W_{i}}H^{n-1}_c(V_{\epsilon};\Q)}T^{i}\notag
\\&\quad+T^{2(n-1)}+Q_1(T)\notag
\\&=\VP(\RGC(f^{-1}(\epsilon);\Q)_{\DEL})(T)\notag
+Q_1(T),\notag
\end{align}
where we set
\begin{align*}
Q_1(T) :=& (-1)^{n-1}\sum_{n-1\neq i\in\Z}{\dim{{\GR^M_{i}}{\GR^W_{n-1}}{H^{n-1}_c(V_{\epsilon};\C)}}}T^{i}
\\&+ (-1)^{n-1}\dim{\GR^M_{n-1}}H^{n-1}_c(V_{\epsilon};\Q)T^{n-1}
\\&-(-1)^{n-1}\dim{\GR^W_{n-1}}H^{n-1}_c(V_{\epsilon};\Q)T^{n-1}.
\end{align*}
Since the polynomial $\sum_{i\neq n-1}{\dim{{\GR^M_{i}}{\GR^W_{n-1}}{H^{n-1}_c(V_{\epsilon};\C)}}}T^{i}$ has a symmetry centered at the degree $n-1$ (see Section~\ref{mhs}),
$Q_1(T)$ is a symmetric polynomial centered at the degree $n-1$.
The projectivization of $f-\epsilon\in\C[x_1,...,x_n]$ is a homogeneous polynomial $g-\epsilon x^m_0\in\C[x_0,...,x_n]$.
We denote by $X_{\epsilon}$ the hypersurface in $\Pn$ defined by this polynomial.
Then we have $X_{\epsilon}\cap\C^n=V_{\epsilon}$.
Recall that we decomposed $\Pn$ into small tori.
Since the intersection of each torus and $X_{\epsilon}$ is smooth,
$X_{\epsilon}$ is smooth in $\Pn$.
We define the hyperplane $\mathbb{L}$ of $\Pn$ by $\mathbb{L}:=\{[x_0:\cdots:x_n]\in\Pn\ |\ x_0=0\}\simeq \PR^{n-1}$.
Then we have $\PR^n=\C^n\sqcup\mathbb{L}$ and $X_{\epsilon}\cap\mathbb{L}=X\cap\mathbb{L}$.
By $f^{-1}(\epsilon)\sqcup (X\cap\mathbb{L})=X_{\epsilon}$, we have
\begin{align*}
\VP(\RGC(f^{-1}(\epsilon);\Q)_{\DEL})(T)+\VP(\RGC(X\cap\mathbb{L};\Q))(T)=\VP(\RGC(X_{\epsilon};\Q))(T).
\end{align*}
We set $Q_{2}(T):=\VP(\RGC(X_{\epsilon};\Q))(T)$.
Since $X_{\epsilon}$ is a smooth hypersurface in $\Pn$, the polynomial $Q_{2}(T)$ has a symmetry centered at the degree $n-1$ by the Poincar\'{e} duality.
Note that $X\cap\mathbb{L}=X\setminus V$.
Thus the sum of the first and third terms of the right hand side of the equation (\ref{VPICsiki}) is calculated as:
\begin{align*}
&\VP(\RGC(V_{\epsilon};\Q)_{\LIM})(T)+\VP(\RGC(X\cap\mathbb{L};\Q))(T)
\\&=\VP(\RGC(X_{\epsilon};\Q))(T)+Q_1(T)\notag
\\&=Q_{2}(T)+Q_{1}(T).\notag
\end{align*}

Next, we examine the second term of the right hand side of the equation (\ref{VPICsiki}).
For a Laurent polynomial $\VP(T)=\sum_{i\in\Z}a_{i}T^{i}\in\Z[T^{\pm}]$ of one variable with coefficients in $\Z$ and
any integer $i_{0}\in\Z$, 
we define the two polynomials:
\begin{align*}
\trun_{\leq i_{0}}\VP(T):=\sum_{i\leq i_{0}}a_{i}T^{i}\text{ \ and \ } \trun_{\geq i_{0}}\VP(T):=\sum_{i\geq i_{0}}a_{i}T^{i}.
\end{align*}
Let $Q_{3}(T)\in\Z[T]$ be the symmetric polynomial centered at $n-1$ satisfying the condition
\[\trun_{\leq n-1}{Q_{3}}(T)=(-1)^{n-1}\sum_{\lambda\neq1}(\sum_{i=0}^{n-1}(\sum_{s\geq0}J^{\lambda}_{n-i+2s})T^i).\]
Moreover we also define the symmetric polynomial $Q_{4}(T)\in\Z[T]$ centered at $n$
by the condition
\[\trun_{\leq n}{Q_{4}}(T)=(-1)^{n-1}\sum_{i=2}^{n}(\sum_{s\geq0}J^{1}_{n+1-i+2s})T^i.
\]
By Proposition~\ref{milnohyou}, we have
\begin{align*}
\VP(H^{n-1}(F_0;\Q)[-(n-1)])(T)=Q_{3}(T)+Q_{4}(T).
\end{align*}
Then we can rewrite the equation (\ref{VPICsiki}) as,
\begin{align}\label{siki7}
\VP(\RGC(X;\ICXT))(T)
&=Q_{1}(T)+Q_{2}(T)-(Q_{3}(T)+Q_{4}(T))
\\&-1+\VP(\ICVTS)(T).\notag
\end{align}
As we saw in Section~\ref{icandmilnor},
the polynomial $\VP(\ICVTS)(T)$ has a degree $\leq n-2$.
Thus we have
\begin{align*}
&\trun_{\geq n-1}\VP(\RGC(X;\ICXT))(T)&
\\&=\trun_{\geq n-1}(Q_{1}(T)+Q_{2}(T)-Q_{3}(T)-Q_{4}(T)).
\end{align*}
Since $\VP(\RGC(X;\ICXT))(T)$ has a symmetry centered at the degree $n-1$
by the generalized Poincar\'e duality,
we also have
\begin{align}\label{siki8}
&\trun_{\leq n-1}\VP(\RGC(X;\ICXT))(T)
\\&=T^{2(n-1)}\trun_{\geq n-1}(Q_{1}(T')+Q_{2}(T')-Q_{3}(T')-Q_{4}(T'))|_{T'=T^{-1}}.\notag
\end{align}
Then by taking the truncations $\trun_{\leq {n-1}}(*)$ of the both sides of (\ref{siki7}),
we obtain
\begin{align*}
&\VP(\ICVTS)(T)
\\&\quad=1+T^{2(n-1)}\trun_{\geq n-1}(Q_{1}(T')+Q_{2}(T')-Q_{3}(T')-Q_{4}(T'))|_{T'=T^{-1}}
\\&\quad\quad-\trun_{\leq n-1}(Q_1(T)+Q_2(T)-Q_3(T)-Q_4(T))
\\&\quad=1+\trun_{\geq n-1}(-Q_4(T'))|_{T'=T^{-1}}-\trun_{\leq n-1}(-Q_4(T))
\\&\quad=1+(-1)^{n-2}\sum_{s=0}^{n-2}J^{1}_{n-s+1}T^s ,
\end{align*}
where the second equality follows from the symmetries centered at the degree $n-1$ of $Q_1(T),Q_2(T),Q_3(T)$.
This completes the proof of Lemma~\ref{mainlem}.
\end{proof}

As we will see in the following Proposition~\ref{subetedoukei},
the coefficients of $f$ in $\Gamma_{+}(f)\setminus \Gamma_{f}$ do not affect the mixed Hodge structures of the IC stalk.
Fix a polytope $P\subset \Gamma_{+}(f)$ such that the union of bounded faces of
$\Conv (\bigcup_{\alpha\in P\cap\Z^n}(\alpha+\R^{n}_{\geq 0}))$ is equal to $\Gamma_{f}$. 
Set $l:=\#\{P \cap\Z^n\}$ and identify $\C^{l}$ with $\C^{P\cap\Z^n}$.
For $y=(y_{\alpha})\in\C^{P\cap\Z^n}$,
set $\sigma_{y}f:=\sum_{\alpha}(a_{\alpha}+y_{\alpha})x^{\alpha}$ (when $\alpha\not\in P$, we set $y_{\alpha}=0$)
and denote by $V_{y}$ the zero set of $\sigma_{y}f$ in $\C^n$.
Let $\Omega\subset\C^l$ be the subset of $\C^l$ consisting $y\in\C^{l}$
such that $\Gamma_{\sigma_{y}f}$ is equal to $\Gamma_{f}$ and $\sigma_{y}f$ is non-degenerate at $0\in\C^{n}$.
Note that $\Omega$ is a Zariski open and hence path-connected subset of $\C^{l}$.
Under this setting, we have the following result.

\begin{prop}\label{subetedoukei}
For any $y\in \Omega$ and $k\in \Z$,
the mixed Hodge numbers of $H^{k}((\IC_{V_{y}})_0)$ is the same as those of $H^{k}((\ICV)_{0})$.
\end{prop}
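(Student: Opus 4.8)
The plan is to use the connectedness of the parameter space $\Omega$ together with a local constancy argument for the mixed Hodge numbers of the IC stalks along the family $\{V_y\}_{y \in \Omega}$. First I would set up the relative version of the situation: consider the product $\C^n \times \Omega$ with the function $F(x,y) := \sigma_y f(x)$, and let $\mathcal{V} \subset \C^n \times \Omega$ be its zero locus, with the projection $p \colon \mathcal{V} \to \Omega$. The section $s \colon \Omega \to \mathcal{V}$, $y \mapsto (0,y)$ picks out the singular point in each fiber $V_y = p^{-1}(y)$. The key point, which I would establish from the definition of $\Omega$, is that $F$ is convenient and non-degenerate at $0$ fiberwise, with the same Newton boundary $\Gamma_f$ for every $y \in \Omega$; hence $0$ is the only singular point of each $V_y$, and moreover the restriction of $p$ to $\mathcal{V} \setminus s(\Omega)$ is a topologically (indeed, smoothly) locally trivial fibration over $\Omega$. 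This is where the non-degeneracy and the fixed Newton boundary are used: the torus-stratification argument in the proof of Lemma~\ref{mainlem} shows the stratified-boundary behaviour is constant in $y$.

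Next I would invoke Morihiko Saito's theory: the complex of mixed Hodge modules $s^* \mathrm{IC}^H_{\mathcal{V}/\Omega}$ (suitably shifted), restricted to the fiber over $y$, computes $(\widetilde{\mathrm{IC}_{V_y}})_0$ as a complex of mixed Hodge structures, by base change. What I want is that the cohomology sheaves of $s^*\mathrm{IC}_{\mathcal{V}}[-n]$, together with their weight and Hodge filtrations, form admissible variations of mixed Hodge structure on $\Omega$; then their mixed Hodge numbers $h^{p,q}$ are constant on the connected space $\Omega$, giving the claim. To get there I would use Corollary~\ref{korituic} in the relative form: $\mathrm{IC}_{\mathcal{V}} \simeq \tau^{\leq n-1} \mathrm{R}j_* \Q_{\mathcal{V}_{\mathrm{reg}}}$, and the local triviality of $\mathcal{V}\setminus s(\Omega) \to \Omega$ implies that $\mathrm{R}(j\circ p')_*\Q$ has locally constant cohomology sheaves along $\Omega$, where $p'$ is the projection restricted to the smooth locus. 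Taking the stalk at $(0,y)$ then yields local systems on $\Omega$ underlying variations of MHS, and constancy of the Hodge numbers follows.

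An alternative, and perhaps cleaner, route — which I would actually prefer to write up — is to run the virtual-Poincar\'e-polynomial bookkeeping of Lemma~\ref{mainlem} in the relative setting and observe that every ingredient depends only on $\Gamma_f$, hence is constant on $\Omega$. However, the virtual polynomial only records the alternating sum $\sum_k (-1)^k \dim \GR^W_r H^k$, not the individual mixed Hodge numbers $h^{p,q}$, so this alone is insufficient for the full strength of the proposition. The main obstacle, then, is precisely upgrading from "Euler-characteristic-type invariants are constant" to "the mixed Hodge numbers themselves are constant": this requires the variation-of-MHS argument above, i.e. knowing that $s^*\mathrm{IC}_{\mathcal{V}}[-n]$ genuinely underlies an admissible VMHS over $\Omega$ and not merely a constructible complex with constant Euler characteristics. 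I expect this step to rest on the good behaviour of mixed Hodge modules under the smooth base $\Omega$ and on the fact that, by Proposition~\ref{icstalkjigen}, only the degrees $0$ and $n-2$ carry cohomology, with $H^0$ always $\Q(0)$; so the real content is the constancy of the Hodge numbers of the single group $H^{n-2}$, which is controlled by the limit mixed Hodge structure of the Milnor monodromy, and the Milnor monodromy (with its limit MHS) of $\sigma_y f$ is determined by $\Gamma_f$ by Theorem~\ref{jordangammadekimaru} together with the constancy of the limit MHS in a non-degenerate family with fixed Newton boundary.
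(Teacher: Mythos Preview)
Your overall strategy matches the paper's exactly: build the total family $\wt V\subset\C^{n+l}$, show that $\IC_{\wt V}$ restricted to the singular section $\{0\}\times\Omega$ has locally constant cohomology, deduce via Saito's theory that the cohomology mixed Hodge modules along $\{0\}\times\Omega$ are admissible variations of mixed Hodge structure on the connected base $\Omega$, and conclude constancy of the mixed Hodge numbers.

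The gap is in your justification of local constancy. You claim that $p\colon\mathcal V\setminus s(\Omega)\to\Omega$ is a locally trivial fibration, but this map is not proper --- the fibres $V_y\setminus\{0\}$ are open affine hypersurfaces, and since $P$ may contain lattice points outside $\Gamma_f$ the behaviour of $V_y$ away from $0$ can genuinely change with $y$ --- so Ehresmann does not apply and the global statement is not available. The paper instead proves only what is actually needed for the IC stalk: a parametric Milnor cone theorem (Lemma~\ref{cone}) asserting that for relatively compact contractible $U\subset\Omega$ and small $\epsilon$ the set $\wt V\cap(B(0,\epsilon)^{*}\times U)$ is homotopic to a single link $(V_y\cap\partial D(0,\epsilon))\times\{y\}$, uniformly in $y\in U$. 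Combined with a compactness argument (Lemma~\ref{koritu}) ensuring that $0$ is the unique nearby singular point in every fibre over a compact set, this lets one compare the stalk of $\DER j_{*}\Q_{\wt V_{\REG}}$ at $(0,y)$ with its sections over $\{0\}\times K$ for any closed ball $K\subset\Omega$, yielding local constancy directly (Proposition~\ref{teisuusou}). After that, your final VMHS step (integrable connection, hence graded pieces are locally free, hence Hodge numbers constant) is exactly what the paper does. The appeal to Theorem~\ref{jordangammadekimaru} in your last paragraph is unnecessary, and using it to control the mixed Hodge structure of the IC stalk would in any case be circular for the application to Theorem~\ref{mainth}.
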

This Proposition will be proved in Section~\ref{sechenkei}.
By Proposition~\ref{subetedoukei}, the proof of Theorem~\ref{mainth} is reduced to the case in Lemma~\ref{mainlem} as follows. 

\begin{proof}[Proof of Theorem~\ref{mainth}]
If the polynomial $f$ has some linear terms, $V$ is non-singular at $0\in V$ and $\Phi_{0,n-1}=0$.
Thus, our assertion is trivial.
Therefore, in what follows, we assume that the polynomial $f$ has no linear term.
Since the non-degeneracy condition is a generic condition for each fixed Newton polytope,
there exist a polytope $P\subset \R^n_{\geq 0}$ and $y\in \Omega$ 
for which $\sigma_{y}f$ satisfies the conditions of Lemma~\ref{mainlem}.
By Proposition~\ref{subetedoukei},
the mixed Hodge numbers of $\wt{(\mathrm{IC}_{V_{y}})}_0$ and $\ICVTS$ are the same.
On the other hand, since both $\sigma_{y}f$ and $f$ are non-degenerate at $0$, convenient and satisfy the condition $\Gamma_{\sigma_{y}f}=\Gamma_{f}$,
the Jordan normal forms of the Milnor monodromies of $\sigma_{y}f$ and $f$ are the same (see Theorem~\ref{jordangammadekimaru}).
Then the assertion immediately follows from Lemma~\ref{mainlem}.
\end{proof}

For $n=2$, similarly we obtain the following theorem.
\begin{theo}\label{n2case}
Assume that $n=2$ and $f$ is convenient and non-degenerate at $0$.
Then for any $r\in\Z$, we have
\begin{align*}
\dim \GR^{W}_{r}H^{0}((\ICVT)_0) =
\left\{\renewcommand{\arraystretch}{1.5}
\begin{array}{ll}
N_{0}+1&\text{ if\ \ }r=0,\text{ and}\\
0&\text{ if\ \ }r\neq 0.
\end{array}
\right.
\end{align*}
\end{theo}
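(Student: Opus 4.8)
The plan is to reduce the statement to a single purity assertion and then verify it directly on the link. By Proposition~\ref{icstalkjigen}, for $n=2$ the cohomology of $\ICVTS$ is concentrated in degree $0$, with $\dim H^{0}(\ICVTS)=N_{0}+1$, and by Proposition~\ref{weight} (see the end of Section~\ref{MHM}) it has mixed weights $\leq 0$, so $\GR^{W}_{r}H^{0}(\ICVTS)=0$ for $r>0$. Hence Theorem~\ref{n2case} is equivalent to the assertion that $H^{0}(\ICVTS)$ is pure of weight $0$, or, since the cohomology sits in a single degree, that $\VP(\ICVTS)(T)$ is the constant $N_{0}+1$.

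To prove purity I would argue on the link. If $f$ has a linear term then $0$ is a smooth point of $V$, so $\ICVTS=\Q_{\{0\}}$, $N_{0}=0$, and there is nothing to prove; so assume $0$ is an isolated singular point of the plane curve $V$. As in the paragraph preceding Corollary~\ref{corlink}, applied with $k=0\leq n-2$ (the reasoning there does not use $n\geq 3$), the mixed Hodge structure $H^{0}(\ICVTS)$ is isomorphic to $H^{0}(L;\Q)$, where $L$ denotes the link of $0$ in $V$. Now $H^{0}(L;\Q)$ is the $0$-th cohomology of a sufficiently small punctured neighbourhood of $0$ in $V$, and such a neighbourhood is a disjoint union of $b$ punctured discs, one for each local analytic branch of $V$ at $0$; consequently $H^{0}(L;\Q)\cong\Q(0)^{\oplus b}$, which is pure of weight $0$. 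Combined with $\dim H^{0}(\ICVTS)=N_{0}+1$ from Proposition~\ref{icstalkjigen}, this forces $b=N_{0}+1$ and completes the proof.

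An alternative, more in the spirit of ``similarly'', is to imitate the proof of Theorem~\ref{mainth}: first reduce to the special situation of Lemma~\ref{mainlem} by the $n=2$ analogue of Proposition~\ref{subetedoukei}, and then repeat the computation of Lemma~\ref{mainlem}. All of its ingredients---Proposition~\ref{syuucyuu}, the vanishing $H^{k}_{c}(f^{-1}(\epsilon);\C)=0$ for $k\neq 1,2$, and the $T\mapsto T^{-1}$ symmetries of the virtual Poincar\'e polynomials of $\RGC(X;\ICXT)$, of the smooth projective hypersurface $X_{\epsilon}$ attached to $f-\epsilon$, and of $H^{n-1}(F_{0};\Q)$---remain available when $n=2$; carrying this out, and using that by Theorem~\ref{month} the monodromy $\Phi_{0,1}$ has only size-$1$ Jordan blocks for the eigenvalue $1$ (so $J^{1}_{s}=0$ for $s\geq 2$ and $N_{0}=J^{1}_{1}$), one recovers $\VP(\ICVTS)(T)=1+(-1)^{n-2}\sum_{s=0}^{n-2}J^{1}_{n-s-1}T^{s}=1+J^{1}_{1}=N_{0}+1$. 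I expect the main obstacle on this second route to be purely organisational: one must check that Proposition~\ref{subetedoukei} and its proof in Section~\ref{sechenkei}, stated there for $n\geq 3$, go through unchanged for $n=2$, and that none of the truncation and symmetry manipulations of Lemma~\ref{mainlem} degenerate when the top degree $n-2$ of $\VP(\ICVTS)$ collapses to $0$. On the first route the only non-formal point is the purity of $H^{0}(L;\Q)$, which is standard (it also follows from the Poincar\'e duality of~\cite{DurfeeSaito} once one knows that $H^{1}(L;\Q)\cong\Q(-1)^{\oplus b}$ is pure of weight $2$).
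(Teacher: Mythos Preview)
Your proposal is correct, and both routes you outline go through.

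The paper's own argument is literally your second route: the theorem is stated with the preamble ``For $n=2$, similarly we obtain\ldots'', meaning one reruns Lemma~\ref{mainlem} and Proposition~\ref{subetedoukei} with $n=2$. Your caution about Proposition~\ref{subetedoukei} being restricted to $n\geq 3$ is unnecessary --- nothing in Section~\ref{sechenkei} uses that hypothesis --- and the truncation/symmetry manipulations in Lemma~\ref{mainlem} specialise cleanly: for $n=2$ one finds $Q_{4}(T)=-J^{1}_{1}T^{2}$ (symmetric about degree $n=2$), and the final line yields $1+J^{1}_{1}=1+N_{0}$ as you say. Your first route, by contrast, is genuinely different and more elementary: arguing directly that $H^{0}(\ICVTS)\simeq H^{0}(L;\Q)$ is pure of weight $0$ because the link of a plane-curve singularity is a disjoint union of circles. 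One can make this algebraic (avoiding any worry about analytic versus algebraic MHS) by using the normalization $\pi\colon\widetilde{V}\to V$: for curves $\ICV\simeq\pi_{*}\Q_{\widetilde{V}}[1]$, so $(\ICVT)_{0}\simeq\bigoplus_{p\in\pi^{-1}(0)}\Q(0)$, visibly pure of weight $0$. This buys you a self-contained proof that does not even require the convenience or non-degeneracy hypotheses on $f$; the paper's route buys uniformity with the $n\geq 3$ case and reuses the existing machinery.
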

\renewcommand{\arraystretch}{1}

\section{Proof of Proposition~\ref{subetedoukei}}\label{sechenkei}
In this section, we prove Proposition~\ref{subetedoukei}.
Let $f=\sum_{\alpha\in\R^n_{\geq 0}}a_{\alpha}x^{\alpha}\in\C[x_{1},\dots,x_{n}]$ be a non-constant polynomial of $n$ variables with coefficients in $\C$ such that $f(0)=0$.
Assume that $f$ is convenient and non-degenerate at $0$,
and the hypersurface $V=\{x\in\C^n\ |\ f(x)=0\}$ has an isolated singular point $0$.
In what follows, we use the notations in Proposition~\ref{subetedoukei},
such as $P$, $l$, $\Omega$, $\sigma_{y}f$, $V_{y}$ $(y=(y_{\alpha})\in\C^{l}\simeq\C^{P\cap\Z^n})$, etc.
Define a polynomial $\wt{f} \in \C[ x_1, \dots, x_n, (y_{\alpha})_{\alpha\in P\cap\Z^n}]$ of $n+l$ variables by $\wt{f}(x_1,\dots,x_n,(y_{\alpha})_{\alpha\in P\cap\Z^n}):=\sum_{\alpha}(a_{\alpha}+y_{\alpha})x^{\alpha}$
(if $\alpha\not\in P$, we set $y_{\alpha}=0$).
We denote by $\wt{V}$ the zero set of $\wt{f}$ in $\C^{n+l}$.
Note that $\sigma_{y}f\in\C[x_{1},\dots,x_{n}]$ is identified with $\wt{f}(x_{1},\dots,x_{n},(y_{\alpha}))$ for any fixed element $y=(y_{\alpha})\in\C^{l}$.

\begin{lem}\label{koritu}
For any compact subset $K$ of $\Omega$,
there is a neighborhood $U$ of $0$ in $\C^n$
such that for any $y\in K$, 
the origin $0\in V_{y}$ is the only singular point of $V_{y}$ in $U$.
\end{lem}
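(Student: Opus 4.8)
The plan is to encode the whole family $\{V_{y}\}_{y\in\C^{l}}$ in one algebraic set and then combine a compactness argument with the curve selection lemma.

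First I would introduce the \emph{relative critical set}
\[
S:=\{(x,y)\in\C^{n}\times\C^{l}\mid \wt{f}(x,y)=0,\ \partial_{x_{1}}\wt{f}(x,y)=\dots=\partial_{x_{n}}\wt{f}(x,y)=0\},
\]
a Zariski closed subset of $\C^{n+l}$. For fixed $y$, at any point $x$ with $(x,y)\notin S$ the function $\sigma_{y}f$ either does not vanish or vanishes with nonzero gradient, so $\SING(V_{y})\subseteq\{x\mid(x,y)\in S\}$. Set $A:=S\cap\big((\C^{n}\setminus\{0\})\times\C^{l}\big)$, a semialgebraic set, and let $\overline{A}$ be its Euclidean closure. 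The lemma will follow once I prove the claim that $\overline{A}\cap(\{0\}\times\Omega)=\emptyset$: granting this, $\{0\}\times K$ is compact and disjoint from the closed set $\overline{A}$, hence at Euclidean distance $\geq\delta$ from it for some $\delta>0$, and $U:=\{x\in\C^{n}\mid|x|<\delta\}$ does the job, since a point $x\in U\setminus\{0\}$ with $(x,y)\in S$ for some $y\in K$ would place $(x,y)\in A$ within distance $|x|<\delta$ of $(0,y)\in\{0\}\times K$.

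To prove the claim I would argue by contradiction: suppose $(0,y_{*})\in\overline{A}$ with $y_{*}\in\Omega$, and apply the curve selection lemma (see \cite{MIL}) to $A$ to obtain a real-analytic arc $t\mapsto(x(t),y(t))$ on $[0,\varepsilon)$ with $(x(0),y(0))=(0,y_{*})$ and $(x(t),y(t))\in A$ for $t>0$. Shrinking $\varepsilon$, we have $y(t)\in\Omega$, so $\Gamma_{\sigma_{y(t)}f}=\Gamma_{f}$ throughout. Let $I:=\{i\mid x_{i}(t)\not\equiv 0\}\neq\emptyset$; using that $f$ is convenient, $\sigma_{y_{*}}f|_{\C^{I}}$ is again convenient and non-degenerate at $0$ and its Newton-boundary faces are faces of $\Gamma_{f}$, so after relabeling and replacing $\C^{n}$ by $\C^{I}$ I may assume $x_{i}(t)=c_{i}t^{w_{i}}(1+O(t))$ with $c_{i}\neq 0$, $w_{i}\in\Z_{>0}$ for all $i$. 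Let $w:=(w_{1},\dots,w_{n})>0$ and let $F$ be the face of $\Gamma_{+}(\sigma_{y_{*}}f)$ minimizing $\langle w,\,\cdot\,\rangle$; since $w>0$ and $\sigma_{y_{*}}f$ is convenient, $F$ is bounded, hence a face of $\Gamma_{\sigma_{y_{*}}f}=\Gamma_{f}$. Writing $\sigma_{y(t)}f=\sum_{\alpha}b_{\alpha}(t)x^{\alpha}$ (with $b_{\alpha}$ real-analytic and $b_{\alpha}(0)\neq0$ for $\alpha\in\supp(\sigma_{y_{*}}f)$) and substituting the arc, the lowest-order terms in $t$ of $\sigma_{y(t)}f(x(t))$ and of each $\partial_{x_{i}}(\sigma_{y(t)}f)(x(t))$ are, up to nonzero scalars, $(\sigma_{y_{*}}f)_{F}(c)$ and $\partial_{x_{i}}\big((\sigma_{y_{*}}f)_{F}\big)(c)$ respectively. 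Since $(x(t),y(t))\in S$ these vanish identically, so $c\in(\TR)^{n}$ is a singular point of $\{(\sigma_{y_{*}}f)_{F}=0\}$ in $(\TR)^{n}$, contradicting the non-degeneracy at $0$ of $\sigma_{y_{*}}f$ (Definition~\ref{nondegatzero}) since $F$ is a face of $\Gamma_{\sigma_{y_{*}}f}$; the claim follows.

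The real obstacle is this last step: because $y$ itself moves along the arc one cannot simply quote that $0$ is isolated in $\SING(V_{y_{*}})$, and must instead extract from the $t$-adic leading terms a singular point of a face polynomial of $\sigma_{y_{*}}f$ on the torus and then invoke non-degeneracy. The auxiliary points --- that the minimizing face $F$ is bounded and lies in $\Gamma_{f}$, that the varying coefficients $b_{\alpha}(t)$ cannot lower $t$-orders because $b_{\alpha}(0)\neq0$ on the relevant support, and that passing to $\C^{I}$ is harmless (here convenience of $f$ gives $\Gamma_{+}(\sigma_{y_{*}}f)\cap\R^{I}=\Gamma_{+}(\sigma_{y_{*}}f|_{\C^{I}})$, with bounded faces of the latter being bounded faces of $\Gamma_{f}$) --- are routine Newton-polyhedron bookkeeping that I would treat briefly.
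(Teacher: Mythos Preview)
Your argument is correct and complete; the curve-selection step, the reduction to the coordinate subspace $\C^{I}$, and the extraction of the face polynomial from the leading $t$-terms all go through as you outline (the crucial point that the varying coefficients $b_{\alpha}(t)$ with $b_{\alpha}(0)=0$ cannot contribute at order $t^{m}$ is handled by $\alpha\in P\subset\Gamma_{+}(f)$, so $\langle w,\alpha\rangle\geq m$).

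The paper proceeds differently. Instead of the curve selection lemma it takes a smooth subdivision $\Sigma$ of the dual fan of $\Gamma_{+}(f)$ and the associated proper toric modification $\pi\colon X_{\Sigma}\to\C^{n}$, which is an isomorphism off $\pi^{-1}(0)$. Assuming the lemma fails, one picks sequences $y_{i}\in K$ and singular points $x_{i}\in V_{y_{i}}$ with $0<|x_{i}|\to 0$; compactness of $K$ and of $\pi^{-1}(\overline{B(0,1)})$ yields subsequential limits $y_{\infty}\in K$ and $x_{\infty}\in\pi^{-1}(0)\subset X_{\Sigma}$. Non-degeneracy of $\sigma_{y_{\infty}}f$ at $0$ translates into transversality of the proper transform of $V_{y_{\infty}}$ with $\pi^{-1}(0)$, while the limit $x_{\infty}$ is forced to be a singular point of that intersection, a contradiction. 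The two proofs are really the same contradiction in different clothing: your weight vector $w$ plays the role of a one-parameter subgroup selecting a toric chart of $X_{\Sigma}$, and your face $F$ corresponds to the exceptional stratum hit by $x_{\infty}$; your equations $(\sigma_{y_{*}}f)_{F}(c)=0$, $\partial_{x_{i}}((\sigma_{y_{*}}f)_{F})(c)=0$ are exactly the local equations saying the proper transform is not transverse there. What your approach buys is self-containment---no toric resolutions are invoked, only Milnor's curve selection lemma and explicit order-of-vanishing bookkeeping---at the price of writing out that bookkeeping. The paper's approach is terser once one accepts the toric machinery, and its sequential-compactness framing makes the uniformity in $y\in K$ more transparent, but it leaves the final step (``$x_{\infty}$ is a singular point of $\pi^{-1}(V_{y_{\infty}})\cap\pi^{-1}(0)$'') to the reader, whereas you prove the analogous statement in full.
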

\begin{proof}
Fix a compact set $K$ in $\Omega$.
We denote by $\Sigma_{0}$ the dual fan of $\Gamma_{+}(f)$ in $\R^n$.
Take a smooth subdivision $\Sigma$ of the fan $\Sigma_{0}$ without subdividing the cones in the boundary of $(\R_{\geq0})^n$ in $\R^n$.
Let $\Sigma'$ be the fan formed by all faces of $(\R_{\geq0})^n$ in $\R^n$.
We denote by $X_{\Sigma}$ and $X_{\Sigma'}$ the smooth toric varieties associated to $\Sigma$ and $\Sigma'$ respectively.
Note that $X_{\Sigma'}$ is isomorphic to $\C^n$.
The identity map $\R^n\to\R^n$ induces a morphism of fans from $\Sigma$ to $\Sigma'$,
and we obtain a proper morphism $\pi\colon X_{\Sigma}\to\C^n$.
Then $\pi$ induces an isomorphism $X_{\Sigma}\setminus \pi^{-1}(0)\simeq \C^n\setminus\{0\}$.
Suppose that there exits a sequence $(y_{i})_{i\in\N}$ in $K$ and a singular points $x_{i}$ of  $V_{y_{i}}\cap\C^n$ which satisfy $0<|x_{i}|<1/(i+1)$.
By using the isomorphism $X_{\Sigma}\setminus \pi^{-1}(0)\simeq \C^n\setminus\{0\}$, we can regard $x_{i}$ as an element of $X_{\Sigma}$.
Since $K$ is compact, we may assume that the sequence $(y_{i})_{i\in\N}$ converges to a point $y_{\infty}\in K$.
Moreover since the pullback of the closed ball $\overline{B(0,1)}$ in $\C^n$ by the proper map $\pi$ is compact, we may assume $(x_{i})_{i\in\N}$ converges to a certain point $x_{\infty} \in X_{\Sigma}$.
It is clear that $x_{\infty}\in \pi^{-1}(0)$.
Since $\sigma_{y_{\infty}}f$ is non-degenerate at $0$,
the hypersurface in $X_{\Sigma}$ defined by the pull back of $V_{y_{\infty}}$ by $\pi$
intersect with $\pi^{-1}(0)$ transversely.
However, one can easily see that
$x_{\infty}$ is a singular point of $\pi^{-1}(V_{y_{\infty}})\cap\pi^{-1}(0)$.
This is a contradiction and completes the proof.
\end{proof}

The next lemma is a modified version of the cone theorem \cite[Thorem~2.10]{MIL}
with some parameters.
For $\epsilon>0$, we denote by $D(0,\epsilon)$ the closed ball in $\C^n$ centered at $0$ with radius $\epsilon$, by $\partial D(0,\epsilon)$ its boundary,
and set $D(0,\epsilon)^{*}=D(0,\epsilon)\setminus\{0\}$.

\begin{lem}[cf. {\cite[Theorem~2.10]{MIL}}]\label{cone}
For a relatively compact open subset $U$ of $\Omega$,
$\wt{V}\cap (B(0,\epsilon)^{*}\times{U})$ is homotopic to $\wt{V}\cap (\partial D(0,\epsilon)^{*}\times U)$.
If moreover $U$ is contractible, 
$\wt{V}\cap (B(0,\epsilon)^{*}\times{U})$ is homotopic to $(V_{y}\cap\partial{D(0,\epsilon)})\times \{y\}$ for any point $y\in U$.
\end{lem}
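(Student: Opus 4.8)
The plan is to prove a parametrized version of Milnor's cone theorem. The crux is a \emph{uniform Milnor radius}: I will show there is $\epsilon_{0}>0$, with $D(0,\epsilon_{0})$ contained in the neighborhood produced by Lemma~\ref{koritu}, such that for every $0<\epsilon\le\epsilon_{0}$ and every $y$ in the compact closure $\overline{U}\subset\Omega$ the sphere $\partial D(0,\epsilon)$ meets $V_{y}$ transversally; equivalently, for each such $y$ the function $\rho(x)=|x|^{2}$ restricted to $V_{y}$ has no critical point in $D(0,\epsilon_{0})^{*}$. Granting this, both assertions will follow from standard flow and Ehresmann fibration arguments applied to the total space $\wt{V}$, as I explain in the third paragraph. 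Note that it suffices to work over $\overline{U}$ since $U$ is relatively compact in $\Omega$, so $\overline{U}$ is compact.

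I would prove the uniform statement by the same compactness-and-resolution scheme used for Lemma~\ref{koritu}, which I expect to be the main obstacle. Suppose it fails; then one gets sequences $y_{i}\in\overline{U}$ and $x_{i}\in V_{y_{i}}$ with $0<|x_{i}|\to 0$ and $x_{i}$ a critical point of $\rho|_{V_{y_{i}}}$. Using the proper toric modification $\pi\colon X_{\Sigma}\to\C^{n}$ of Lemma~\ref{koritu} (an isomorphism over $\C^{n}\setminus\{0\}$, with $\pi^{-1}(0)$ a normal crossing divisor $D$), regard $x_{i}$ as a point of $X_{\Sigma}$; passing to a subsequence, $y_{i}\to y_{\infty}\in\overline{U}\subset\Omega$ and $x_{i}\to x_{\infty}\in D$. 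Since $\sigma_{y_{\infty}}f$ is non-degenerate at $0$ and $f$ is convenient, the strict transform of $V_{y_{\infty}}$ meets $D$ and each of its strata transversally near $x_{\infty}$, and by openness the same holds for all $y$ near $y_{\infty}$. In local monomial coordinates on $X_{\Sigma}$ around $x_{\infty}$ the divisor $D$ is a union of coordinate hyperplanes and $\rho\circ\pi$ equals a monomial in the corresponding coordinates times a strictly positive real-analytic function; a direct computation then shows that on a hypersurface transverse to all those coordinate hyperplanes the function $\rho\circ\pi$ has no critical point at points off $D$ in a neighborhood of $x_{\infty}$, contradicting the existence of the $x_{i}$. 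This monomial-coordinate estimate, made uniform in $y$ by the transversality being open, is exactly the resolution-theoretic mechanism underlying the classical cone theorem, now carried out with the extra parameter $y$.

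Granting the uniform Milnor radius, I would conclude as follows. First, $\wt{V}$ is smooth away from $\{x=0\}$: by convenience the vertex of $\Gamma_{f}$ on each coordinate axis lies in $P\cap\Z^{n}$, so some $\partial\wt{f}/\partial y_{\alpha}=x^{\alpha}$ (with $\alpha\neq 0$) is nonzero whenever $x\neq 0$; moreover $d_{x}\wt{f}(x,y)=d_{x}(\sigma_{y}f)(x)\neq 0$ on $(D(0,\epsilon_{0})\setminus\{0\})\times\overline{U}$ by Lemma~\ref{koritu}, so the projection $\wt{V}\cap((D(0,\epsilon_{0})\setminus\{0\})\times\Omega)\to\Omega$ is a submersion with smooth total space. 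Fix $\epsilon\le\epsilon_{0}$. Transversality of $\partial D(0,\epsilon)$ with every $V_{y}$ then makes $\wt{V}\cap(\partial D(0,\epsilon)\times\overline{U})$ a smooth manifold, proper over $\overline{U}$ and submersive onto it, hence a locally trivial fibration over $\overline{U}$ by Ehresmann's theorem; restrict it to $U$. The same transversality over all radii $0<\epsilon'\le\epsilon_{0}$ makes $(x,y)\mapsto(|x|^{2},y)$ a proper submersion on $\wt{V}\cap((D(0,\epsilon_{0})\setminus\{0\})\times\overline{U})$, so the inclusion $\wt{V}\cap(\partial D(0,\epsilon)^{*}\times U)\hookrightarrow\wt{V}\cap(B(0,\epsilon)^{*}\times U)$ is a homotopy equivalence: explicitly, patch together with a partition of unity on $U$ a smooth vector field on $\wt{V}\cap(B(0,\epsilon)^{*}\times U)$ that is tangent to $\wt{V}$, tangent to the fibers $\{y=\mathrm{const}\}$, and strictly increases $\rho$ (possible since $\rho$ has no critical point on any slice), and use its reparametrized flow as a deformation retraction; this is the first assertion. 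Finally, if $U$ is contractible the fibration of the previous step is trivial, so $\wt{V}\cap(\partial D(0,\epsilon)^{*}\times U)\cong(V_{y}\cap\partial D(0,\epsilon))\times U$, which is homotopy equivalent to $(V_{y}\cap\partial D(0,\epsilon))\times\{y\}$; composing with the first assertion finishes the proof.
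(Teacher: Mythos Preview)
Your proposal is correct and carries out essentially the scheme the paper indicates: the paper's own proof is a single sentence, namely that Lemma~\ref{koritu} lets one repeat Milnor's argument from \cite[Theorem~2.10]{MIL} verbatim with the parameter $y$ present, and you have written out what that entails (uniform Milnor radius over $\overline{U}$, then the radial flow and Ehresmann arguments). The one place where you deviate from Milnor is in how you obtain the uniform transversality of small spheres with $V_{y}$: Milnor's original proof uses the curve selection lemma on the critical set of $\rho|_{V}$, whereas you pull back to the toric modification $X_{\Sigma}$ of Lemma~\ref{koritu} and argue with the local monomial form of $\rho\circ\pi$ along the exceptional divisor. Both routes are legitimate; yours has the advantage that uniformity in $y$ is immediate from openness of transversality in the resolved picture, while a direct parametrized curve-selection argument would require extra care since the tangent vector $p'(t)$ along a curve $(p(t),q(t))$ need not lie in $T_{p(t)}V_{q(t)}$. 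One small caution: your claim that $\rho\circ\pi$ is a single monomial in the exceptional coordinates times a strictly positive function is a bit stronger than what is literally true in every chart (it is a sum of such monomials with positive coefficients), but what you actually need---no critical points of $\rho$ on the strict transform near $D$---still follows from that weaker description together with the transversality, so the argument goes through after a mild rephrasing.
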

\begin{proof}
By using Lemma~\ref{koritu}, we can apply the same argument in the proof of \cite[Corollary~2.10]{MIL}.
\end{proof}

Next, we consider the intersection cohomology complex $\IC_{\widetilde{V}}$ of $\widetilde{V}$.
Note that $\IC_{\widetilde{V}}$ is the underlying constructible sheaf of the mixed Hodge module $\IC_{\widetilde{V}}^{H}$.
\begin{prop}\label{teisuusou}
The restriction of $\IC_{\wt{V}}$ to $\{0\}\times \Omega$ is cohomologically locally constant,
that is, for any $k\in\Z$ the cohomology group $H^{k}(\IC_{\wt{V}}|_{\{0\}\times \Omega})$ is a locally constant sheaf on $\Omega$.
\end{prop}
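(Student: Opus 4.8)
The plan is to deduce the statement from the topological triviality of the family $\wt{f}$ near $\{0\}\times\Omega$, combined with the topological invariance of the intersection complex.

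Since being cohomologically locally constant is a local condition on $\Omega$, I would fix a point $y_{*}\in\Omega$ together with a compact neighborhood $K\subset\Omega$ of $y_{*}$, and use Lemma~\ref{koritu} to choose $\epsilon>0$ so that, for every $y\in K$, the origin is the only singular point of $V_{y}$ in $B(0,\epsilon)$. Two elementary facts about $\wt{V}$ enter here: first, $\{0\}\times\C^{l}\subset\wt{V}$ (because $f(0)=0$ and $0\notin\Gamma_{+}(f)$, so $0\notin P$); second, $\mathrm{Sing}(\wt{V})\subset\{0\}\times\C^{l}$, since $\partial\wt{f}/\partial y_{\alpha}=x^{\alpha}$ and, $f$ being convenient, $P$ contains a lattice point on each coordinate axis, which forces $x=0$ at any point of $\mathrm{Sing}(\wt{V})$. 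If $f$ has a linear term, then $\wt{V}$ is smooth along $\{0\}\times\Omega$ and the assertion is immediate; so assume $f$ has no linear term. Then on $W:=B(0,\epsilon)\times\INT{K}$ we have $\mathrm{Sing}(\wt{V})\cap W=\{0\}\times\INT{K}$, so $\wt{V}\cap W$ carries the two-stratum Whitney stratification $\{\,\wt{V}_{\REG}\cap W,\ \{0\}\times\INT{K}\,\}$. I would also check that the projection $\pi\colon\wt{V}\cap W\to\INT{K}$ restricts to a submersion on each stratum: on the closed stratum it is a diffeomorphism, and on $\wt{V}_{\REG}\cap W$, at any point $(x,y)$ with $x\neq0$ the point $x$ is a smooth point of $V_{y}$, so $\partial_{x}\wt{f}(x,y)\neq0$ and hence $\ker d\wt{f}(x,y)$ surjects onto the $\C^{l}$-factor.

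The crux is that $\pi$ is topologically locally trivial over $\INT{K}$, compatibly with this stratification: after possibly shrinking $K$ around $y_{*}$, there is a stratum-preserving homeomorphism $h\colon\wt{V}\cap W\cong\bigl(V_{y_{*}}\cap B(0,\epsilon)\bigr)\times\INT{K}$ over $\INT{K}$ which carries $\{0\}\times\INT{K}$ to $\{0\}\times\INT{K}$. This is precisely the parametrized cone theorem: although Lemma~\ref{cone} is phrased there as a homotopy equivalence, its proof follows Milnor's construction of the local conic structure by integrating a (stratified) vector field, after truncating $\wt{V}$ at a sphere $\partial D(0,\epsilon')$ transverse to all the fibres $V_{y}$ with $y$ in a compact set and applying Thom's first isotopy lemma to the resulting proper stratified submersion; this construction in fact produces such a homeomorphism. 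I expect this step, namely upgrading Lemma~\ref{cone} to a genuine stratum-preserving topological trivialization with parameters, to be the main obstacle; it rests on the uniform transversality of small spheres to all the fibres over a compact parameter set and on the Whitney regularity of the stratification $\{\wt{V}_{\REG},\{0\}\times\Omega\}$ of $\wt{V}$, which is where the hypothesis $y\in\Omega$ (non-degeneracy at $0$ together with $\Gamma_{\sigma_{y}f}=\Gamma_{f}$) gets used.

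Granting this, I would conclude as follows. Since $\wt{V}\cap W$ is open in $\wt{V}$, we have $\IC_{\wt{V}}|_{\wt{V}\cap W}=\IC_{\wt{V}\cap W}$, and by the topological invariance of the intersection complex (Goresky--MacPherson) this corresponds, under $h$, to $\IC_{(V_{y_{*}}\cap B(0,\epsilon))\times\INT{K}}$. As $\INT{K}$ is a smooth complex manifold of dimension $l$, the K\"{u}nneth formula for intersection complexes gives $\IC_{(V_{y_{*}}\cap B(0,\epsilon))\times\INT{K}}\simeq\IC_{V_{y_{*}}\cap B(0,\epsilon)}\boxtimes\Q_{\INT{K}}[l]$. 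Restricting to the closed stratum $\{0\}\times\INT{K}$ yields $(\IC_{V_{y_{*}}})_{0}\otimes\Q_{\INT{K}}[l]$, a constant complex of sheaves on $\INT{K}$; in particular every cohomology sheaf of $\IC_{\wt{V}}|_{\{0\}\times\INT{K}}$ is locally constant. Since $y_{*}\in\Omega$ was arbitrary, $\IC_{\wt{V}}|_{\{0\}\times\Omega}$ is cohomologically locally constant, which is the assertion.
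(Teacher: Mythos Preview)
Your argument is correct, but it proceeds along a genuinely different route from the paper's. The paper exploits the explicit description $\IC_{\wt{V}}=\tau^{\leq -1}\DER{j}_{*}\Q_{\wt{V}_{\REG}}[n+l-1]$ directly: it suffices that the cohomology sheaves of $(\DER{j}_{*}\Q_{\wt{V}_{\REG}})|_{\{0\}\times\Omega}$ be locally constant, and for this the paper shows that for every closed ball $K\subset\Omega$ and every $y\in K$ the natural map
\[
H^{k}(\{0\}\times K;\,\DER{j}_{*}\Q_{\wt{V}_{\REG}}|_{\{0\}\times K})\longrightarrow H^{k}\bigl((\DER{j}_{*}\Q_{\wt{V}_{\REG}})_{(0,y)}\bigr)
\]
is an isomorphism. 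Both sides are computed, via Lemma~\ref{cone}, as $H^{k}$ of the link $V_{y}\cap\partial D(0,\epsilon)$, and the restriction corresponds to the identity. Thus the paper needs only the \emph{homotopy equivalence} supplied by Lemma~\ref{cone}. You instead upgrade that lemma to a stratum-preserving \emph{homeomorphism} via Thom's first isotopy lemma, and then invoke the topological invariance of $\IC$ together with the K\"unneth formula for a product with a smooth factor. Your approach is more conceptual and yields a clean local product description of $\IC_{\wt{V}}$ near $\{0\}\times\Omega$, but it costs more: you must verify Whitney regularity of the two-stratum partition (equivalently, some equisingularity of the family over $\Omega$), arrange properness by truncating at a uniformly transverse sphere, and appeal to the Goresky--MacPherson invariance and product theorems. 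The paper's route is more economical, trading these black boxes for a direct cohomological computation that uses only the homotopy type of the punctured neighborhoods.
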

\begin{proof}
The smooth part of ${\widetilde{V}}$ is ${\widetilde{V}}_{\REG}={\widetilde{V}}\cap((\C^n\setminus\{0\})\times \Omega)$.
We set $j:={\widetilde{V}}_{\REG}\hookrightarrow{\widetilde{V}}$.
Since $\IC_{\widetilde{V}}=\tau^{\leq -1}(\DER{j}_{*}{\Q_{\widetilde{V}_{\REG}}[n+l-1]})$, 
it is sufficient to show that each cohomology sheaf of the restriction of $\DER{j}_{*}{\Q_{\widetilde{V}_{\REG}}}$ to $\{0\}\times \Omega$ is locally constant sheaf.

Take any closed ball in $K\subset \Omega$ and a point $y\in K$.
For any integer $k\in\Z$, we have
\begin{align*}
H^{k}(\wt{V}\cap(\{0\}\times K);\DER{j}_{*}{\Q_{\wt{V}_{\REG}}})\simeq
\varinjlim_{\substack{0<\epsilon \ll 1\\ K\subset U}}{H^{k}(\wt{V}\cap(B(0,\epsilon)^{*}\times U);\Q)},
\end{align*} 
where $U$ ranges through the family of open contractible neighborhoods of $K$ in $\Omega$.
By Lemma~\ref{cone}, $\wt{V}\cap (B(0,\epsilon)^{*}\times U)$ is homotopic to
$(V_{y}\cap \partial D(0,\epsilon))\times \{y\}$.
We thus obtain
\[H^{k}(\wt{V}\cap(\{0\}\times K);\DER{j}_{*}{\Q_{\wt{V}_{\REG}}})\simeq
H^{k}((V_{y}\cap \partial D(0,\epsilon))\times \{y\};\Q),\]
for a sufficiently small $\epsilon>0$.
On the other hand, the stalk of $H^{k}(\DER{j}_{*}{\Q_{\widetilde{V}_{\REG}}})$ at $(0,y)$ is isomorphic to
\[\varinjlim_{\substack{0<\epsilon\ll1\\y\in U}}{H^{k}(\wt{V}\cap(B(0,\epsilon)^{*}\times U);\Q)},\]
where $U$ ranges through the family of open contractible neighborhoods in $\Omega$ of $y$.
By Lemma~\ref{cone}, $\wt{V}\cap(B(0,\epsilon)^{*}\times U)$ is homotopic to $(V_{y}\cap\partial D(0,\epsilon))\times \{y\}$.
Hence
$H^{k}((\DER{j}_{*}{\Q_{\widetilde{V}_{\REG}}})_{(0,y)})$ is isomorphic to $H^{k}((V_{y}\cap\partial D(0,\epsilon))\times \{y\};\Q)$,
for a sufficiently small $\epsilon>0$.
Eventually, the morphism induced by the inclusion map from $(0,y)$ into $\wt{V}\cap(\{0\}\times K)$,
\[H^{k}(\wt{V}\cap(\{0\}\times K);\DER{j}_{*}{\Q_{\wt{V}_{\REG}}}|_{\wt{V}\cap(\{0\}\times K)})
\to
H^{k}((0,y);\DER{j}_{*}{\Q_{\wt{V}_{\REG}}}|_{\wt{V}\cap(\{0\}\times K)}),\]
is isomorphic.
This shows that $\DER{j}_{*}{\Q_{\wt{V}_{\REG}}}|_{\wt{V}\cap(\{0\}\times \Omega)}$ is a cohomologically locally constant sheaf.
\end{proof}

Finally, we shall prove Proposition~\ref{subetedoukei}, by using Lemma~\ref{teisuusou}.

\begin{proof}[Proof of Proposition~\ref{subetedoukei}]
We consider the restriction of the complex of mixed Hodge modules $\IC_{\wt{V}}^H|_{\{0\}\times \Omega}\in \DCB\MHM(\Omega)$ of $\IC_{\wt{V}}^H$,
and its cohomology groups of $\IC_{\wt{V}}^H|_{\{0\}\times \Omega}$.
The underlying perverse sheaf of these mixed Hodge modules on $\Omega$
is each perverse cohomology of $\IC_{\wt{V}}|_{\{0\}\times \Omega}$.
However in this situation,
the $k$-th perverse cohomology is the $(k-l)$-th usual cohomology, which is a locally constant sheaf by Lemma~\ref{teisuusou}.  

Fix a degree $k\in\Z$.
We denote by $(\mathcal{M},F^{\bullet},K,W_{\bullet})$ the mixed Hodge module $H^{k}(\IC_{\wt{V}}^H|_{\{0\}\times \Omega})$ on $\Omega$.
Here $\mathcal{M}$ is a regular holonomic $D_{\Omega}$-module on $\Omega$, $F^{\bullet}$ is a good filtration of $\mathcal{M}$, $K$ is the underlying perverse sheaf of $\mathcal{M}$ and
$W_{\bullet}$ is a finite increasing filtration of $(\mathcal{M},F^{\bullet},K)$ in $\mathrm{MF}_{\mathrm{rh}}(D_{\Omega},\Q)$.
From the above argument, $K$ is a shifted locally constant sheaf.
Thus, $\mathcal{M}$ associated to $K$ is an integrable connection.  
Hence all $D_{\Omega}$-submodules of $\mathcal{M}$ in the filtration $W_{\bullet}$ are coherent $\OR_{\Omega}$-modules,
and hence integrable connections (see \cite{HTT}, etc.).
Therefore, $(\mathcal{M},F^{\bullet},K,W_{\bullet})$ is a mixed Hodge module associated to a variation of mixed Hodge structures on the connected open subset $\Omega\subset \C^l$.
Since the mixed Hodge numbers at all points in $\Omega$ of this variation of mixed Hodge structures are the same,
this completes the proof. 
\end{proof}

\bibliography{myref}

\end{document}